\newtheorem{theorem}{Theorem}[section]
\newtheorem{corollary}[theorem]{Corollary}
\newtheorem{question}[theorem]{Question}
\newtheorem{proposition}[theorem]{Proposition}
\newtheorem{lemma}[theorem]{Lemma}
\newenvironment{customthm}[1]
  {\innercustomthm}
  {\endinnercustomthm}
\theoremstyle{definition}
\theoremstyle{definition}
\newtheorem{remark}[theorem]{Remark}
\newcommand\cF{\mathcal{F}}
\newcommand\cP{\mathcal{P}}
\newcommand\ZZ{\mathbb{Z}}
\newcommand\RR{\mathbb{R}}
\newcommand\del{\partial}
\newcommand\tb{\text{tb}}
\newcommand\tw{\text{tw}}
\newcommand\rot{\text{rot}}
\begin{document}

\title[{Planar Legendrian graphs}]{Planar Legendrian graphs}

\dedicatory{The first author dedicates this work to Joan Lambert}

\author[P. Lambert-Cole]{Peter Lambert-Cole}
\address{Department of Mathematics \\ Indiana University}
\email{pblamber@indiana.edu}
\urladdr{\href{http://pages.iu.edu/~pblamber/}{http://pages.iu.edu/~pblamber/}}

\author[D. O'Donnol]{Danielle O'Donnol$^{\dag}$ }
\address{Department of Mathematics \\ Indiana University}
\email{odonnol@indiana.edu}
\urladdr{\href{http://pages.iu.edu/~odonnol/}{http://pages.iu.edu/~odonnol/}}

\thanks{$^{\dag}$ This work was partially supported by the National Science Foundation grant DMS-160036.}

\keywords{Contact Topology, Legendrian graphs, Legendrian simple, Convex surface theory}
\subjclass[2010]{53D10; 57M15; 05C10}
\maketitle


\begin{abstract}

We prove two results on the classification of trivial Legendrian embeddings $g: G \rightarrow (S^3,\xi_{std})$ of planar graphs.  First, the oriented Legendrian ribbon $R_g$ and rotation invariant $\rot (g)$ are a complete set of invariants.  Second, if $G$ is 3-connected or contains $K_4$ as a minor, then the unique trivial embedding of $G$ is Legendrian simple.

\end{abstract}

\section{Introduction} 
\label{sec:introduction}

Several topological knot types - the unknot, the figure-8 knot, torus knots \cite{Eliashberg_Fraser,Etnyre_Honda,Ding-Geiges}- are known to be {\it Legendrian simple}.  That is, any Legendrian knot $L$ realizing one of these knot types is determined up to Legendrian isotopy by two classical, homotopy-theoretic invariants of Legendrian knots in $S^3$: the Thurston-Bennequin number $\tb(L)$ and the rotation number $\rot(L)$.  For most other topological types - most famously the $5_2$-knot \cite{Chekanov}- this is not true: there exist Legendrian knots that are smoothly isotopic, have the same Thurston-Bennequin and rotation numbers, but are not isotopic as Legendrian knots.

The classical invariants of Legendrian knots generalize to Legendrian graphs.  A natural question is to determine which spatial graphs are Legendrian simple.  A {\it spatial graph} is an embedding of a fixed abstract graph into a $3$-manifold $M$.  Two spatial embeddings $j_1,j_2$ of an abstract graph are {\it isotopic} if there exists an ambient isotopy $f$ of $M$ such that $f \circ j_1 = j_2$.  A spatial graph is {\it topologically trivial} or {\it unknotted} if its image is embedded on a smoothly embedded 2-sphere.  An abstract graph admits an unknotted embedding in $M$ if and only if it is planar and Mason \cite{Mason} proved that this embedding is unique up to ambient isotopy.  A {\it Legendrian graph} is an embedding $g$ of an abstract graph into a contact 3-manifold $(M,\xi)$ such that the image is tangent to the contact structure at each point.  The image of each cycle is a piecewise-smooth Legendrian knot.  Two Legendrian embeddings $g_1,g_2$ of an abstract graph are {\it Legendrian isotopic} if there exists a contact isotopy $h$ of $(M,\xi)$ such that $h \circ g_1 = g_2$. 

Let $g: G \rightarrow (M,\xi)$ be a Legendrian embedding of an abstract graph.  The {\it Legendrian ribbon} $R_g$ of the Legendrian embedding $g$ is a compact, oriented surface with boundary that is unique up to isotopy rel $g(G)$ and invariant under Legendrian isotopy.  The {\it contact framing} $\overline{R}_g$ is the underlying unoriented surface of the Legendrian ribbon and generalizes the contact or Thurston-Bennequin framing of a Legendrian knot.    The surface $\overline{R}_g$ encodes the contact framing of every cycle and therefore the Thurston-Bennequin numbers of all nullhomologous cycles.  For Legendrian graphs in $S^3$, the rotation and Thurston-Bennequin numbers of each cycle are well-defined and determine invariants $\tb_g,\rot_g \in \ZZ^{|C(G)|}$.  Generalizing from knots, we say that an isotopy class of embeddings of an abstract graph $G$ is {\it Legendrian simple} if the pair $(\overline{R}_g,\rot_g)$ is a complete set of invariants of Legendrian embeddings in this isotopy class, up to Legendrian isotopy.  
In this article we focus on topologically trivial embeddings of planar graphs.  
A planar graph $G$ has a unique trivial embedding up to ambient isotopy.  
When we say the abstract graph $G$ is Legendrian simple this means its unique trivial isotopy class is Legendrian simple.

Some planar graphs are known to be Legendrian nonsimple.  Pavelescu and the second author \cite{OP-Theta} showed that the $\Theta$-graph was not Legendrian simple.  They found two Legendrian embeddings of the $\Theta$-graph with the same contact framing and rotation invariant but that were distinguished by their Legendrian ribbons.  

\begin{remark}
In \cite{OP-Theta}, Legendrian simplicity is defined in terms of the pair $(\tb_g,\rot_g)$.  However, the contact framing $\overline{R}_g$ is the more natural generalization of the Thurston-Bennequin invariant for Legendrian graphs in arbitrary contact manifolds.  Furthermore, their examples remain counter-examples to the present definition of Legendrian simplicity.
\end{remark}

In this paper, we prove two results on the simplicity of planar Legendrian graphs.  First, we give two sufficient conditions for a planar graph to be Legendrian simple.  Let $\Delta_2$ be the graph on 3 vertices with 2 edges connecting each pair of vertices.

\begin{theorem}
\label{thrm:simple}
Let $G$ be an abstract planar graph and $g: G \rightarrow S^3$ its unique topologically trivial embedding.
\begin{enumerate}
\item If $G$ contains $K_4$ or $\Delta_2$ as a minor, then the pair $(\overline{R}_g,\rot_g)$ is a complete set of invariants.
\item If $G$ is 3-connected, the pair $(\tb_g,\rot_g)$ is a complete set of invariants.
\end{enumerate}
In both cases, $G$ is Legendrian simple.
\end{theorem}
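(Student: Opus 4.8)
The plan is to bootstrap from the companion classification result already announced, namely that for the trivial embedding the oriented Legendrian ribbon $R_g$ together with $\rot_g$ form a complete set of invariants. Granting it, part (1) reduces to recovering the \emph{orientation} of the ribbon from $(\overline{R}_g,\rot_g)$, and part (2) reduces further to recovering the framing surface $\overline{R}_g$ itself from $\tb_g$.

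For (1), suppose $g_1,g_2$ are trivial Legendrian embeddings of $G$ with $\overline{R}_{g_1}=\overline{R}_{g_2}$ and $\rot_{g_1}=\rot_{g_2}$. Since $\overline{R}_g$ is precisely $R_g$ with its orientation forgotten, $R_{g_1}$ equals either $R_{g_2}$---in which case the classification result gives $g_1\sim g_2$---or its orientation reversal $-R_{g_2}$. Writing $\bar{g}$ for the embedding obtained by reversing the ribbon orientation (the mirror of $g$ across the sphere $S^2$ carrying the trivial embedding), the entire content of (1) is to show that under the minor hypothesis $R_{\bar{g}}\neq R_g$ forces $\rot_{\bar{g}}\neq\rot_g$; that is, the rotation invariant must detect the orientation flip. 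This is exactly the phenomenon that fails for the $\Theta$-graph, whose two mirror ribbons carry identical rotation data, so the argument must isolate the feature a $K_4$ or $\Delta_2$ minor supplies and the $\Theta$-graph lacks.

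The heart is therefore a combinatorial computation of the mirror's action on $\rot_g$. The oriented ribbon is equivalent to a planar rotation system on $G$ together with integer framing data along the edges; reversing orientation reverses every vertex rotation and, I expect, sends $\rot_g$ to $-\rot_g$ after the reorientation $\sigma$ of the cycle basis that it induces. I would express $\rot_g$ through the oriented ribbon by a relative Euler class formula, compute the discrepancy $\rot_g-\rot_{\bar{g}}$ as a sum of local contributions, and then show that it cannot vanish across a $K_4$ or $\Delta_2$ minor: the minor forces a cycle along which reversing the rotation system changes the Maslov count. When $G$ is $3$-connected this is cleanest, since by Whitney's theorem its planar embedding---hence its rotation system---is unique up to the single global reflection, so the mirror is the only ambiguity to control.

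To upgrade (1) to (2), note that every $3$-connected graph contains $K_4$ as a minor, so (1) already yields completeness of $(\overline{R}_g,\rot_g)$ for $3$-connected $G$; it remains to recover $\overline{R}_g$ from $\tb_g$. With the rotation system fixed by Whitney uniqueness, the only freedom left in $\overline{R}_g$ is the framing data along edges modulo the moves available at the vertices, and the assignment of Thurston--Bennequin numbers to cycles is the corresponding linear map; I would show that $3$-connectivity makes this map injective, so that $\tb_g$ determines $\overline{R}_g$ and then $\rot_g$ determines $R_g$ by (1). The main obstacle I anticipate is the combinatorial core of the third paragraph: pinning down the exact effect of orientation reversal on $\rot_g$ and proving the discrepancy is nonzero over a $K_4$ or $\Delta_2$ minor. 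This is where the hypothesis is essential, since the series--parallel graphs possessing neither minor---with the $\Theta$-graph as prototype---are precisely those for which the mirror can preserve $\rot_g$ while reversing $R_g$, so the proof must convert the minor into a concrete witness cycle on which the rotation count is mirror-sensitive.
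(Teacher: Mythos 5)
Your reduction matches the paper's at the level of architecture: both bootstrap from Theorem \ref{thrm:complete}, part (1) amounts to showing that $\rot_g$ detects the ribbon orientation under the minor hypothesis, and part (2) recovers $\overline{R}_g$ from $\tb_g$ and then reduces to part (1) via the fact that a $3$-connected graph contains $K_4$ as a minor. Your part (2) plan is essentially what the paper does: with the planar embedding pinned by Whitney's theorem, it recovers each edge framing by the explicit formula $\tw(e,\Sigma_g)=\frac{1}{2}\left(\tb(C_1)+\tb(C_2)-\tb(C_3)\right)$, where $C_1,C_2,C_3$ are the cycles formed from $e$ and two vertex-independent paths guaranteed by $3$-connectivity --- your ``injectivity of the linear map'' in concrete form. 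But the heart of part (1), which you yourself flag as the main obstacle, is left as a heuristic, and the heuristic as stated would fail. If, as you expect, the mirror sends $\rot_g$ to $-\rot_g$ (after reorienting cycles), then any embedding whose rotation numbers vanish on the relevant cycles would be mirror-insensitive, and nothing in a relative-Euler-class expansion into local contributions rules this out by itself; a single ``witness cycle'' cannot be produced abstractly. This is exactly the regime of the $\Theta$-graph counterexample of \cite{OP-Theta}.

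The missing idea is the \emph{total rotation number over a fundamental set of theta cycles}, together with a sign pigeonhole. The paper picks three vertices surviving into the $K_4$ or $\Delta_2$ minor; by pigeonhole two of them, $v_1,v_2$, have equal tangency sign $\sigma(v_1,\Sigma_g)=\sigma(v_2,\Sigma_g)$, and the minor supplies three vertex-independent paths $p_1,p_2,p_3$ between them. Lemma \ref{lemma:total-rot-theta} (Lemma 5 of \cite{OP-Theta}) states that the sum of the three cycle rotation numbers of this theta is $1$ or $0$ according to whether the cyclic orderings of the paths at the two endpoints are opposite or agree; equal vertex signs force opposite orderings, so the total rotation is exactly $1$ --- in particular \emph{nonzero}, which is the asymmetry your sketch lacks. (It originates in the $-\sigma(v,\Sigma)$ correction of Lemma \ref{lemma:del-connect-sum}, invisible to a naive $\rot\mapsto-\rot$ count over individual cycles.) Reversing the ribbon flips both vertex signs and hence both cyclic orderings, so the total rotation over the same oriented cycles becomes $-1\neq 1$, contradicting $\rot_h=\rot_g$. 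The $\Theta$-graph escapes precisely because with only two vertices the pigeonhole step fails: the signs may be opposite, the total rotation is $0$, and the mirror is undetectable. Without Lemma \ref{lemma:total-rot-theta} or a proof of its $\{0,1\}$ dichotomy, the discrepancy computation you propose does not close.
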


Secondly, we show that the only obstruction to Legendrian simplicity is the orientation on the Legendrian ribbon $R_g$.

\begin{theorem}
\label{thrm:complete}
Let $G$ be an abstract planar graph.  The pair $(R_g,\rot_g)$ is a complete set of invariants of topologically trivial Legendrian embeddings $g: G \rightarrow S^3$.
\end{theorem}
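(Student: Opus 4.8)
The plan is to show that two topologically trivial Legendrian embeddings $g_1, g_2 \colon G \to (S^3,\xi_{std})$ whose oriented ribbons $R_{g_1}, R_{g_2}$ are isotopic and whose rotation invariants agree, $\rot_{g_1} = \rot_{g_2}$, are Legendrian isotopic. The argument splits into a local step near the graph and a global step over the complement, and the principal tool throughout is convex surface theory. For the local step I would use that the oriented Legendrian ribbon determines the germ of the contact structure along $g(G)$: the ribbon is a convex surface carrying $g(G)$ as its Legendrian skeleton, and an oriented convex surface determines the contact germ on a neighborhood. Thus an isotopy $R_{g_1} \simeq R_{g_2}$ of oriented ribbons upgrades to a contactomorphism $\phi$ from a standard neighborhood $N_1 = N(g_1(G))$ to $N_2 = N(g_2(G))$ carrying $g_1(G)$ to $g_2(G)$ and matching the convex boundaries $\partial N_1, \partial N_2$ together with their dividing sets.

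The heart of the proof is to extend $\phi$ across the complement. First I would record the topology: because $g$ is topologically trivial, $g(G)$ lies on an embedded sphere, and Mason's uniqueness \cite{Mason} of the trivial embedding identifies the complements $M_i = S^3 \setminus \mathrm{int}\,N_i$ with a single handlebody $M$ of genus $b_1(G)$. Concretely, $g(G)$ splits the separating sphere into the $F = b_1(G)+1$ faces of the planar graph, and $M$ is obtained by gluing the two complementary balls along these face disks; pushing the $F-1$ bounded faces off the sphere gives a complete system of compressing disks $D_1, \ldots, D_{b_1(G)}$ whose boundaries run parallel to the cycles $c_1, \ldots, c_{b_1(G)}$ bounding those faces. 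These $D_i$ are exactly the spanning disks over which the rotation numbers are computed.

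I would then make each $D_i$ convex with Legendrian boundary and read off its dividing set. On such a disk the number of dividing arcs is governed by the contact framing $\tb(c_i)$, which the unoriented ribbon already determines, while the orientation of the ribbon together with $\rot(c_i)$ fixes the signs of the complementary regions, hence the oriented dividing set of $D_i$. Since the oriented ribbons and the rotation invariants of $g_1$ and $g_2$ agree, the convex disks $D_i$ for the two embeddings are identified. Cutting $M$ along the $D_i$ reduces it to a ball with a standard, connected dividing set, and Eliashberg's uniqueness of the tight contact structure on $B^3$ \cite{Eliashberg_Fraser} shows the contact structure on $M$ is determined rel $\partial M$. Therefore $\phi$ extends to a global contact isotopy of $S^3$ carrying $g_1$ to $g_2$.

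The main obstacle I anticipate is controlling the convex decomposition so that there is no residual ambiguity. Concretely, I must check that the dividing set on each spanning disk $D_i$ is genuinely forced --- that no bypass choice survives --- that the disks $D_i$ can be realized simultaneously as disjoint convex surfaces, and that after cutting the resulting ball carries a connected dividing set so that Eliashberg's theorem applies. This is also the precise point at which the orientation of the ribbon, as opposed to the bare contact framing $\overline{R}_g$, enters: the co-orientation supplied by the ribbon is what pins down the $R_\pm$ decomposition on $\partial N$ and propagates to a consistent sign assignment on the $D_i$. For graphs such as $\Theta$ the unoriented framing leaves this sign choice ambiguous, producing the known inequivalent embeddings, whereas the oriented ribbon removes the ambiguity --- and because the ribbon does not itself detect the rotation numbers (these live on spanning disks in the complement), the invariant $\rot_g$ is exactly the additional data needed to complete the determination.
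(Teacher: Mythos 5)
Your topological reduction is sound, and the local step (the oriented ribbon determines the contact germ along the graph, hence a contactomorphism of standard neighborhoods) matches the paper's use of convex surface machinery. But the pivotal step of your global argument --- that the oriented dividing set on each convex spanning disk $D_i$ is ``genuinely forced'' by $\tb(c_i)$ and $\rot(c_i)$ --- is false, and you flag it only as an anticipated obstacle without supplying a mechanism to overcome it. If $\tb(\partial D) = -n$, the dividing set of a convex spanning disk is a crossingless matching of $2n$ boundary points, of which there are Catalan-many, and the single number $\rot = \chi(D_+) - \chi(D_-)$ fails to distinguish them once $n \geq 3$; worse, the endpoints are pinned on $\partial N$ relative to the graph, so configurations must agree rel boundary, not just abstractly. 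Different convex representatives of the same disk differ by bypasses, so after you make both systems of disks convex you are left with dividing sets that genuinely disagree, and Eliashberg's uniqueness theorem on the cut-open ball cannot be invoked until they are matched. This is not a technical loose end: it is the entire content of the theorem, and your proof stops exactly where the real work begins.

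The paper fills precisely this hole by \emph{changing} the dividing sets rather than computing them: one attaches sequences of bypasses to the two auxiliary surfaces in the complement of the graphs (so the embeddings themselves are untouched) until the dividing sets agree face by face (Proposition \ref{prop:complete}). The subtlety is that Right-to-Life (Lemma \ref{lemma:sphere-bypass}) guarantees a bypass along every candidate arc on a sphere, but one cannot choose the side --- front or back --- on which it attaches, and the two choices have different effects; hence the induction on $|\tb(\del F)|$ with the ``matched'' and ``adjacent'' bypass case analysis, and the termination argument that after $i$ unlucky rounds $\del F_1$ has absorbed $i$ positive and $\del F_2$ has absorbed $i$ negative destabilizations, contradicting $\rot(\del F_1) = \rot(\del F_2)$ once $2i > n-1$. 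That counting step is where the hypothesis $\rot_{g_1} = \rot_{g_2}$ is actually consumed --- in your sketch it is never used beyond ``identifying'' the disks. Two smaller omissions: when $G$ is not $3$-connected, matching the two marked spheres face by face requires the ribbon to rigidify the planar embedding (Lemma \ref{lemma:ribbon-embedding}), and upgrading the final contactomorphism to a contact \emph{isotopy} requires Eliashberg's weak homotopy equivalence result (Proposition \ref{prop:dividing-equals-isotopy}), which your closing sentence elides. I will note one genuine structural merit of your formulation: by working with compressing disks in the complement of $N(g(G))$, where Bennequin forces $\tb(\partial D_i) \leq -1$ and hence convexity, you partially sidestep the positively-twisted-edge problem that forces the paper's two-stage $\cP_0$/$\cP_k$ argument with handle attachments (Subsection \ref{sub:general}) --- but that economy does not repair the central gap.
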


An immediate corollary of Theorem \ref{thrm:complete} is

\begin{corollary}
\label{cor:at-most-2}
Let $G$ be an abstract planar graph.  Then there are at most 2 distinct Legendrian isotopy classes of topologically trivial Legendrian embeddings of $G$ with a fixed pair $(\overline{R},\rot)$ classical invariants.
\end{corollary}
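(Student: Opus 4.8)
The plan is to read Corollary \ref{cor:at-most-2} directly off Theorem \ref{thrm:complete}, viewing the coarser data $(\overline{R}_g,\rot_g)$ as nothing more than the complete invariant $(R_g,\rot_g)$ with the orientation of the Legendrian ribbon forgotten. First I would fix an isotopy class of unoriented contact framing $\overline{R}$ together with a rotation invariant $\rot$, and take an arbitrary topologically trivial Legendrian embedding $g\colon G\to S^3$ whose classical invariants are this prescribed pair. By definition the oriented ribbon $R_g$ has $\overline{R}$ as its underlying unoriented surface, so $R_g$ is obtained from the fixed surface $\overline{R}$ purely by a choice of orientation.

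Next I would invoke the topological fact that there are only two such choices in the relevant case. The Legendrian ribbon deformation retracts onto $g(G)$, so for connected $G$ it is a connected compact orientable surface, and such a surface admits exactly two orientations, $R$ and $-R$. Hence the oriented ribbon of \emph{any} embedding realizing $(\overline{R},\rot)$ must agree, up to isotopy rel $g(G)$, with one of these two oriented surfaces. Combining this with Theorem \ref{thrm:complete}, which guarantees that $(R_g,\rot_g)$ determines $g$ up to Legendrian isotopy, each of the two admissible orientations paired with the fixed $\rot$ accounts for at most one Legendrian isotopy class. Since distinct classes must have distinct values of the complete invariant and $\rot$ is held fixed, distinct classes are distinguished by their oriented ribbon alone; there are therefore at most two in total.

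The content of the argument, and the only step requiring any care, is the orientation count: I must confirm that forgetting the orientation of $R_g$ is genuinely a binary operation, which amounts to the ribbon being connected so that there are precisely two global orientations rather than $2^k$ for a ribbon with $k$ components. This is immediate for connected $G$ from the deformation retraction onto the connected graph $g(G)$. Note that no realizability statement is needed, since Corollary \ref{cor:at-most-2} asserts only an upper bound; I need only that every embedding with the prescribed $(\overline{R},\rot)$ has its oriented ribbon among the two orientations of the fixed surface $\overline{R}$, after which Theorem \ref{thrm:complete} does all the remaining work.
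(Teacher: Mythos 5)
Your proposal is correct and matches the paper's intent exactly: the paper offers no separate argument, stating the corollary as ``an immediate corollary'' of Theorem \ref{thrm:complete}, and the content you supply---that the oriented ribbon of any embedding realizing $(\overline{R},\rot)$ must be one of the two orientations of $\overline{R}$, each accounting for at most one class by completeness of $(R_g,\rot_g)$---is precisely the implicit orientation count. Your connectivity caveat is also consonant with the paper, which assumes $G$ connected at the start of Section \ref{sub:planar}.
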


In contrast to similar statements for Legendrian knots, we do not obtain these results as a consequence of classifying planar Legendrian graphs.  Instead, we use convex surface theory to directly construct a contact isotopy between a pair of Legendrian embeddings with the same rotation invariant and oriented Legendrian ribbon.

\subsection{Discussion}

For a fixed set of invariants of a topological or geometric object, the geography and botany problems are two standard questions.  The {\it geography problem} for classical invariants of Legendrian graphs asks: Given a fixed isotopy class of spatial graphs, which pairs $(\overline{R},\rot)$ are realized as the invariants of a Legendrian representative of the class?  The {\it botany question}, which refines the geography problem, asks: Given a fixed isotopy class of spatial graphs and fixed pair of invariants $(\overline{R},\rot)$, how many Legendrian representatives $g$ in the class have $\overline{R}_g = \overline{R}$ and $\rot_g = \rot$? 

This paper addresses the botany question.  Corollary \ref{cor:at-most-2} bounds but does not solve the botany question for planar Legendrian graphs.  Theorem \ref{thrm:simple} gives two classes of graphs for which the potential obstruction vanishes.  However, it does not give a complete characterization of those graphs which are nonsimple.  

\begin{question}
Characterize the planar graphs whose trivial embedding is not Legendrian simple.
\end{question}

Secondly, for the study of knotted graphs as opposed to the study of knots, there is a important and subtle distinction between embeddings $g: G \rightarrow M$ and their images.  Let $k: S^1 \rightarrow S^3$ be a knot and $\phi: S^1 \rightarrow S^1$ be an automorphism.  The maps $k$ and $k \circ \phi$ have the same image and since $\phi$ is isotopic to the identity, it is straightforward to construct an ambient isotopy $h_{\phi}$, supported in a tubular neighborhood of $k(S^1)$, such that $h_{\phi} \circ k = k \circ \phi$.  However, the analogous fact is not true for spatial graphs.  Given an abstract graph $G$, a spatial embedding $j: G \rightarrow S^3$, and an automorphism $\phi$ of $G$, it is not true in general that there exists an ambient isotopy $h_{\phi}$ satisfying $h_{\phi} \circ j = j \circ \phi$.  Hence in the classification of spatial graphs it is necessary to distinguish between embeddings of $G$ up to ambient isotopy and their images up to ambient isotopy.

\begin{question}
Characterize the difference between Legendrian simple and Legendrian simple up to reparametrization.
\end{question}

\subsection{Organization}

In Section \ref{sec:background}, we review background material on contact geometry, convex surface theory and the classical invariants of Legendrian graphs.  In Section \ref{sec:botany}, we prove Theorem \ref{thrm:complete} and in Section \ref{sec:simple} we prove Theorem \ref{thrm:simple}.

\subsection{Acknowledgements}
We would like to thank Jeff Meier and Kent Orr for the discussion that inspired this work.



\section{Background}
\label{sec:background}

\subsection{Contact geometry}

A (cooriented) {\it contact structure} $(M,\xi)$ on a 3-manifold $M$ is a plane field $\xi = \text{ker}(\alpha)$ where the 1-form $\alpha$ satisfies the nonintegrability condition $\alpha \wedge d \alpha > 0$.  As a result, the contact structure induces an orientation on $M$ and the 2-form $d \alpha$ orients the contact planes.  The basic example is the standard contact structure $\xi = \text{ker}(dz - y dx)$ on $\RR^3$.  This can be extended by the one-point compactification of $\RR^3$ to the standard structure on $S^3$.  An {\it overtwisted disk} is an embedded disk $D$ in a contact manifold $(M,\xi)$ with Legendrian boundary such that the contact planes are tangent to $D$ along its boundary.  A contact structure is {\it overtwisted} if it contains an overtwisted disk; if not, the contact structure is {\it tight}.  The standard contact structure on $S^3$ is tight.

\subsection{Convex surfaces}
\label{sub:convex}

Let $\Sigma \in (M,\xi)$ be a surface, either closed or with Legendrian boundary.  The restriction of $\xi$ to $\Sigma$ determines a singular line field $\lambda = \xi \cap T\Sigma$ on $\Sigma$ which integrates to a singular foliation $\cF$ on $\Sigma$ called the {\it characteristic foliation}.  The singularities of $\cF$ occur when $\xi$ is tangent to $\Sigma$ and are signed according to whether it is a positive or negative tangency.

The surface $\Sigma \in (M,\xi)$ is {\it convex} if there exists a neighborhood $\nu(\Sigma) = \Sigma \times (-\epsilon,\epsilon)$ in which the contact structure is vertically invariant.  If $\Sigma$ is convex, the {\it dividing set} is a multicurve $\Gamma_{\Sigma}$,  transverse to the leaves of $\cF$ and unique up to isotopy, that divides $\Sigma$ into two subsurfaces $\Sigma_{\pm}$ so that all positive (resp. neg) singularities of $\cF$ lie in $\Sigma_+$ (resp. $\Sigma_-$).  

If $\Sigma$ is convex, then the contact geometry in a neighborhood of $\Sigma$ is determined by the isotopy class of the dividing set.  Giroux's Flexibility Theorem states that we can achieve any characteristic foliation divided by the same multicurve by a $C^{\infty}$-small perturbation of $\Sigma$.  Also, if $\Sigma$ is convex, then Giroux's Criterion states that it has a tight neighborhood if and only if either (1) $\Sigma = S^2$ and the dividing set is connected, or (2) $\Sigma \neq S^2$ and the dividing set has no contractible components.

An arc in $\Sigma$ is Legendrian if and only if it is contained in $\cF$.  Let $C$ be a connected Legendrian arc in a surface $\Sigma$.  If $C$ is not closed, assume its endpoints lie in singularities of the characteristic foliation.  Define $\tw(C,\Sigma) \in \frac{1}{2}\ZZ$ to be the twisting number of the contact planes along $C$ relative to the framing determined by $\Sigma$.  If $C$ connects singularities of opposite sign, the twisting is a half-integer.  Otherwise it is a whole integer.

If a Legendrian arc $C$ lies on a surface $\Sigma$ that is not convex, there is $C^0$-small isotopy of $\Sigma$ rel $C$ to a convex surface if and only if $\tw(C,\Sigma) \leq 0$.  On a convex surface with dividing set $\Gamma$, the twisting number satisfies the formula \cite{Kanda}
\begin{align}
\tw(C,\Sigma) = -\frac{1}{2} \# (C \cap \Gamma)
\end{align}

Let $G$ be a Legendrian graph lying on a surface $\Sigma$.  If $\tw(e,\Sigma) \leq 0$ for all edges of $G$, then there is a $C^0$-small perturbation of $\Sigma$, fixing $G$, so that $\Sigma$ is convex.  Conversely, let $G$ be any graph lying on a convex surface.  If $G$ is {\it nonisolating} - meaning the dividing set $\Gamma$ intersects every component of $\Sigma \smallsetminus G$ - then $G$ can be {\it Legendrian realized} \cite{Kanda,Honda1}.  That is, there is a $C^0$-small isotopy $\phi_t$ of $\Sigma$ through convex surfaces, fixing $\Gamma$, such that $\phi_1(G)$ is Legendrian.  If $G$ is a Legendrian graph lying on a convex surface $\Sigma$, we can assume that its vertices lie at elliptic singularities of the characteristic foliation and its edges are leaves of the foliation.  

Suppose that two convex surfaces $\Sigma_1,\Sigma_2$ with dividing sets $\Gamma_1,\Gamma_2$ meet transversely along a Legendrian knot $L$. Then intersections points of $\Gamma_1 \cap L$ and $\Gamma_2 \cap L$ alternate along $L$.  If $L$ lies in the boundary of $\Sigma_1$ and $\Sigma_2$, then the union $\Sigma = \Sigma_1 \cup_L \Sigma_2$ can be smoothed to a convex surface in a neighborhood of $L$.  Choose coordinates near $L$ so that $\Sigma_1,\Sigma_2$ meet at a right angle.  When viewed from the exterior of the right angle, the dividing set $\Gamma$ of $\Sigma$ is obtained by connecting each arc of $\Gamma_1$ to the arc of $\Gamma_2$ that lies to its right.

\subsection{Bypasses}
\label{sub:bypasses}

A positive {\it bypass disk} is a convex (half)-disk in $(M,\xi)$ with no singularities of its characteristic foliation in its interior and the following four singularities, in order, along its boundary: positive elliptic, positive hyperbolic, positive elliptic, negative elliptic.  A negative bypass disk has the same singularity types with opposite signs.  The bypass disk has a single dividing arc separating the positive and negative singularities.

\begin{figure}
\centering
\includegraphics[width=.8\textwidth]{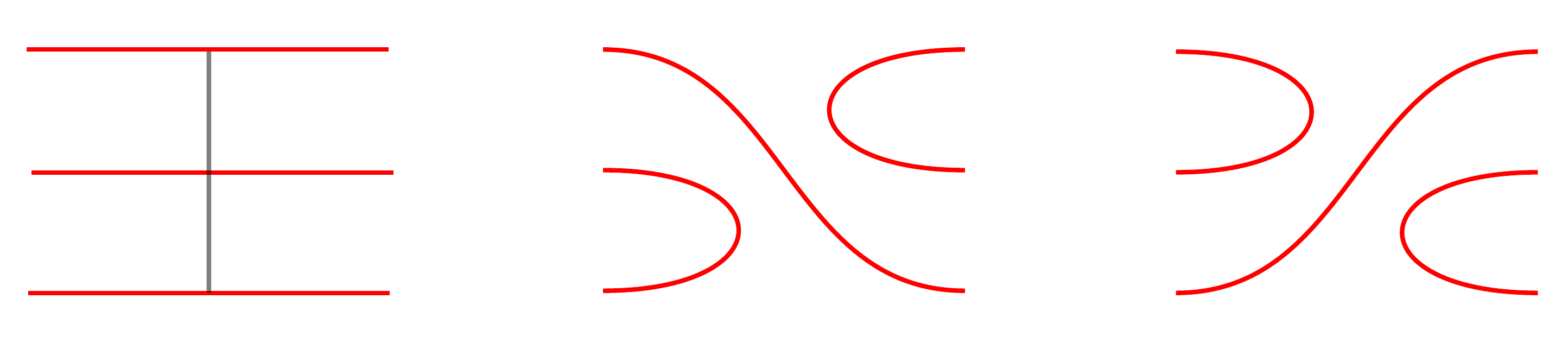}
\caption{Suppose that a bypass can be attached along the arc on the left.  If it is in front, then isotoping over the bypass changes the dividing set as in the middle figure.  If it is behind, the dividing set changes as on the right.}
\label{figure:bypass-effect}
\end{figure}

Let $\Sigma$ be an oriented convex surface with dividing set $\Gamma$.  Let $a$ be an arc in $\Sigma$ that has endpoints in $\Gamma$ and intersects $\Gamma$ once in its interior, as in the first pane of Figure \ref{figure:bypass-effect}.  Suppose that there is a bypass whose intersection with $\Sigma$ is exactly this arc.  Then $a$ is an {\it arc of attachment} for the bypass.  The surface $\Sigma$ can be isotoped over the bypass disk and made convex, but with modified dividing set.  If the bypass lies in front - i.e. on the positively cooriented side of $\Sigma$ - then the dividing set changes as in the middle pane of Figure \ref{figure:bypass-effect}.  If the bypass lies in back - i.e. on the negatively cooriented side - then the dividing set changes according to the right pane.

Bypasses are useful for modifying the isotopy class of the dividing set of a convex surface.  However, some bypasses do not change $\Gamma$ up to isotopy and are called {\it trivial}.  In general, bypasses are hard to find.  However, there are several standard principles that allow us to conclude bypasses exist. 
\begin{enumerate}
\item {\bf Right-to-Life:} Suppose $a$ is an arc in $\Sigma$ corresponding to a trivial bypass attachment.  Then there exists a bypass along $a$.
\item {\bf Imbalance principle:} Suppose that $\Sigma$ is an annulus or disk and has Legendrian boundary $\del \Sigma = c_1 \cup c_2$ with $c_1,c_2$ connected and nonempty with disjoint interiors.   Then if $\tw(c_1,\Sigma) < \tw(c_2,\Sigma)$ then there exists a bypass disk in $\Sigma$ along an arc in $c_2$.
\end{enumerate}

Bypass disks correspond to {\it edge stabilizations} of Legendrian graphs.  Let $e$ be an edge of a Legendrian graph $g: G \rightarrow (S^3,\xi_{std})$ and $B$ a positive bypass disk such that $g(e) \cap B$ is the segment of $\del B$ connecting the positive elliptic singularities.  There is a Legendrian embedding $g_s$ of $G$, smoothly isotopic to $g$ by an isotopy supported in a neighborhood of $B$, such that $g_s(e) \cap B$ is the complementary segment of $\del B$.  If $g(e)$ connects the three positive singularities, the resulting graph $g_s$ is a {\it negative stabilization} of $g$. Conversely, if $g(e)$ connects the three elliptic singularities, the resulting graph is a {\it negative destabilization}.  See Figure \ref{fig:bypass-disk}.  Similarly, if $B$ is negative bypass disk then isotoping an edge across the bypass corresponds to a {\it positive stabilization} or {\it positive destabilization}, as appropriate. 

\begin{figure}
\centering
\labellist
	\small\hair 2pt
	\pinlabel $e$ at 200 25
	\pinlabel $S_-(e)$ at 350 100
	\pinlabel $B$ at 200 70
	\pinlabel $\Sigma_-$ at 100 200
	\pinlabel $\Sigma_+$ at 40 200
	\pinlabel $\Gamma$ at 340 200
\endlabellist
\includegraphics[width=.36\textwidth]{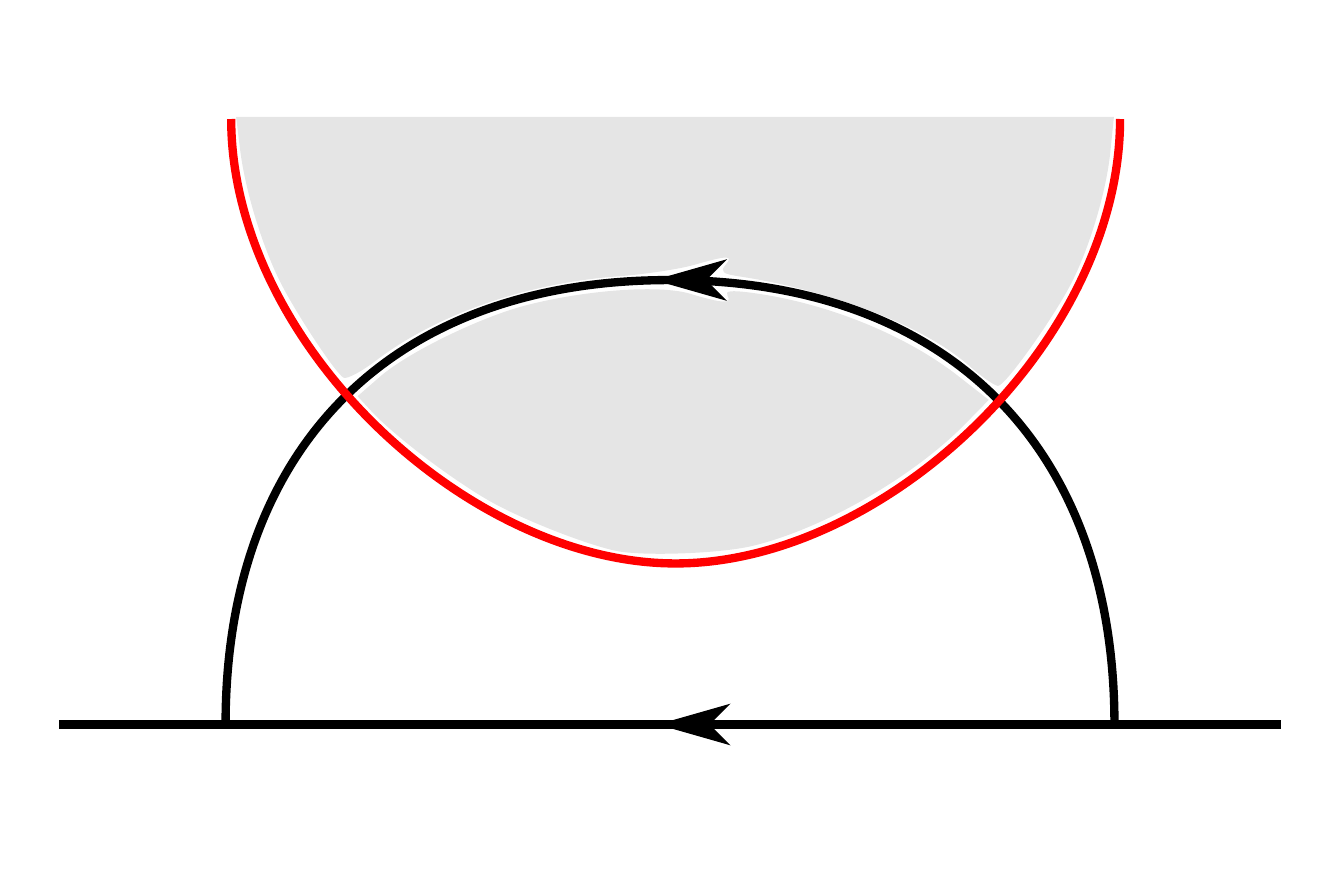}
\caption{Stabilization corresponds to adding a bypass disk to a convex Seifert surface $\Sigma$.  The edge $e$ and its stabilization $S_-(e)$ bound a bypass disk $B$ and a new bigon is formed with the dividing set $\Gamma$.}
\label{fig:bypass-disk}
\end{figure}

Edge (de)stabilizations can be seen easily on a convex surface.  Let $g$ be a Legendrian graph whose image lies on a convex surface $\Sigma$ with dividing set $\Gamma$. Suppose that $\Gamma$ forms a trivial bigon with some edge $g(e)$.  If removing the bigon does not make $g(G)$ isolating, then we can perturb $\Sigma$ so that it contains a bypass disk along $g(e)$.  Using the bypass disk, we can destabilize $g$.   Conversely,  take any edge $g(e)$ and isotope it to $g_s(e)$ in $\Sigma$ so that $g_s(e)$ and $\Gamma$ form a trivial bigon.  By a perturbation of $\Sigma$, we can ensure that the disk bounded between $g(e)$ and $g_s(e)$ is exactly a bypass disk.  Thus, $g_s$ is a stabilization of $g$.

In summary, a diagrammatic isotopy of $\Gamma$ across edges of $g(G)$ (or vice versa) by introducing or eliminating bigons corresponds to stabilizing and destabilizing $g$.  The signs of the stabilization depends on whether the bigon lies on the positive or negative side of the dividing set.

\subsection{Legendrian ribbon, contact framing and Thurston-Bennequin invariant}
\label{sub:TB}

A {\it framing} or {\it ribbon} for a spatial graph $g: G \rightarrow M$ is a compact surface $F \subset M$ containing $g(G)$ as its 1-skeleton.  If $\gamma$ is a cycle in $G$, then a tubular neighborhood of $g(\gamma)$ in $F$ is a (half-integer) framing for the knot $g(\gamma)$.

A {\it Legendrian ribbon} $R_g$ for a Legendrian graph $g: G \rightarrow (M,\xi)$ is a compact, oriented surface such that
\begin{enumerate}
\item $R_g$ contains $g(G)$ as its 1-skeleton,
\item $\xi$ has no negative tangencies to $R_g$,
\item there exists a vector field $X$ on $R_g$ tangent to the characteristic foliation of $R_g$ whose time-$t$ flow $\phi_t$ satisfies $\cap_{t \geq 0} \phi_t(R_g) = g(G)$,
\item the oriented boundary of $R_g$ is positively transverse to the contact structure $\xi$.
\end{enumerate}
The Legendrian ribbon is unique up to ambient contact isotopy and thus an invariant of $g$.  The underlying unoriented surface $\overline{R}_g$ is the {\it contact framing} of $g$.  If $\gamma$ is a cycle in $G$, there is a contact isotopy, supported near the vertices in $\gamma$, of $g$ to $\widehat{g}$ such that $\widehat{g}(\gamma)$ is a Legendrian knot and a tubular neighborhood of $\widehat{g}(\gamma)$ in $\overline{R}_g$ gives the contact framing of this knot.

At each vertex $v$, the Legendrian ribbon induces an oriented cyclic ordering of the edges incident to $v$.  Along each edge $e$, the embedding $g$ can be parametrized by a path $e(t)$ with $e(0) = v$ and $e'(0) = \lim_{t \rightarrow 0} e'(t)$ a well-defined vector in $\xi_{g(v)}$.  If $v$ has valence $k$, there is a unique indexing of the edges such that $\{e'_1(0),\dots,e'_k(0)\}$ are cyclically ordered in the oriented contact plane $\xi_{g(v)}$.

Let $L$ be a nullhomologous Legendrian knot in $(M,\xi)$.  The {\it Thurston-Bennequin number} $\tb(L)$ is the integral difference between the contact and nullhomologous framings of $L$.  If $L$ is homologically essential, the contact framing is still a well-defined invariant.  If $L_1,L_2$ are topologically-isotopic Legendrian knots, the difference between their contact framings is always a well-defined integer and $L_1,L_2$ have the same contact framing if and only if this integral difference is 0.

If $g: G \rightarrow (M,\xi)$ is a Legendrian graph, then for each cycle $\gamma$ of $G$ the contact framing $\overline{R}_g$ determines the contact framing for the Legendrian knot $g: \gamma \rightarrow (M,\xi)$.  If $g(\gamma)$ is nullhomologous, the cycle $\gamma$ has a well-defined Thurston-Bennequin number $\tb_g(\gamma)$.  For Legendrian graphs in $S^3$ (with any contact structure), all cycles have nullhomologous image.  Given a fixed enumeration of the cycles of $G$, the {\it Thurston-Bennequin cycle invariant} $\tb_g \in \ZZ^{|C(G)|}$ is the vector of Thurston-Bennequin numbers of each cycle \cite{OP-1}.  It is determined by the contact framing $\overline{R}_g$ but the converse is not true in general.

Edge stabilizations change the Thurston-Bennequin invariant.  Suppose that $g_s$ is a stabilization of $g$ along an edge $e$.  Then the Thurston-Bennequin invariants of $g,g_s$ satisfy the relation
\[\tb_{g_s}(\gamma) = \begin{cases}
\tb_g(\gamma) - 1 & \text{if $e \in \gamma$} \\
\tb_g(\gamma) & \text{if $e \notin \gamma$}
\end{cases}\]
Edge destabilization has the opposite effect.

\subsection{Rotation number}
\label{sub:rotation}

The rotation number is the second classical invariant of Legendrian knots.  Using the rotation number, we define two invariants of edge stabilization classes of planar Legendrian graphs.

Let $L$ be an oriented Legendrian knot in $(S^3,\xi_{std})$.  Fix a trivialization of the contact structure $\xi_{std}$ in a 3-ball containing $L$.  The {\it rotation number} $\rot(L)$ is the winding number of $TL \subset \xi_{std}$ in the contact planes relative to this fixed trivialization.  It is independent of the choice of trivialization of $\xi_{std}$.  If $\Sigma$ is a convex Seifert surface for $L$ then the rotation number satisfies \cite{Kanda}
\begin{align}
\rot(L) = \chi(\Sigma_+) - \chi(\Sigma_-)
\end{align}
For a Legendrian graph $g: G \rightarrow (S^3,\xi_{std})$, the rotation number $\rot_g(\gamma)$ of each cycle is well-defined.  Given an enumeration of the cycles of $G$, the {\it rotation invariant} $\rot_g \in \ZZ^{|C(G)|}$ is the vector of rotation numbers of each cycle \cite{OP-1}.

Edge stabilizations change the rotation invariant.  Suppose that $g_s$ is a stabilization of $g$ of sign $\sigma$ along an edge $e$.  Then the rotation invariants of $g,g_s$ satisfy the relation
\[\rot_{g_s}(\gamma) = \begin{cases}
\rot_g(\gamma) + \sigma & \text{if $e \in \gamma$} \\
\rot_g(\gamma) & \text{if $e \notin \gamma$}
\end{cases}\]
Edge destabilization has the opposite effect.

A set of oriented cycles $C = \{\gamma_1,\dots,\gamma_k\}$ in the graph $G$ is {\it fundamental} if (1) each edge is contained in exactly $2k$ cycles, and (2) $k$ cycles containing an edge are oriented in one directions along that edge and the remaining $k$ cycles are oriented in the opposite direction. For a set of oriented cycles $C$, the {\it total rotation number} $\rot_g(C)$ of C is the sum of the rotation numbers of the cycles of $C$:
\[\rot_g(C) := \sum_{\gamma_i \in C} \rot_g(\gamma_i)\]

Let $x,y$ be a 3-connected pair of vertices in $G$.  Recall that this means at least 3 vertices in $G$ must be removed to separate $x$ and $y$. By Menger's Theorem, there exists at least 3 vertex-independent, oriented paths $p_1,p_2,p_3$ from $x$ to $y$.  Let $H$ be the union $p_1 \cup p_2 \cup p_3$, which is a subdivision of a $\Theta$-graph.  If $g: G \rightarrow M$ is an embedding, then $g$ restricts in an obvious way to an embedding $h: H \rightarrow M$ and if $g$ is Legendrian so is $h$.  The graph $H$ has three oriented cycles $C_1 = p_1 - p_2; C_2 = p_2 - p_3; C_3 = p_3 - p_1$ and the set $C = \{C_1,C_2,C_3\}$ is fundamental.  If $g$ is Legendrian embedding of $G$, label the paths such that $p_1,p_2,p_3$ are positively cyclically ordered in the contact plane $\xi_{g(x)}$.  

\begin{lemma}
\label{lemma:total-rot-theta}
The total rotation number of $h$ over the set $C$ satisfies
\[\rot_h(C) = \begin{cases} 1 & \text{if $p_1,p_2,p_3$ are negatively cyclically orded in $\xi_{g(y)}$} \\ 0 & \text{if $p_1,p_2,p_3$ are positively cyclically orded at $\xi_{g(y)}$} \end{cases}\]
\end{lemma}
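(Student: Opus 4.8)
The claim is about a $\Theta$-graph $H$ built from three vertex-independent paths $p_1, p_2, p_3$ from $x$ to $y$, with cycles $C_1 = p_1 - p_2$, $C_2 = p_2 - p_3$, $C_3 = p_3 - p_1$. The plan is to compute $\rot_h(C) = \rot_h(C_1) + \rot_h(C_2) + \rot_h(C_3)$ using the convex-surface formula $\rot(L) = \chi(\Sigma_+) - \chi(\Sigma_-)$ for each cycle, and to track how the cyclic orderings of the tangent vectors $p_i'(0)$ in the contact planes $\xi_{g(x)}$ and $\xi_{g(y)}$ affect this sum.

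**The reduction to a local model.** Since the rotation number is a homotopy-theoretic invariant computed via a trivialization of $\xi_{std}$ over a ball, and since the cyclic orderings at the two vertices are the only data entering the statement, I would first argue that $\rot_h(C)$ depends only on the germs of $H$ near $x$ and $y$ together with the homotopy classes of the paths. The strategy is then to realize $H$ as a Legendrian graph on a convex surface $\Sigma$ (a ribbon/disk model) and use the additivity of the Euler-characteristic formula. Summing over the three cycles, each path $p_i$ appears in exactly two cycles with opposite orientations, so all the ``interior'' contributions along the edges cancel, and the total $\rot_h(C)$ localizes to the two vertices $x$ and $y$. This is the heart of why the answer is so simple (just $0$ or $1$).

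**The vertex computation.** At each trivalent vertex, the three outgoing tangent vectors $p_1'(0), p_2'(0), p_3'(0)$ sit in the oriented contact plane, and the Legendrian ribbon prescribes their cyclic order. The key computation is to evaluate the local rotation contribution at $x$ (where the ordering is positive by the labeling convention) and at $y$ (where it is either positive or negative). I would set up an explicit front-projection or convex-disk model of a $\Theta$-graph, compute $\chi(\Sigma_+) - \chi(\Sigma_-)$ for the standard configuration, and check that reversing the cyclic order at $y$ alone changes the total by exactly $1$. Concretely, the positively-ordered case should correspond to a planar (unknotted, $\Theta$-shaped) ribbon whose total rotation over the fundamental set vanishes, while the negatively-ordered case introduces a half-twist at $y$ that contributes $+1$ to the sum.

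**Anticipated obstacle.** The main difficulty is keeping the orientation and sign conventions consistent: the rotation number is orientation-dependent, each path enters two cycles with opposite signs, and the cyclic-ordering hypothesis is itself orientation-sensitive. I expect the bookkeeping of how $\chi(\Sigma_\pm)$ splits across the three overlapping cycles — and verifying that the edge contributions genuinely cancel in the sum while only the vertex germs survive — to be where the real care is needed. A clean way to manage this is to fix one explicit Legendrian model for each of the two cases and compute $\rot_h(C)$ directly, then invoke the homotopy-invariance reduction from the first step to conclude that these two model values ($0$ and $1$) are forced in general.
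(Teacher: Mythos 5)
Your proposal is correct, but it takes a genuinely different route from the paper: the paper's entire proof of this lemma is a one-line citation (``up to switching the orientations, this statement is an immediate consequence of the proof of Lemma 5 in \cite{OP-Theta}''), whereas you reconstruct a self-contained argument. Your central observation --- that in $\rot_h(C) = \sum_i \rot_h(C_i)$ each path $p_i$ is traversed once in each direction, so the tangent-winding contributions along the edges cancel exactly and the total localizes to the corner turns at the two vertices --- is correct, and it is in fact stronger than the ``homotopy-invariance'' framing you give it: the total depends only on the tangent directions of the $p_i$ in $\xi_{g(x)}$ and $\xi_{g(y)}$, with no homotopy or isotopy input at all. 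This matters, because Legendrian embeddings with the same vertex germs need not be connected by any Legendrian isotopy, so the reduction must be justified by exactly this winding/corner decomposition and not by an isotopy argument. For the same reason you can dispense with the convex-surface realization and the formula $\rot(L) = \chi(\Sigma_+) - \chi(\Sigma_-)$ altogether: with the minimal-turning convention at corners, the three jumps at $x$ (from $-p_{i+1}'$ to $p_i'$, indices mod $3$) sum to $+\pi$ when $p_1,p_2,p_3$ are positively ordered at $x$, while the three jumps at $y$ sum to $-\pi$ or $+\pi$ according as the ordering there is positive or negative, giving totals $0$ and $1$ directly. This also explains the asymmetric answer $\{0,1\}$ that your model computation would otherwise have to reproduce: it comes entirely from the fixed positive labeling convention at $x$, and the one place needing real care --- as you correctly anticipate --- is fixing the corner convention for rotation numbers of piecewise-smooth Legendrian cycles once and using it consistently, since that is precisely where these values enter. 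What your route buys is a complete, self-contained proof in the spirit of the computation behind Lemma 5 of \cite{OP-Theta}; what the paper's route buys is brevity, by deferring that same localization-plus-local-computation to the cited work.
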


\begin{proof}
Up to switching the orientations, this statement is an immediate consequence of the proof of Lemma 5 in \cite{OP-Theta}. 
\end{proof}

\subsection{Classifying Legendrian graphs}
\label{sub:classifying}

Convex surface theory contains powerful techniques for classifying Legendrian knots.  The following proposition is a standard tool and its proof is based on deep results in contact topology.

\begin{proposition}
\label{prop:dividing-equals-isotopy}
Let $G_1,G_2$ be Legendrian graphs lying on convex surfaces $S_1,S_2$ in $(S^3, \xi_{std})$.  
\begin{enumerate}
\item Suppose that there is a diffeomorphism $i: S_1 \rightarrow S_2$ that sends $G_1$ diffeomorphically to $G_2$ and the dividing set $\Gamma_{S_1}$ diffeomorphically to the dividing set $\Gamma_{S_2}$.  Then there is a contactomorphism of neighborhoods $j: \nu(S_1) \rightarrow \nu(S_2)$ that sends $G_1$ to $G_2$.
\item If $S_1,S_2 \simeq S^2$ then $j$ extends to a contactomorphism $S^3 \rightarrow S^3$.
\item Suppose that $j$ extends to a contactomorphism $S^3 \rightarrow S^3$.  Then $G_1$ and $G_2$ are Legendrian isotopic in $S^3$.
\end{enumerate}
\end{proposition}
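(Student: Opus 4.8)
The plan is to establish the three parts in sequence, reducing each to a structural result of convex surface theory and ultimately to Eliashberg's classification of tight contact structures in dimension three. For part (1), I would first normalize the characteristic foliations using Giroux's Flexibility Theorem. The diffeomorphism $i$ carries $\Gamma_{S_1}$ to $\Gamma_{S_2}$, so $i_*(\cF_{S_1})$ is a singular foliation on $S_2$ divided by $\Gamma_{S_2}$; by Flexibility, after a $C^\infty$-small perturbation of $S_2$ fixing $\Gamma_{S_2}$ and keeping $G_2$ Legendrian (its edges are leaves of both foliations), I may assume $\cF_{S_2} = i_*(\cF_{S_1})$. Since the contact structure on each invariant neighborhood $\nu(S_k) = S_k \times (-\epsilon,\epsilon)$ is vertically invariant, it is determined by the characteristic foliation on the central slice, so $j := i \times \mathrm{id}$ is a contactomorphism $\nu(S_1) \to \nu(S_2)$. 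Because $i(G_1) = G_2$ and both graphs sit in the central slices, $j$ sends $G_1$ to $G_2$.

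For part (2), the crucial point is that $(S^3,\xi_{std})$ is tight, so Giroux's Criterion forces the dividing set of each convex sphere $S_k \simeq S^2$ to be a single circle. Such a sphere separates $S^3$ into two closed balls $B_k^{+}, B_k^{-}$, each inheriting a tight contact structure with convex boundary whose characteristic foliation is divided by one curve. I would then invoke Eliashberg's uniqueness theorem: a tight contact structure on $B^3$ is determined up to isotopy rel boundary by the characteristic foliation on its convex boundary sphere. Applying this separately to the two balls extends the boundary restriction of $j$ to a contactomorphism of each ball, and the two extensions agree with $j$ on the separating collar $\nu(S_1)$, so they glue to a global contactomorphism $S^3 \to S^3$.

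For part (3), I would appeal to the connectedness of the contactomorphism group of $(S^3,\xi_{std})$: every coorientation-preserving contactomorphism is contact-isotopic to the identity, again a consequence of Eliashberg's work in dimension three. Choosing a path $h_t$ of contactomorphisms with $h_0 = \mathrm{id}$ and $h_1 = j$, the family $t \mapsto h_t(G_1)$ is a Legendrian isotopy from $G_1$ to $j(G_1) = G_2$.

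The main obstacle is part (2). The delicate content is verifying that the complementary regions are genuinely standard tight balls---using tightness of $\xi_{std}$ together with Giroux's Criterion to pin down the dividing set---and then applying Eliashberg's rel-boundary uniqueness carefully enough that the two ball extensions match $j$ along the separating collar and assemble into a single smooth contactomorphism. The analytic weight of parts (2) and (3) rests entirely on Eliashberg's theorems, so the remaining work is checking their hypotheses and performing the gluing rather than proving any new estimate.
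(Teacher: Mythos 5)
Your proposal is correct and follows essentially the same route as the paper's proof: Giroux's Flexibility Theorem plus reconstruction of the vertically invariant neighborhood from the characteristic foliation for part (1), Eliashberg's uniqueness of tight contact structures on the $3$-ball inducing a given boundary characteristic foliation for part (2), and Eliashberg's result on the contactomorphism group of $(S^3,\xi_{std})$ for part (3). The only cosmetic difference is that the paper invokes the weak homotopy equivalence $\text{Diff}_{\xi_{std}} \hookrightarrow \text{Diff}_0(S^3)$ directly, whereas you use only its $\pi_0$ consequence (connectedness), which suffices.
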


\begin{proof}
Let $\cF$ be the characteristic foliation on $S_1$ and $i(\cF)$ its image on $S_2$.  Then, Giroux's Flexibility Theorem ensures that we can find a $C^0$-small isotopy of $S_2$ to $S'_2$, fixing $\Gamma_{S_2}$, so that its characteristic foliation is exactly $i(\cF)$.  By Giroux's Reconstruction Lemma, the characteristic foliation determines the contact structures in neighborhoods of $S_1,S'_2$.  As a result, the diffeomorphism $i$ extends to a contactomorphism $j$ of neighborhoods.

Secondly, all smooth spheres in $S^3$ separate it into two 3-balls.   The standard contact structure on $S^3$ is tight \cite{Bennequin} and is unique \cite{Eliashberg92}.  In addition, if $\xi_1,\xi_2$ are tight contact structures on the 3-ball inducing the same characteristic foliation on the boundary, then they are isotopic \cite{Eliashberg92}.  As a result, $j$ extends to the entirety of $S^3$.

Finally, fix a point $p \in S^3$, let $\xi_p = \xi_{std}(p)$ be the hyperplane of the standard contact structure at $p$.  Define $\text{Diff}_0(S^3)$ to be the set of diffeomorphisms of $S^3$ that fix $\xi_p$ and $\text{Diff}_{\xi_{std}}$ the group of diffeomorphisms that fix $\xi_{std}$. Eliashberg \cite{Eliashberg92} proved that the map
\[\text{Diff}_{\xi_{std}} \hookrightarrow \text{Diff}_0(S^3)\]
is a weak homotopy equivalence.  Thus, we can replace $j$ with a contact isotopy that sends $G_1$ to $G_2$.
\end{proof}

\section{Botany problem}
\label{sec:botany}
In this section, we prove Theorem \ref{thrm:complete} and show that the pair $R_g,\rot_g$ is a complete set of invariants for topologically trivial Legendrian graphs.  First, in Subsection \ref{sub:planar} we prove some preliminary results on topologically trivial Legendrian graphs and fix conventions for the proof.  We split the main proof into two parts: in Subsection \ref{sub:base} we prove it for the subclass $\cP_0$ of Legendrian embeddings that can be realized on a convex sphere, then in Subsection \ref{sub:general} we extend it to the general case.

\subsection{Planar Legendrian graphs}
\label{sub:planar}

Let $G$ be an abstract, connected planar graph and $g: G \rightarrow (S^3,\xi_{std})$ a trivial Legendrian embedding.  
In addition, all 2-valent vertices of $G$ can be ignored, since they can simultaneously be smoothed through ambient isotopy.  
Since $g$ is trivial, there exists a smoothly embedded, oriented 2-sphere $\Sigma \subset S^3$ containing the image of $g$.  After a perturbation of $\Sigma$, we can assume that $\xi$ has an isolated tangency to $\Sigma$ at each vertex $g(v)$.  The contact plane $\xi_{g(v)}$ is oriented and let $\sigma(v,\Sigma)$ be the sign of this tangency.  For each edge $e$ of $G$, let $\tw(e,\Sigma) \in \frac{1}{2} \ZZ$ be the twisting of $\xi$ relative to $\Sigma$ along $e$.  Equivalently, $\tw(e,\Sigma)$ is the relative difference along $e$ of the framings induced by $R_g$ and $\Sigma_g$.  If the endpoints of $e$ have the same signs, then $\tw(e,\Sigma)$ is a whole integer, and if they have opposite signs, then $\tw(e,\Sigma)$ is a half-integer.  If $\gamma$ is a cycle, then
\[\tb(\gamma) = \sum_{e \in \gamma} \tw(e,\Sigma)\]
since the surface framing in $\Sigma$ is exactly the nullhomologous framing, as the cycle bounds a disk in $\Sigma$.

Let $g: G \rightarrow (S^3,\xi_{std})$ be a Legendrian graph and let $\Sigma$ be an oriented, embedded sphere containing $g(G)$.  Let $\gamma_1,\gamma_2$ be two cycles of $G$ whose images in $\Sigma$ bound disks $D_1,D_2$ with disjoint interiors.  Orient $\gamma_1,\gamma_2$ as the boundaries of $D_1,D_2$.  Define the {\it boundary connect sum} $\gamma_1 \natural \gamma_2$ of $\gamma_1, \gamma_2$ to be the oriented resolution of $g(\gamma_1) \cup g(\gamma_2)$ in $\Sigma$ at $v$ into a connected simple closed curve.  By a perturbation of $\Sigma$, we can assume that $\gamma_1 \natural \gamma_2$ is a Legendrian knot.

\begin{lemma}
\label{lemma:del-connect-sum}
Let $\gamma_1,\gamma_2$ be cycles that intersect at $v$.  Then
\begin{align*}
\tb(\gamma_1 \natural \gamma_2) &= \tb_g(\gamma_1) + \tb_g(\gamma_2) \\
\rot(\gamma_1 \natural \gamma_2) &= \rot_g(\gamma_1) + \rot_g(\gamma_2) - \sigma(v,\Sigma)
\end{align*}
\end{lemma}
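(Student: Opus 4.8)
The plan is to compute each invariant of the boundary connect sum separately, using the additivity of these invariants under the resolution operation together with the formulas already recorded in the excerpt. Since $\gamma_1 \natural \gamma_2$ is obtained by resolving $g(\gamma_1) \cup g(\gamma_2)$ at the single common vertex $v$, and since the two disks $D_1, D_2$ have disjoint interiors in $\Sigma$, the resolved curve bounds the boundary-connect-sum disk $D_1 \natural D_2$ in $\Sigma$. The overall strategy is thus to work on $\Sigma$, exploit the fact that each cycle bounds a disk there, and apply the twisting-number decomposition $\tb(\gamma) = \sum_{e \in \gamma} \tw(e,\Sigma)$ and the rotation formula $\rot(L) = \chi(\Sigma_+) - \chi(\Sigma_-)$.

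For the Thurston--Bennequin identity I would argue as follows. After a small perturbation the resolved knot $\gamma_1 \natural \gamma_2$ is Legendrian and bounds the disk $D_1 \natural D_2$, whose framing is the nullhomologous framing. The edge set of $\gamma_1 \natural \gamma_2$ is exactly the union of the edges of $\gamma_1$ and $\gamma_2$, since the resolution at $v$ does not create or destroy edges away from a neighborhood of $v$. Applying
\[\tb(\gamma) = \sum_{e \in \gamma} \tw(e,\Sigma)\]
to each of $\gamma_1$, $\gamma_2$, and $\gamma_1 \natural \gamma_2$, and noting that the twisting contributions are local to edges, the edge twistings simply add, giving $\tb(\gamma_1 \natural \gamma_2) = \tb_g(\gamma_1) + \tb_g(\gamma_2)$. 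I expect this half to be essentially bookkeeping once the edge decomposition of the resolution is made precise near $v$.

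For the rotation identity I would use $\rot(L) = \chi(\Sigma_+) - \chi(\Sigma_-)$ applied to the convex disks bounded by the three knots. The disk $D_1 \natural D_2$ is the boundary connect sum of $D_1$ and $D_2$ along a band at $v$, so its Euler characteristic satisfies $\chi(D_1 \natural D_2) = \chi(D_1) + \chi(D_2) - 1$; this $-1$ is where the correction term enters. The subtle point is tracking how $\Sigma_\pm$ for the connect-sum disk decompose in terms of those for $D_1$ and $D_2$: the band glued at $v$ absorbs the tangency singularity at $g(v)$, which lies in $\Sigma_+$ or $\Sigma_-$ according to the sign $\sigma(v,\Sigma)$. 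I would show that the dividing-set data forces the reduction in Euler characteristic to be charged entirely to $\Sigma_{\sigma(v,\Sigma)}$, so that
\[\rot(\gamma_1 \natural \gamma_2) = \bigl(\chi(D_{1,+}) + \chi(D_{2,+})\bigr) - \bigl(\chi(D_{1,-}) + \chi(D_{2,-})\bigr) - \sigma(v,\Sigma),\]
which rearranges to the claimed formula. The main obstacle is precisely this local analysis at $v$: verifying that the sign of the correction matches $\sigma(v,\Sigma)$ rather than its negative, which requires a careful look at the characteristic foliation and dividing set in the band neighborhood of the vertex where the two disks are joined. I would handle this by choosing convex representatives of $D_1, D_2$ meeting transversally near $v$ and appealing to the edge-rounding description of how dividing sets combine, as recorded at the end of Subsection \ref{sub:convex}.
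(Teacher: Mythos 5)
Your proposal is correct and takes essentially the same route as the paper: additivity of $\tb$ from the locality of twisting (the paper phrases this as additivity of $\#\,(\Gamma \cap \cdot)$ under the resolution, which is the same computation since $\tw(e,\Sigma) = -\frac{1}{2}\#(e \cap \Gamma)$), and the rotation formula from $\rot = \chi(D^+) - \chi(D^-)$ together with the observation that the resolving band is a 1-handle joining $D_1^{\pm}$ to $D_2^{\pm}$ inside the region of sign $\sigma(v,\Sigma)$. One small remark: your closing appeal to edge rounding is not the right tool here --- $D_1$ and $D_2$ are subsurfaces of the single sphere $\Sigma$ meeting at the point $g(v)$, not convex surfaces intersecting transversally along a Legendrian curve --- but no such device is needed, since a neighborhood of the tangency at $g(v)$ lies entirely in $\Sigma_{\sigma(v,\Sigma)}$, which immediately charges the $-1$ in Euler characteristic to the correct side, exactly as in the paper's proof.
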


\begin{proof}
Since $\gamma_1 \natural \gamma_2$ is a resolution of $\gamma_1 \cup \gamma_2$ at $g(v)$, the intersection numbers satisfy
\[
-\frac{1}{2} \# \Gamma \cap (\gamma_1 \natural \gamma_2)  =  -\frac{1}{2} ( \# \Gamma \cap \gamma_1 + \# \Gamma \cap \gamma_2) \]
Thus, the Thurston-Bennequin number is additive.  Secondly, let $D$ be the disk bounded by $\gamma_1 \natural \gamma_2$.  If $\sigma(v,\Sigma) = +1$, then
\begin{align*}
\chi(D^+) &= \chi(D^+_1) + \chi(D^+_2) - 1 \\
\chi(D^-) &= \chi(D^-_1) + \chi(D^-_2)
\end{align*}
since the resolution introduces an extra 1-handle connecting $D^+_1$ to $D^+_2$.  As a result, 
\begin{align*}
\rot(\gamma_1 \natural \gamma_2) &= \chi(D^+) - \chi(D^-) \\
&= \chi(D^+_1) + \chi(D^+_2) - 1 - \left(\chi(D^-_1) + \chi(D^-_2)\right) \\
&= \chi(D^+_1) - \chi(D^-_1) + \chi(D^+_2) - \chi(D^-_2) - 1\\
&= \rot_g(\gamma_1) + \rot_g(\gamma_2) - 1
\end{align*}
A similar argument proves the lemma if $\sigma(v,\Sigma) = -1$.
\end{proof}

\subsubsection{The surface $\Sigma_g$}
An edge $e$ of $G$ is a {\it cut edge} if $G \smallsetminus e$ is disconnected and a pair of edges $e_1,e_2$ are a {\it cut pair of edges} if $G \smallsetminus ( e_1 \cup e_1)$ is disconnected.  Fix an embedding $g: G \rightarrow S^2$.  An edge $e$ is a cut edge if and only if the same face of $S^2 \smallsetminus g(G)$ lies on both sides of $g(e)$.  Similarly, a pair $e_1,e_2$ are a cut pair of edges if and only if they both lie in the boundaries of an adjacent pair of faces.
A vertex $v$ of $G$ is a {\it cut vertex} if deleting $v$ (and all incident edges) results in a disconnected graph. 
A pair of vertices $v_1, v_2$ of $G$ are a {\it cut pair of vertices} if deleting both of the vertices (and all incident edges) results in a disconnected graph. 

\begin{lemma}
\label{lemma:half-twist-sphere}
Let $g: G \rightarrow (M,\xi)$ be a trivial Legendrian embedding and $\Sigma$ a 2-sphere containing $g(G)$.  Fix $N \in \frac{1}{2} \ZZ$.
\begin{enumerate}
\item If $e$ is a cut edge of $G$, then there exists a 2-sphere $\Sigma'$ containing $g(G)$ such that 
\[\tw(e,\Sigma') = N\]
\item If $e_1,e_2$ are a cut pair of edges of $G$, then there exists a 2-sphere $\Sigma'$ containing $g(G)$ such that 
\[\tw(e_1,\Sigma') = N \quad \text{and} \quad \tw(e_2,\Sigma') = \tw(e_1,\Sigma) + \tw(e_2,\Sigma) - N\]
\end{enumerate}
Furthermore, in both cases $\tw(e',\Sigma) = \tw(e',\Sigma')$ for all other edges $e'$ of $G$.
\end{lemma}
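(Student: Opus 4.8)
The plan is to change the surface framing that $\Sigma$ induces on the relevant edge(s) by twisting $\Sigma$ along a carefully chosen separating circle, while keeping the Legendrian graph $g(G)$ and the contact structure fixed. Since $\tw(e,\Sigma)$ is the difference between the fixed contact framing and the surface framing, adding $k$ full twists to the band of $\Sigma$ running along $e$ changes $\tw(e,\Sigma)$ by $\pm k$. The role of the cut hypotheses is precisely to produce a simple closed curve $c \subset \Sigma$ that meets $g(G)$ only along the edge(s) we wish to modify; twisting $\Sigma$ in a collar of $c$ then alters the surface framing of exactly those edges and of no others.

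First I would construct $c$. For Part (1), the characterization of a \emph{cut edge} recorded above says that the same face $F$ of $S^2 \smallsetminus g(G)$ lies on both sides of $g(e)$. Hence I can choose an arc in $F$ joining the two sides of $g(e)$ and close it up through a single transverse crossing of $g(e)$ to obtain a simple closed curve $c$ with $c \cap g(G)$ a single interior point of $e$. Then $c$ separates $\Sigma$ into two disks $D_1, D_2$ containing the two components of $g(G \smallsetminus e)$. I would build $\Sigma'$ by cutting $\Sigma$ along $c$ and regluing after inserting $k$ full twists in a small annular collar of $c$ (equivalently, spinning the band of $\Sigma$ around the fixed Legendrian arc $g(e)$ by $2\pi k$). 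This keeps $g(G)$ pointwise fixed, keeps the contact structure fixed, and keeps $\Sigma'$ an embedded $2$-sphere; it changes the surface framing of $e$ by $\pm k$ and leaves every other edge untouched, since $c$ is disjoint from them. Choosing the sign and magnitude of $k$ gives $\tw(e,\Sigma') = N$.

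For Part (2), the \emph{cut pair} hypothesis gives two faces $F_1, F_2$ having both $g(e_1)$ and $g(e_2)$ on their common boundary, so I can route $c$ from $F_1$ across $e_1$ into $F_2$ and back across $e_2$ into $F_1$, meeting $g(G)$ in exactly one interior point of each of $e_1, e_2$. Again $c$ separates $\Sigma$ into disks $D_1, D_2$ containing the two components of $g(G \smallsetminus (e_1 \cup e_2))$, and twisting in a collar of $c$ changes only the framings of $e_1$ and $e_2$. The crucial point is the signs: because $c$ crosses $e_1$ going from $F_1$ to $F_2$ and crosses $e_2$ going from $F_2$ back to $F_1$, the two intersections carry opposite co-orientation, so $k$ full twists change $\tw(e_1,\Sigma)$ by $+k$ and $\tw(e_2,\Sigma)$ by $-k$ (or vice versa). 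In particular the sum $\tw(e_1,\cdot)+\tw(e_2,\cdot)$ is preserved, and choosing $k$ so that $\tw(e_1,\Sigma') = N$ forces $\tw(e_2,\Sigma') = \tw(e_1,\Sigma)+\tw(e_2,\Sigma)-N$, as claimed. The ``furthermore'' clause holds in both cases because $c$ avoids all remaining edges.

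The main obstacle is making the twisting operation rigorous: verifying that inserting $k$ full twists along $c$ can be realized by an ambient modification of $\Sigma$ supported away from $g(G)$, keeping $\Sigma'$ an embedded sphere containing $g(G)$, and that it changes the surface framing of each edge crossing $c$ by exactly $\pm 1$ per twist with the sign dictated by the co-orientation of the crossing. The sign bookkeeping in Part (2) — that the two crossings are genuinely opposite, so the sum of twistings is conserved rather than increased — is the delicate point. I would also flag a parity constraint: since the construction fixes the vertex tangencies $\sigma(v,\Sigma)$, it preserves the parity of $\tw$ along each edge, so only those $N$ congruent to $\tw(e,\Sigma)$ (resp.\ $\tw(e_1,\Sigma)$) modulo $\ZZ$ are attainable, which is the implicit meaning of ``Fix $N \in \frac{1}{2}\ZZ$'' here (a half-twist would render the band non-orientable and so cannot be absorbed by the sphere).
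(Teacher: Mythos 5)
Your construction of the twisting curve and the twisting operation is essentially the paper's: for a cut edge the paper takes a simple closed curve $\delta \subset \Sigma$ meeting $g(G)$ exactly once along $e$ and realizes the framing change by a rotation isotopy $h_{\frac{1}{2}}$ supported near the disk that $\delta$ bounds (with $h_k = (h_{\frac{1}{2}})^{2k}$), and for a cut pair it takes $\delta$ meeting the graph once in each of $e_1,e_2$; your observation that the two crossings contribute with opposite signs, so that $\tw(e_1,\cdot)+\tw(e_2,\cdot)$ is conserved, is exactly the paper's bookkeeping.

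However, your closing ``parity constraint'' is a genuine error, and it contradicts the statement you were asked to prove: the lemma allows \emph{every} $N \in \frac{1}{2}\ZZ$, not only those with $N - \tw(e,\Sigma) \in \ZZ$. The paper realizes half-integer changes by the odd powers of the half-rotation $h_{\frac{1}{2}}$, i.e.\ by a $\pi$-rotation that flips the entire disk on one side of $\delta$ over. Both of your supporting claims fail for this move. First, the construction does \emph{not} fix the vertex tangencies: the flip reverses $\sigma(v,\Sigma)$ at every vertex on the flipped side, and this sign reversal is precisely what changes the integer/half-integer parity of $\tw(e,\Sigma)$ (recall the parity is governed by whether the endpoint signs of $e$ agree). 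Second, a half twist does \emph{not} force nonorientability: the half twist inserted in the band along $e$ is absorbed by reattaching the complementary disk with reversed coorientation, and this is possible exactly because $e$ is a cut edge, so no cycle of $G$ runs through $e$ and there is no orientability or $\tb$-integrality obstruction; in the cut-pair case the compensating half twist along $e_2$ plays the same role, which is why only the sum $\tw(e_1,\cdot)+\tw(e_2,\cdot)$ is constrained. (A caveat worth knowing: the odd powers of $h_{\frac{1}{2}}$ move the portion of $g(G)$ inside the flipped disk, so for half-integer $N$ the sphere $\Sigma'$ is obtained by re-choosing the capping disk through that subgraph rather than by an ambient isotopy rel $g(G)$ --- but existence is all the lemma asserts.) The full strength of the statement is needed downstream: Corollary \ref{cor:sigma-g} arranges $\tw(e,\Sigma_g) = 0$ on every cut edge, including those whose endpoints have opposite tangency signs, where $\tw(e,\Sigma)$ starts as a half-integer; under your restricted version that normalization, and the convex realization of $\Sigma_g$ built on it, would fail.
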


\begin{proof}
First, let $e$ be a cut edge.  Then there is a simple closed curve $\delta$ in $\Sigma$ that intersects $G$ in exactly one point along $e$.  Let $D$ be the disk bounded by this curve.  Choose a coordinate chart on a neighborhood $U$ of $D$ in $S^3$ that sends $D$ to the unit disk in the $xy$-plane and the segment of $e$ outside $D$ to the positive $x$-axis.  Let $h_{\frac{1}{2}}$ be an isotopy, supported in $U$, that rotates the unit disk around the $x$-axis by the angle $-\pi$.  Define $h_{k} = (h_{\frac{1}{2}})^{2k}$ for $k \in \frac{1}{2}\ZZ$.  The surface $h_{N - \tw(e,\Sigma)}(\Sigma)$ is the required surface.

Secondly, if $e_1,e_2$ are a pair of cut edges then there exists a simple closed curve $\delta$ in $\Sigma$ that intersects $G$ in exactly two points, once in $e_1$ and once in $e_2$, bounding a disk $D$. Define a similar isotopy $h_{\frac{1}{2}}$ of $g(G)$ in $S^3$ that increments $\tw(e_1,\Sigma)$ by $\frac{1}{2}$ and $\tw(e_2,\Sigma)$ by $-\frac{1}{2}$.  Define $h_k$ as above.  As a result, $h_{N - \tw(e_1,\Sigma)}(\Sigma)$ is the required surface.
\end{proof}

\newpage
\begin{corollary}
\label{cor:sigma-g}
Let $g$ be a Legendrian embedding of $G$.  There exists a smoothly embedded sphere $\Sigma_g$ containing $g(G)$ such that
\begin{enumerate}
\item if $e$ is a cut edge, then $\tw(e,\Sigma_g) = 0$, and
\item if $e_1,\dots,e_k$ are the edges in the common boundary of two adjacent faces of $\Sigma_g \smallsetminus g(G)$, then 
\[\tw(e_2,\Sigma_g) = \cdots = \tw(e_k,\Sigma_g) = 0\]
\end{enumerate}
\end{corollary}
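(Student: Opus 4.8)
The plan is to begin with an arbitrary smoothly embedded sphere $\Sigma$ containing $g(G)$ --- which exists because $g$ is topologically trivial --- and then to normalize the twisting one edge at a time by repeatedly invoking Lemma~\ref{lemma:half-twist-sphere}. The engine of the whole argument is the ``Furthermore'' clause of that lemma: each modification leaves $\tw(e',\Sigma)$ unchanged for every edge $e'$ not directly involved. This non-interference is what lets us treat the cut edges and the cut-pair families independently and sequence all the adjustments into a single final sphere $\Sigma_g$.

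First I would dispose of condition (1). Enumerate the cut edges $c_1,\dots,c_m$ of $G$. Applying Lemma~\ref{lemma:half-twist-sphere}(1) with $N=0$ to $c_1$ produces a sphere on which $\tw(c_1)=0$ while fixing the twisting of every other edge; repeating in turn for $c_2,\dots,c_m$ preserves the already-zeroed values, since each step alters only the edge it treats. After $m$ steps every cut edge has zero twisting.

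Next I would establish condition (2). The key structural observation is that each non-cut edge $e$ has two \emph{distinct} faces $F_e,F'_e$ on its sides, so it lies in exactly one family, namely the set of all edges in $\partial F_e \cap \partial F'_e$; distinct families are therefore edge-disjoint, and they are disjoint from the cut edges (a cut edge has the same face on both sides). Fix a family $e_1,\dots,e_k$ with $k\ge 2$ bounding an adjacent pair of faces $F,F'$. By the face-characterization of cut pairs recorded just before the corollary, any two of its edges form a cut pair, so Lemma~\ref{lemma:half-twist-sphere}(2) applies. For $i=2,\dots,k$ in turn I would apply part (2) to the cut pair $(e_i,e_1)$ with $N=0$: this sets $\tw(e_i)=0$, transfers its former twisting onto $e_1$, and fixes all remaining edges, so the previously-zeroed $e_2,\dots,e_{i-1}$ are untouched. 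Iterating leaves $e_2,\dots,e_k$ all at $0$, and running over all adjacent face-pairs --- which give disjoint families --- achieves (2) everywhere while preserving (1).

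The points requiring care are bookkeeping rather than geometry. I must check that the local rotation isotopies of Lemma~\ref{lemma:half-twist-sphere} preserve the combinatorial face structure of $\Sigma\smallsetminus g(G)$; this holds because each is realized by an ambient isotopy of $S^3$ fixing $g(G)$, which restricts to a homeomorphism of the spheres carrying faces to faces, so ``cut edge,'' ``adjacent faces,'' and the families are well-defined and unchanged throughout, and the statement about the final $\Sigma_g$ is meaningful. The main (and mild) obstacle is verifying non-interference of the successive modifications, and this is exactly what the disjointness above supplies: because each edge is either a cut edge or belongs to a unique face-pair family, and each invocation of the lemma changes the twisting of only the one or two edges it targets, the normalizations never undo one another. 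Hence the process terminates on a single sphere $\Sigma_g$ satisfying both requirements.
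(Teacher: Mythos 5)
Your proposal is correct and is essentially the argument the paper intends: the corollary is stated without proof precisely because it follows by iterating Lemma~\ref{lemma:half-twist-sphere} with $N=0$, using its ``Furthermore'' clause to guarantee non-interference. Your additional bookkeeping --- that each non-cut edge lies in exactly one adjacent-face family, that cut edges lie in no family, and that the twisting isotopies fix $g(G)$ and hence preserve the face structure --- correctly fills in the details the paper leaves implicit.
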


From this point forward, let $\Sigma_g$ denote a surface satisfying the conclusions of Corollary \ref{cor:sigma-g}.

\begin{lemma}
\label{lemma:ribbon-embedding}
Let $G$ be a planar graph and let $g_1,g_2$ be trivial Legendrian embeddings.  If $g_1,g_2$ have the same Legendrian ribbon, then the  pairs $(\Sigma_{g_1},g_1(G))$ and $(\Sigma_{g_2},g_2(G))$ are diffeomorphic. 
\end{lemma}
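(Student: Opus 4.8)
The plan is to show that the oriented Legendrian ribbon completely determines the combinatorial planar embedding carried by $\Sigma_g$, so that equal ribbons force the two embeddings to agree up to diffeomorphism. Throughout I would first delete all valence-$2$ vertices; as noted in Subsection~\ref{sub:planar} this changes neither the Legendrian ribbon nor the pair $(\Sigma_g,g(G))$, and it ensures every remaining vertex has a genuine (reversal-sensitive) cyclic order.

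First I would extract a rotation system from each piece of data. As recalled in Subsection~\ref{sub:TB}, at every vertex $v$ the oriented ribbon $R_g$ determines the cyclic order of the incident edges inside the oriented contact plane $\xi_{g(v)}$; recording this order at each vertex makes $G$ into an oriented ribbon graph with rotation system $\rho^{R}_g$. Since $g_1$ and $g_2$ have the same Legendrian ribbon, an orientation-preserving diffeomorphism of ribbons carries $g_1(G)$ to $g_2(G)$ compatibly with the identifications with $G$, so $\rho^{R}_{g_1}=\rho^{R}_{g_2}=:\rho^{R}$. On the other side, $(\Sigma_{g_i},g_i(G))$ is a cellular embedding of the connected graph $G$ into $S^2$, and such an embedding is determined, up to a diffeomorphism of pairs inducing the identity on $G$, by its rotation system $\rho^{\Sigma}_{g_i}$ taken up to a single global orientation reversal.

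Next I would compare the two rotation systems vertex by vertex. After the perturbation of Subsection~\ref{sub:planar} making $\xi$ tangent to $\Sigma_{g_i}$ in an isolated point at each $g_i(v)$, the tangent plane $T_{g_i(v)}\Sigma_{g_i}$ equals $\xi_{g_i(v)}$, so the cyclic order read off from the oriented surface $\Sigma_{g_i}$ is the contact-plane order $\rho^R$ when the tangency $\sigma(v,\Sigma_{g_i})$ is positive and its reverse when the tangency is negative. Writing $S_i\subseteq V(G)$ for the set of negatively-tangent vertices, this reads $\rho^{\Sigma}_{g_i}=\mathrm{rev}_{S_i}(\rho^{R})$, where $\mathrm{rev}_S$ reverses the cyclic order at each vertex of $S$. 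Because $\Sigma_{g_i}\cong S^2$, each $\rho^{\Sigma}_{g_i}$ is a genus-$0$ rotation system.

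The crux — and the step I expect to be the main obstacle — is the purely combinatorial fact that among the reversals of a fixed planar system only the trivial ones remain planar: if $\rho_0$ is a genus-$0$ rotation system on a connected graph with all valences $\geq 3$ and $T\subseteq V(G)$ is such that $\mathrm{rev}_T(\rho_0)$ is again genus $0$, then $T=\emptyset$ or $T=V(G)$. Granting this and applying it with $\rho_0=\rho^{\Sigma}_{g_1}$ and $T=S_1\,\triangle\,S_2$, the genus-$0$ system $\rho^{\Sigma}_{g_2}=\mathrm{rev}_T(\rho^{\Sigma}_{g_1})$ forces $T\in\{\emptyset,V(G)\}$, so $\rho^{\Sigma}_{g_1}$ and $\rho^{\Sigma}_{g_2}$ agree up to a global reversal and the pairs $(\Sigma_{g_1},g_1(G))$ and $(\Sigma_{g_2},g_2(G))$ are diffeomorphic. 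To prove the combinatorial fact I would invoke Whitney's description of the planar embeddings of a graph: any two differ by a sequence of reflections at cut vertices and flips across $2$-cuts, and each nontrivial such move reverses the cyclic order only on \emph{part} of the edges at the separating vertices, whereas $\mathrm{rev}_T$ reverses the \emph{full} cyclic order at each vertex of $T$. Checking that a nontrivial Whitney flip can therefore never be realized by a vertex-reversal set — equivalently, that reversing at a proper nonempty $T$ strictly lowers the face count $2-V+E$ via the edge cut $\delta(T)$, which is nonempty by connectivity — is where the real work lies. The $3$-connected case requires none of this, being immediate from the uniqueness of the planar embedding, and supplies the model computation (as in the $\Theta$-graph analysis behind \cite{OP-Theta}) that I would generalize.
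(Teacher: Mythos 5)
Your reduction to rotation systems is a reasonable reformulation, and the identity $\rho^{\Sigma}_{g_i}=\mathrm{rev}_{S_i}(\rho^{R})$ at the tangency points is correct. But the combinatorial fact you lean on --- that for a connected graph with all valences $\geq 3$, the only vertex-reversal sets $T$ carrying a genus-$0$ rotation system to a genus-$0$ rotation system are $T=\emptyset$ and $T=V(G)$ --- is false, and it fails exactly at the configurations the paper's proof is designed to handle. Take $G$ to be two copies of $K_4$ joined by a cut edge $e$ (all valences $3$ or $4$), and let $T$ be the vertex set of one of the $K_4$'s. Reversing the full cyclic order at every vertex of $T$ is precisely the Whitney flip of that component across the bridge: geometrically one reflects the disk containing the far component, which reverses the entire rotation at each far vertex (including the endpoint of $e$, since $e$ contributes a single entry there, and reversing a cyclic order fixes any chosen basepoint). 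The result is again planar, so $\mathrm{rev}_T$ preserves genus $0$ with $T$ proper and nonempty. The same happens for $T$ equal to one side of a cut \emph{pair} of edges. This also kills your proposed proof mechanism: reversing at such a $T$ does not lower the face count, and your assertion that a nontrivial Whitney move reverses the order only on \emph{part} of the edges at the separating vertices is true for $2$-vertex-cuts and cut vertices but false for $1$- and $2$-edge-cuts, where the flip is a full reversal at every vertex of one side. So as written, your argument cannot rule out that $\Sigma_{g_1}$ and $\Sigma_{g_2}$ differ by a bridge flip, and the pairs would then \emph{not} be diffeomorphic rel $G$.

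The paper closes exactly this loophole with Corollary \ref{cor:sigma-g} and the extension construction in its proof of the lemma: the sphere is normalized so that $\tw(e,\Sigma_g)=0$ on cut edges and on all but one edge of each cut pair, the ribbon is isotoped into $\Sigma_g$ along such edges, and ribbon-determined parallel edges (plus the cycles of subdivision vertices around each vertex) produce a $3$-connected extension $H$ whose spherical embedding is unique by Whitney's theorem \cite{Whitney}; the bridge/$2$-edge-cut flips are thereby excluded because the added parallel edges record how the ribbon sits in the sphere. If you want to salvage your route instead, the missing ingredient is to pin down the tangency-sign sets $S_i$ rather than quotienting them out: after the normalization of Corollary \ref{cor:sigma-g}, the twisting numbers $\tw(e,\Sigma_{g_i})$ are determined by the ribbon, and $\tw(e,\Sigma_{g_i})$ is an integer precisely when the endpoints of $e$ have equal signs. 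Connectivity then propagates relative signs across all of $G$, forcing $S_1\,\triangle\,S_2\in\{\emptyset,V(G)\}$ directly, with no appeal to the (false) reversal lemma. Without one of these two repairs the proof has a genuine gap for every planar graph with a cut edge or a cut pair of edges.
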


\begin{proof}
By Whitney's Theorem \cite{Whitney}, if $G$ is 3-connected then all embeddings $g: G \rightarrow S^2$ are equivalent up to homeomorphism of $S^2$.  

If $G$ is not 3-connected, we can use $R_g$ to define an extension $H$ that is 3-connected as follows.  Recall that if $e$ is a cut edge then $\tw(e,\Sigma_g) = 0$.  Since there is no twisting of $R_g$ relative to $\Sigma_g$ along $e$, isotope $R_g$ to lie in $\Sigma_g$ along $e$.  Add two extra edges parallel to $e$, one on either side.  This extension is still planar since it lies in $\Sigma_g$ but it also clearly only depends on $R_g$. If $e_1,e_2$ form a cut pair, then up to relabeling we can assume that $\tw(e_2,\Sigma_g) = 0$.  Similarly, add a pair of edges parallel to $e_2$.  Once this is accomplished for all cut edges and cut pairs of edges, the resulting graph $H'$ is 3-edge-connected.

Now, let $v$ be a vertex of $H'$ with incident edges $e_1,\dots,e_k$ in oriented cyclic order.  Near $v$, there is an isotopy of $R_g$ so that it lies in $\Sigma_g$ and therefore $R_g$ determines the cyclic ordering at $v$.  Subdivide each edge to introduce $k$ new vertices $v_1,\dots,v_k$.  Now attach edges connecting $v_i$ to $v_{i+1}$ for $i=1,\dots,k$.  Let $H$ be the graph obtained by this procedure at every vertex of $H'$.  
Since $H'$ is 3-edge connected any cut vertex $v$ of $H'$ will have at least three edges to each of the components that result from its deletion.  
So in $H$ it will take at least three vertex deletions to disconnect such a component.  
Similarly, pairs of cut vertices of $H'$ are no longer pairs of cut vertices in $H$.  
The resulting graph $H$ is 3-connected.  Thus, the embedding of $H \hookrightarrow \Sigma_g$ is unique up to homeomorphism of $S^2$.  Moreover, the construction of $H$ depended only on $R_g$.
\end{proof}

\subsubsection{Overview of the proof of Theorem \ref{thrm:complete}}

Let $p(g)$ be the number of edges of $G$ such that $\tw(e,\Sigma_g) > 0$ and define $\cP_k$ to be the set of Legendrian embeddings $g$ such that $p(g) \leq k$.  This defines an increasing sequence of subclasses
\[\cP_0 \subset \cP_1 \subset \dots \subset \cP_n \subset \dots\subset\cP_{|E(G)|}\]
The surface $\Sigma_g$ can be made convex with $g(G)$ lying in its characteristic foliation if and only if $\tw(e,\Sigma_g) \leq 0$ for all edges $e$ of $G$.  Thus, only Legendrian graphs in $\cP_0$ can be Legendrian realized on a convex 2-sphere.  In general, we can realize graphs in $\cP_k$ on convex surfaces of genus $k$.  

In Subsection \ref{sub:base}, we will first prove Theorem \ref{thrm:complete} for Legendrian embeddings in $\cP_0$ and then in Subsection \ref{sub:general} extend this to arbitrary trivial Legendrian embeddings.

\subsection{Legendrian graphs in $\cP_0$}
\label{sub:base}

First we prove Theorem \ref{thrm:complete} for Legendrian graphs in the subclass $\cP_0$.

If $g \in \cP_0$ then there exists a $C^0$-small perturbation of $\Sigma_g$ fixing $g(G)$ such that $\Sigma_g$ is convex and contains $g(G)$ in its characteristic foliation.  The convex sphere $\Sigma_g$ has a single dividing curve $\Gamma$, since $(S^3,\xi_{std})$ is tight.  The dividing curve separates $\Sigma_g$ into two components $\Sigma^{\pm}_g$ containing the positive and negative tangencies of $\xi$ to $\Sigma$, respectively.  Orient $\Gamma$ as the boundary of the positive region $\Sigma^+_g$ 

Let $F$ be a face of $\Sigma_g \smallsetminus g(G)$.  Since $G$ is connected, $F$ is a topological disk.  Choose a pushoff of $\del F$ into $F$ and perturb $\Sigma_g$ so that this pushoff is Legendrian.   By abuse of notation, we use $\del F$ to denote this Legendrian unknot.  The classical invariants of $\del F$ are $\tb(\del F) = \sum_{e \in \del F} \tw(e,\Sigma_g)$ and $\rot(\del F) = \chi(F^+) - \chi(F^-)$.  

\begin{lemma}
Let $F$ be a face of $\Sigma_g$.  The invariants $\tb(\del F),\rot(\del F)$ are determined by $R_g,\rot_g$.
\end{lemma}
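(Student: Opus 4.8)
The plan is to identify $\partial F$, as a Legendrian knot, with a cycle of $G$ (or a boundary connect sum of cycles) and then to read off its classical invariants from the cycle invariants $\tb_g$ (equivalently $\overline{R}_g$) and $\rot_g$. Throughout I use that $\Sigma_g$ is a convex sphere with a single dividing curve, that $\tb_g$ is determined by the contact framing $\overline{R}_g$ underlying $R_g$, and that $\rot_g \in \ZZ^{|C(G)|}$ records the rotation number of \emph{every} cycle of $G$.

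First I would observe that the boundary walk of the face $F$ traces out a closed walk in $G$, and that after pushing $\partial F$ off into $F$ and making it Legendrian it is Legendrian isotopic to the knot obtained by realizing the corresponding cycle(s) of $G$, smoothing the graph at each vertex on the side facing $F$. When $G$ is $2$-connected near $F$ this walk is an embedded cycle $\gamma_F$, and the conclusion is then immediate: $\tb(\partial F) = \tb_g(\gamma_F)$ is a component of $\tb_g$, hence determined by $\overline{R}_g$, and $\rot(\partial F) = \rot_g(\gamma_F)$ is a component of $\rot_g$. The only care needed here is orientation bookkeeping — $\partial F$ is oriented as the boundary of $F$ — and checking that the pushoff-into-$F$ smoothing agrees with the smoothing used to define $\rot_g(\gamma_F)$, which it does because the two edges of $\gamma_F$ meeting at a vertex are consecutive in the cyclic order on the $F$ side.

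When $F$ meets a cut vertex or a cut edge the boundary walk is no longer embedded, and I would instead decompose $\partial F$ as an iterated boundary connect sum $\gamma_1 \natural \cdots \natural \gamma_r$ of embedded cycles joined at the repeated vertices. Applying Lemma \ref{lemma:del-connect-sum} repeatedly gives
\[
\tb(\partial F) = \sum_{i} \tb_g(\gamma_i), \qquad \rot(\partial F) = \sum_{i} \rot_g(\gamma_i) - \sum_{v} \sigma(v,\Sigma_g),
\]
where the last sum runs over the connect-sum vertices. The first identity is again determined by $\overline{R}_g$, and the cycle rotation numbers in the second are determined by $\rot_g$.

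The main obstacle is therefore the correction term $\sum_{v} \sigma(v,\Sigma_g)$: I must show the vertex signs at the connect-sum points are determined by $R_g$. The key point is that such a sign is a feature of the germ of the convex surface at the vertex, and this germ is pinned down by the ribbon. Concretely, the cut edges incident to such a vertex carry $\tw(\,\cdot\,,\Sigma_g)=0$ by Corollary \ref{cor:sigma-g}, so $R_g$ may be isotoped to lie in $\Sigma_g$ near $v$; the vertex then appears as an elliptic singularity of the characteristic foliation whose sign is read directly off the oriented ribbon $R_g$, which, having only positive tangencies, fixes the coorientation data. Together with Lemma \ref{lemma:ribbon-embedding}, which identifies the pair $(\Sigma_g, g(G))$ up to a diffeomorphism depending only on $R_g$, this shows each $\sigma(v,\Sigma_g)$ — and hence both $\tb(\partial F)$ and $\rot(\partial F)$ — depends only on $R_g$ and $\rot_g$. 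I expect verifying this vertex-sign claim, and the clean identification of the face pushoff with the realized cycle, to be the most delicate steps; the remainder is bookkeeping with Lemma \ref{lemma:del-connect-sum}.
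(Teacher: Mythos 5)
Your proposal is correct and follows essentially the same route as the paper: decompose $\del F$ as a boundary connect sum of cycles joined at cut vertices, apply Lemma \ref{lemma:del-connect-sum}, and observe that the vertex signs $\sigma(v,\Sigma_g)$ are determined by the oriented ribbon (the paper asserts this last point without proof; your germ argument via Lemma \ref{lemma:ribbon-embedding} supplies it). The only cosmetic difference is that the paper disposes of cut edges up front by contracting them --- legitimate since $\tw(e,\Sigma_g)=0$ by Corollary \ref{cor:sigma-g} --- rather than folding them into the connect-sum decomposition, which is cleaner since a boundary walk repeating an \emph{edge} is not literally a connect sum of cycles at repeated vertices.
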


\begin{proof}
Without loss of generality, we can assume that $F$ is not incident to any cut edges.  By Corollary \ref{cor:sigma-g}, we can assume that $\tw(e,\Sigma) = 0$ along each cut edge $e$.  Thus, contracting the edge $e$ does not change the Legendrian isotopy class of $\del F$.  

If $F$ is not incident to any cut edges, then the boundary of $F$ is the union of a collection of cycles $\gamma_1,\dots,\gamma_k$, identified at cut vertices of $G$.  Consequently, $\del F$ is the boundary connect sum $\gamma_1 \natural \dots \natural \gamma_k$.  So by Lemma \ref{lemma:del-connect-sum}, the classical invariants of $\del F$ can be computed from the classical invariants of $\gamma_1,\dots,\gamma_k$ and the signs of vertices, which are determined by $R_g,\rot_g$.
\end{proof}

The dividing set intersects $\del F$ in $2 \cdot \tb(\del F)$ points and intersects each edge $e$ of $\del F$ in $2 \cdot \tw(e,\Sigma_g)$ points.  Starting with a positive intersection point (with respect to the oriented intersection of $\del F$ and $\Gamma$), label the points $\del F \pitchfork \Gamma$ as $x_1,\dots, x_{2 \cdot \tb(\del F)}$.  The signs of the intersections alternate along $\del F$, so the sign of $x_i$ is $-(-1)^i$.  In addition, if a vertex $v$ lies along $\del F$ between $x_i$ and $x_{i+1}$, then $\sigma(v,\Sigma_g) = -(-1)^i$.

Let $g_1,g_2$ be two Legendrian embeddings of $G$ with the same invariants $R_g,\rot_g$.  Let $\Sigma_i = \Sigma_{g_i}$ be the convex sphere containing $g_i$ in its characteristic foliation and let $\Gamma_i$ be the dividing curve of $\Sigma_i$.  By Lemma \ref{lemma:ribbon-embedding}, there exists a diffeomorphism $j: (\Sigma_{1},g_1(G)) \rightarrow (\Sigma_{2},g_2(G))$.  Since $g_1,g_2$  have the same Legendrian ribbon, they have the same edge invariant $\tw(e,\Sigma) = -\frac{1}{2} \# g(e) \cap \Gamma$.  Thus, we can assume $j$ sends $g_1(G) \cap \Gamma_1$ to $g_2(G) \cap \Gamma_2$.  In addition, the Legendrian ribbon determines the sign of tangency of $\xi$ at each vertex.  Thus, if $F_1$ and $F_2$ are corresponding faces,  the map $j$ preserves the orientations of the points of $\del F_i \cap \Gamma_i$.

In order to apply Proposition \ref{prop:dividing-equals-isotopy} and conclude $g_1,g_2$ are ambient isotopic, we need to find sequences of bypasses on $\Sigma_1$ and $\Sigma_2$ so that $j(\Gamma_1) = \Gamma_2$.

The following lemma is our main tool for finding bypasses.

\begin{lemma}
\label{lemma:sphere-bypass}
Every potential arc of attachment $a$ on $S^2$ corresponds to a trivial bypass attachment either in front or in back.
\end{lemma}

\begin{proof}
Since $\xi_{std}$ is tight, Giroux's Criterion implies that the dividing set is connected.  Therefore, up to isotopy, there are two possible attaching arcs.  See Figure \ref{figure:sphere-bypass}.  Attaching a bypass disk in front along the first arc is trivial and attaching a bypass in back along the second arc is also trivial.   The Right-to-Life principle ensures that such bypasses exist.
\begin{figure}[htpb!]
\centering
\labellist
	\small\hair 2pt
	\pinlabel $a$ at 120 280
	\pinlabel $a$ at 120 40
	\pinlabel $\Gamma$ at 50 80
	\pinlabel $\Gamma$ at 50 230
	\pinlabel $\Gamma'$ at 600 80
	\pinlabel $\Gamma'$ at 600 230
\endlabellist
\includegraphics[width=.45\textwidth]{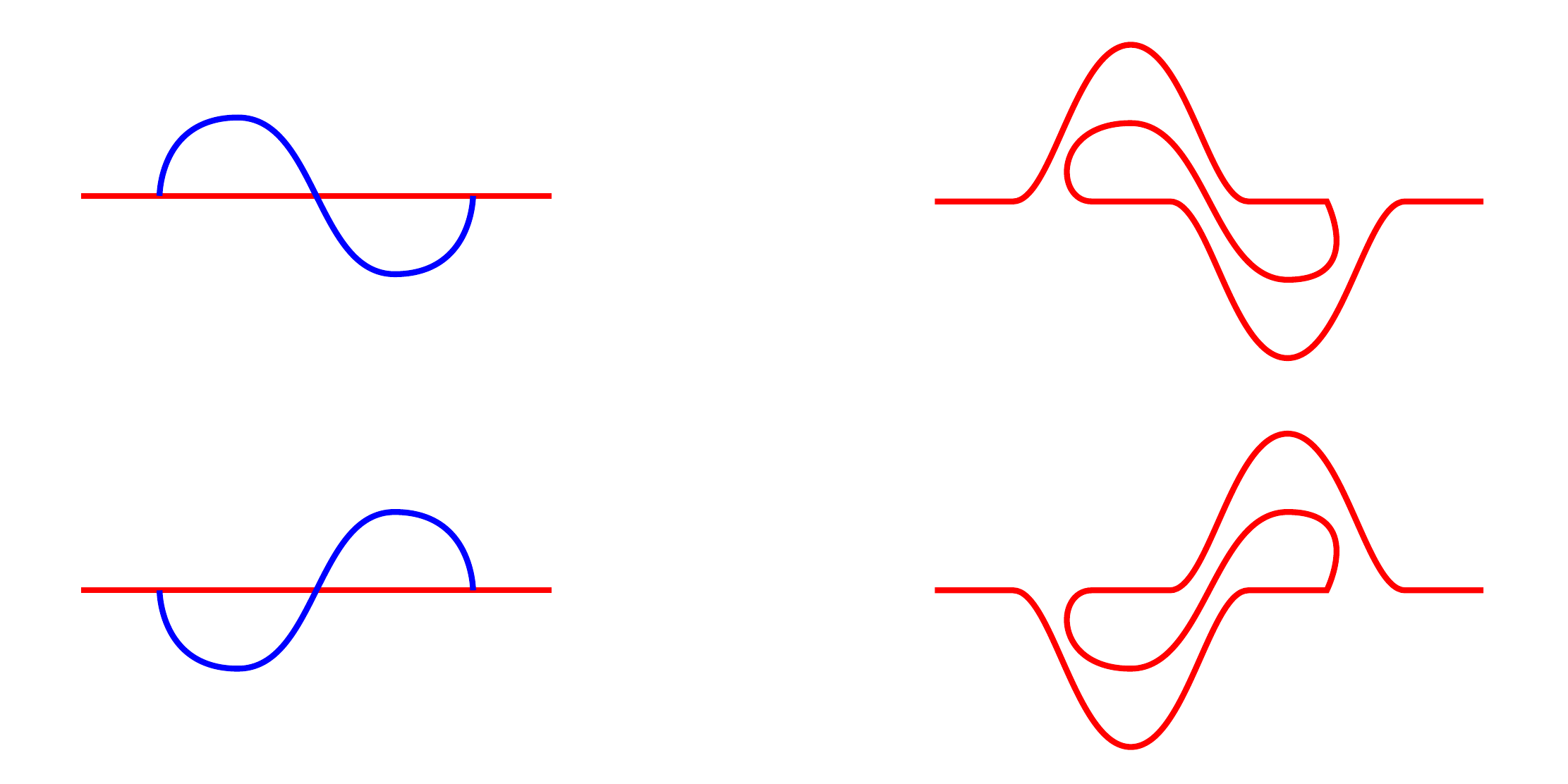}
\caption{Attaching a trivial bypass along the arc $a$ either in front the sphere (top line) or behind the sphere (bottom line)}
\label{figure:sphere-bypass}
\end{figure}
\end{proof}

\begin{proposition}
\label{prop:complete}
Let $g_1,g_2$ be trivial Legendrian embeddings of $G$ with the same invariants $R_g,\rot_g$.  Suppose that $g_i(G)$ lies on the convex sphere $\Sigma_{g_i}$ with dividing curve $\Gamma_i$.  Then 
\begin{enumerate}
\item Let $F_1,F_2$ be corresponding faces.  There exists sequences of bypasses  attached to $\Sigma_{g_1},\Sigma_{g_2}$ in the interiors of $F_1,F_2$ such that, after attaching these bypasses, the dividing sets in $F_1,F_2$ are isotopic rel boundary.
\item There exists sequences of bypasses on $\Sigma_{g_1}$ and $\Sigma_{g_2}$ such that, after attaching these bypasses, the dividing curves $\Gamma_1,\Gamma_2$ are isotopic rel $g_1(G),g_2(G)$.  
\item The Legendrians embeddings $g_1,g_2$ are isotopic.
\end{enumerate}
\end{proposition}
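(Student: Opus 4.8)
The plan is to upgrade the diffeomorphism $j \colon (\Sigma_{g_1}, g_1(G)) \to (\Sigma_{g_2}, g_2(G))$ supplied by Lemma \ref{lemma:ribbon-embedding} to a contactomorphism, by first arranging that $j$ also carries $\Gamma_1$ to $\Gamma_2$, and then invoking Proposition \ref{prop:dividing-equals-isotopy}. Recall that $j$ already sends $g_1(G) \cap \Gamma_1$ to $g_2(G) \cap \Gamma_2$ and matches the signs of these marked points, so the only discrepancy between the two dividing curves lives in the interiors of the faces. The three parts of the proposition are carried out in order, with Part (1) doing the real work.

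For Part (1), I would treat one pair of corresponding faces $F_1, F_2$ at a time. Each $F_i$ is a disk, and since $\xi_{std}$ is tight there are no contractible closed dividing curves, so $\Gamma_i \cap F_i$ is a disjoint union of arcs whose endpoints are the marked points $\partial F_i \cap \Gamma_i$ on the edges of $g_i(G)$; that is, a dividing set in $F_i$ is exactly a non-crossing matching of a fixed collection of signed boundary points. The two invariants I would track are the cyclic pattern of signs along $\partial F_i$, fixed by $R_g$, and the relative Euler number $\chi(F_i^+) - \chi(F_i^-)$, which equals $\rot(\partial F_i)$ and so is determined by $R_g,\rot_g$ via the lemma computing $\tb(\partial F)$ and $\rot(\partial F)$. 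Since $\partial F_1$ and $\partial F_2$ are Legendrian isotopic, these Euler numbers agree. The aim is then to show that any two non-crossing matchings sharing the same signed boundary points and the same relative Euler number are related by a sequence of bypasses supported in the interior of the face. Lemma \ref{lemma:sphere-bypass} hands us a bypass along every candidate attaching arc, so the substance is purely that interior bypasses \emph{connect} the two configurations: I would reduce each one to a normal form — boundary-parallel arcs arranged in a canonical nested order, the signed excess of positive over negative regions recording $\rot(\partial F_i)$ — and check that the available bypasses realize the elementary Euler-number-preserving swaps of adjacent arcs needed to reach it.

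Parts (2) and (3) are then largely bookkeeping. Interior bypasses are supported away from $g_i(G)$ and from the marked points on $\partial F_i$, so they move neither the graph nor the dividing data of neighboring faces; after performing the bypasses of Part (1) in every face, the per-face isotopies rel $\partial F_i$ patch together along the shared edges, where $j$ already matches the marked points, into a global isotopy taking $\Gamma_1$ to $j^{-1}(\Gamma_2)$ rel $g_1(G)$. Thus we may assume $j(\Gamma_1) = \Gamma_2$. The modified spheres still carry $g_i(G)$ in their characteristic foliations, so $j$ now sends the triple $(\Sigma_{g_1}, g_1(G), \Gamma_1)$ to $(\Sigma_{g_2}, g_2(G), \Gamma_2)$. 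Proposition \ref{prop:dividing-equals-isotopy}(1) upgrades $j$ to a contactomorphism of neighborhoods taking $g_1(G)$ to $g_2(G)$; since $\Sigma_{g_i} \simeq S^2$, part (2) of that proposition extends it to a contactomorphism of $S^3$; and part (3) converts this into a contact isotopy, giving $g_1 \simeq g_2$.

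The main obstacle is the combinatorial-geometric step buried in Part (1): proving that fixing the signed boundary points together with the relative Euler number $\rot(\partial F_i)$ pins the dividing set of a convex disk down to a single class under interior bypasses. Identifying the correct normal form and verifying that the bypasses furnished by Lemma \ref{lemma:sphere-bypass} generate precisely the Euler-number-preserving reorganizations of the arcs — no fewer (so that every two such matchings are connected) and no more (so that the rotation number is genuinely the only obstruction) — is the crux of the argument; once it is in place, the remainder is assembly and a direct application of Proposition \ref{prop:dividing-equals-isotopy}.
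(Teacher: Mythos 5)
Your overall skeleton matches the paper's: Part (1) is the real content, Part (2) is patching face by face, and Part (3) is the application of Proposition \ref{prop:dividing-equals-isotopy} exactly as you describe (Giroux flexibility, uniqueness of the tight ball, Eliashberg's weak homotopy equivalence). The gap is in the step you yourself flag as the crux, and it is genuine: you write that Lemma \ref{lemma:sphere-bypass} ``hands us a bypass along every candidate attaching arc,'' and then plan to use these bypasses to realize arbitrary Euler-number-preserving elementary swaps steering each matching to a normal form. But the lemma guarantees a bypass along a given arc \emph{either} in front \emph{or} in back --- not on a side of your choosing --- and the two sides effect different modifications of the dividing set (Figure \ref{figure:bypass-effect}). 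Which side carries the guaranteed (trivial) bypass is dictated by the global configuration of $\Gamma$ relative to the arc: on a sphere in tight $S^3$ the dividing set must stay connected, so only one of the two attachments can exist, and it is the trivial one furnished by Right-to-Life. Consequently you cannot prescribe the local reconfiguration inside a face, and your plan of driving a single dividing set to a canonical nested normal form by chosen swaps does not go through: an adversarial choice of sides can refuse every move you want to make on one sphere alone.

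The paper's proof is engineered precisely around this indeterminacy, which is why it looks so different at the core. It inducts on $|\tb(\partial F)|$ and works on \emph{both} spheres simultaneously, attaching candidate bypasses in $F_1$ and $F_2$ in tandem and case-splitting on the uncontrolled front/behind outcomes: if the outcomes cooperate one finds \emph{matched bypasses} (Part (A), which splits off a bigon by adding an edge and drops $|\tb|$ by one) or \emph{adjacent bypasses} (Part (B), where all four side-combinations are checked to produce matched ones); if they never cooperate, Part (C) shows each failed round forces a boundary-parallel arc giving a positive destabilization of $\partial F_1$ and a negative one of $\partial F_2$, whence $\rot(\partial F_1) \geq -n+1+2i$ and $\rot(\partial F_2) \leq n-1-2i$ after $i$ rounds --- contradicting $\rot(\partial F_1)=\rot(\partial F_2)$ once $i > \frac{n-1}{2}$. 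Note that the rotation number is used here \emph{quantitatively}, as a counter bounding the number of uncooperative rounds, not merely (as in your proposal) as a label ensuring two normal forms coincide. To repair your argument you would need to either prove you can control the attachment side (false in general) or replace the normal-form strategy with some termination argument of this kind; as written, the claim that ``the available bypasses realize the elementary Euler-number-preserving swaps'' is unsupported and is exactly where the proof fails.
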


\begin{remark}
Statement (1) of Proposition \ref{prop:complete} implies that the unknot is Legendrian simple.  However, the proof in \cite{Eliashberg_Fraser} is not sufficient for our purposes.  In particular, Eliashberg and Fraser eliminate positive elliptic singularities of the characteristic foliation along the boundary of a Seifert disk.  However, the vertices of a Legendrian graph occur at elliptic singularities of both signs and cannot be eliminated.
\end{remark}

\begin{proof}
Statement (2) follows by applying Statement (1) to each face and Statement (3) follows from Statement (2) by Proposition \ref{prop:dividing-equals-isotopy}.  Thus, we just need to prove Statement (1) of the proposition.

To prove Statement (1), we induct on $|\tb(\del F)|$.  If $\tb(F_1) = \tb(F_2) = -1$, then $\Gamma_i \cap \del F_i$ consists of two points and $\Gamma_i \cap F_i$ is a single arc that is unique up to isotopy.

\begin{figure}
\centering
\begin{subfigure}{.8\textwidth}
	\centering
	\labellist
		\large\hair 2pt
		\pinlabel $a$ at 160 80
		\pinlabel behind at 500 10
		\pinlabel front at 837 10
	\endlabellist
	\includegraphics[width=1\textwidth]{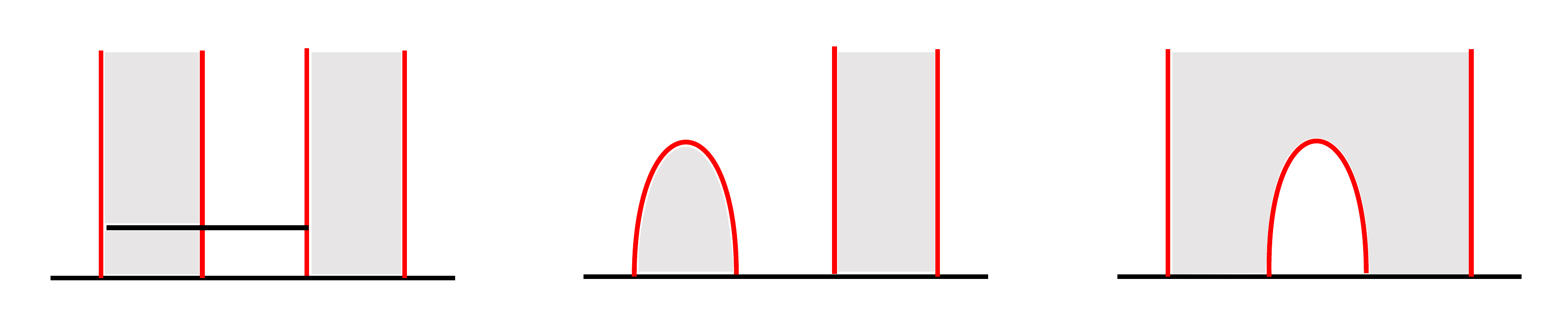}
	\caption{Attaching a trivial bypass along $\del F$ either in front (middle) or behind (right) produces a boundary-parallel dividing arc.}
	\label{subfig:bypass-options}
\end{subfigure}
\begin{subfigure}{.8\textwidth}
	\centering
	\labellist
		\large\hair 2pt
		\pinlabel $b$ at 160 80
		\pinlabel $a$ at 160 270
		\pinlabel behind at 500 10
		\pinlabel front at 837 10
		\pinlabel behind at 500 200
		\pinlabel front at 837 200
		\small\hair 2pt
		\pinlabel $y_{i}$ at 75 15
		\pinlabel $y_{i+1}$ at 140 15
		\pinlabel $y_{i+2}$ at 205 15
		\pinlabel $y_{i+3}$ at 270 15
		\pinlabel $x_{i}$ at 75 205
		\pinlabel $x_{i+1}$ at 140 205
		\pinlabel $x_{i+2}$ at 205 205
		\pinlabel $x_{i+3}$ at 270 205
	\endlabellist
	\includegraphics[width=1\textwidth]{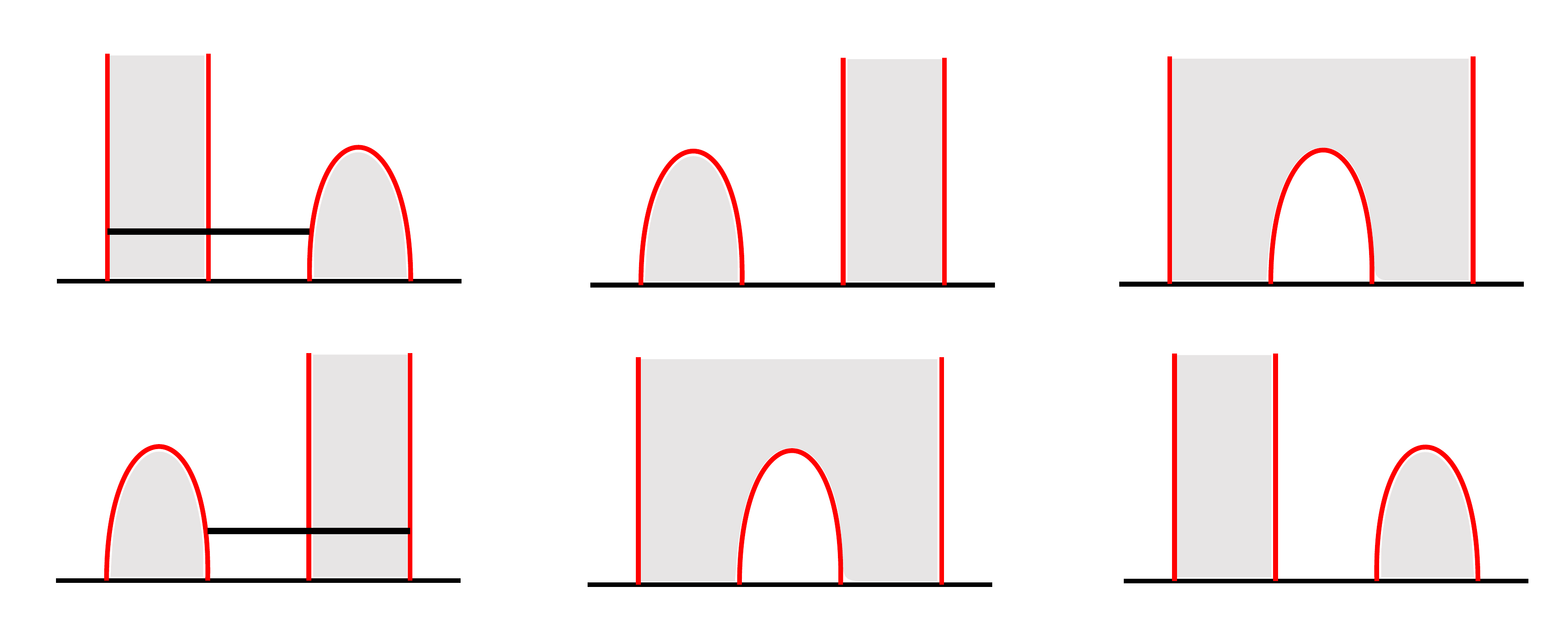}
	\caption{{\bf Adjacent bypasses:} If $\del F_1,\del F_2$ have adjacent bypasses, then after a sequence of bypass attachments the faces $F_1,F_2$ have matched bypasses.}
	\label{subfig:adjacent-options}
\end{subfigure}
\begin{subfigure}{1.0\textwidth}
	\centering
	\labellist
		\large\hair 2pt
		\pinlabel $b_2$ at 290 85
		\pinlabel $a_2$ at 290 285
		\pinlabel behind at 690 10
		\pinlabel front at 1160 10
		\pinlabel behind at 690 200
		\pinlabel front at 1160 200
		\small\hair 2pt
		\pinlabel $y_1$ at 75 15
		\pinlabel $y_2$ at 140 15
		\pinlabel $y_3$ at 205 15
		\pinlabel $y_4$ at 270 15
		\pinlabel $y_5$ at 335 15
		\pinlabel $y_6$ at 400 15
		\pinlabel $x_1$ at 75 220
		\pinlabel $x_2$ at 140 220
		\pinlabel $x_3$ at 205 220
		\pinlabel $x_4$ at 270 220
		\pinlabel $x_5$ at 335 220
		\pinlabel $x_6$ at 400 220
	\endlabellist
	\includegraphics[width=1\textwidth]{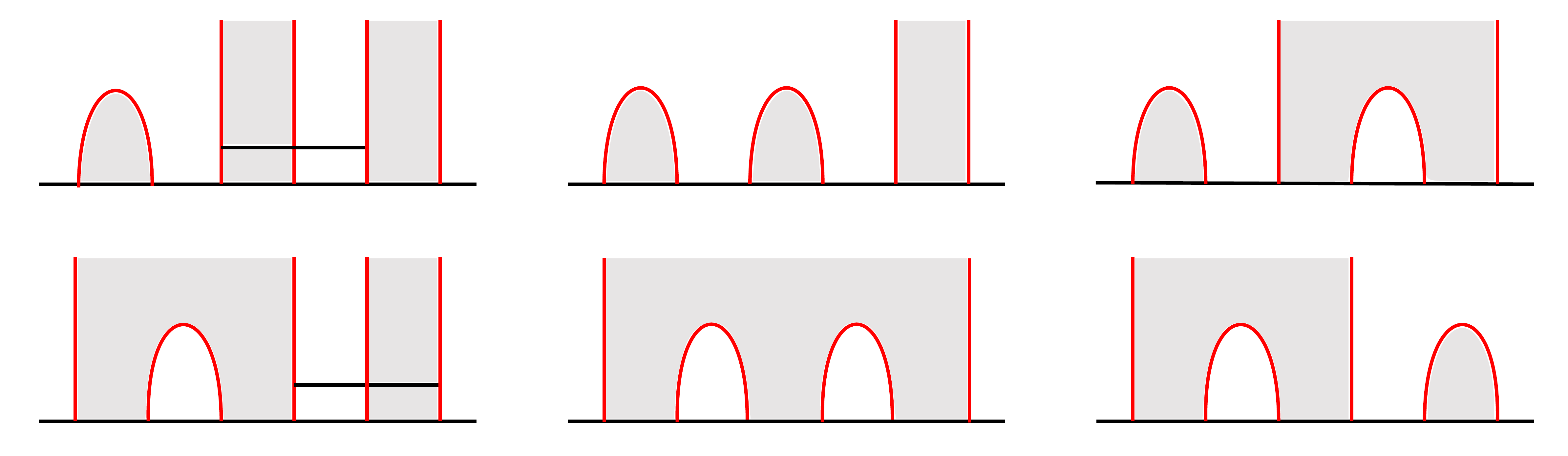}
	\caption{At the $2^{\text{nd}}$ step, attaching the bypasses along $a_2$ and $b_2$ produces either (1) matched bypasses, (2) adjacent bypasses, or (3) increases the sequence of positive bypasses in $F_1$ and negative bypasses in $F_2$.}
	\label{subfig:induction-bypass-options}
\end{subfigure}
\caption{The possible effects on the dividing sets of attaching bypasses near the boundary of $F_1,F_2$.  In each figure, the effect of attaching behind is in the middle and attaching in front is on the right.}
\end{figure}

Now suppose that $\tb(\del F_1) = \tb (\del F_2) = -n$ and the statement is true for faces $F$ with $\tb(F) = -n + 1$.  We prove the inductive step in 3 parts: (A)  if $F_1,F_2$ have {\it matched bypasses}, then we can reduce to the case when $\tb(\del F_1) = \tb(\del F_2) = -n +1$; (B) if $F_1,F_2$ have {\it adjacent bypasses}, then we can modify the dividing sets to find matched bypasses; and finally (C) we can always ensure that matched bypasses or adjacent bypasses occur.

{\bf Part (A):}  Suppose that there are corresponding pairs of points $x_i,x_{i+1}$ and $y_i,y_{i+1}$ such that the dividing sets in both disks connects these points.  We refer to these as {\it matched bypasses}.  Subdivide the edges between $x_{i-1},x_i$ and between $x_{i+1},x_{i+2}$.  Add an edge to $g_1(G)$ through $F_1$ connecting these new vertices and disjoint from $\Gamma_1$.  This separates $F_1$ into two faces, $F'_1$ and $D_1$, where $D_1$ contains the bypass.  Consequently, $\tb(F'_1) = -n + 1$, $\tb(D_1) = -1$ and $\rot(\del F'_1) = \rot(\del F_1) - (-1)^i$.  Add a corresponding edge to $g_2(G)$ through $F_2$.  The unknots $\del F'_1,\del F'_2$ have the same classical invariants and thus are Legendrian isotopic.  By induction, there exists sequences of bypasses attached within $F'_1,F'_2$ that equate the dividing sets in these subfaces.  After attaching these bypasses, the dividing sets in $F_1,F_2$ are now isotopic rel boundary as well.

{\bf Part (B):}  Suppose that the points $x_{i+2},x_{i+3}$ are connected by a dividing arc and that $y_{i}$ and $y_{i+1}$ are connected by a dividing arc, as in Figure \ref{subfig:adjacent-options}.  We refer to these as {\it adjacent bypasses}.  Then we can find a bypass along an arc $a$ in $ F_1$ from $x_i$ to $x_{i+2}$ and a bypass along an arc $b$ in $F_2$ from $y_{i+1}$ to $y_{i+3}$.  Each bypass must be attached either behind or in front and the potential effect is summarized in Figure \ref{subfig:bypass-options}.  

In all four cases, we can find a pair of matched bypasses and reduce to the previous step:  If the bypass along $a$ is attached in front, then attach this bypass and do not attach the bypass along $b$.  As a result, the dividing sets connect $x_i$ to $x_{i+1}$ and $y_i$ to $y_{i+1}$ and so we have found matched bypasses.  We can similarly find matched bypasses if the bypass along $b$ is attached behind.  Finally, suppose $a$ is attached behind and $b$ is attached in front.  After attaching both bypasses, the dividing sets connect $x_{i+1}$ to $x_{i+2}$ and $y_{i+1}$ to $y_{i+2}$ and we have found matched bypasses.

{\bf Part (C):}  We now show that after attaching a sequence of bypasses, we can ensure that one of the above two cases occurs.

Start by attaching bypasses along arcs in $F_1$ from $x_1$ to $x_3$ and in $F_2$ from $y_1$ to $y_3$ as shown in Figure \ref{subfig:bypass-options}.  
If they are both attached on the same side, then we have matching bypasses as in part (A).  The other possibility is that they are attached on opposite sides and without loss of generality we assume that the bypass on $F_1$ is attached behind and the bypass on $F_2$ is attached in front.

Now, attach a bypass along an arc $a_2$ in $F_1$ from $x_3$ to $x_5$ and along $b_2$ in $F_2$ from $y_4$ to $y_6$.  See Figure \ref{subfig:induction-bypass-options}.  If the bypass in $F_1$ is attached in front, the new dividing arc connects $x_4$ to $x_5$ and forms a bypass adjacent to the bypass formed by the arc connecting $y_2$ to $y_3$.  If the bypass in $F_1$ is attached behind and the bypass in $F_2$ is attached in front, then there are adjacent bypasses formed by arcs from $x_3$ to $x_4$ and from $y_5$ to $y_6$.  The only remaining option is that both are attached behind.

In this case, repeat the previous step by attaching bypasses along the arcs from $x_5$ to $x_7$ and from $y_6$ to $y_8$.  If this does not result in matched or adjacent bypasses, repeat again.  Continue attaching bypasses along the arcs from $x_{2i-1}$ to $x_{2i+1}$ and from $y_{2i}$ to $y_{2i+2}$.
Suppose that after $i$ pairs of bypass attachments, we have not found corresponding destabilizations.  
Then there are $i$ dividing arcs in $F_1$ connecting $x_{2j-1}$ to $x_{2j}$ and $i$ dividing arcs in $F_2$ connecting $y_{2j}$ to $y_{2j+1}$ for $j = 1,\dots,i$.  
This implies that $\del F_1$ admits $i$ positive destabilizations and $\del F_2$ admits $i$ negative destabilizations.  
So $\rot(\del F_1) \geq -n + 1 + 2i$ and $\rot(\del F_2) \leq  n - 1 - 2i$.  Moreover, since $\rot(\del F_1) = \rot(\del F_2)$ this implies that $2i \leq n - 1$.  
Thus if $i>\frac{n-1}{2}$ we reach a contradiction.  Since we can repeat the procedure $n-1$ times, at some point we must find matched or adjacent bypasses.
\end{proof}

\subsection{Legendrian graphs in $\cP_k$}
\label{sub:general}

Finally, we use the results of Subsection \ref{sub:base} to prove Theorem \ref{thrm:complete} in general.  

Let $g_1,g_2$ be Legendrian embeddings of $G$ in $\cP_k$ with the same invariants $R_g,\rot_g$.  To prove that $g_1,g_2$ are Legendrian isotopic we use the following strategy.  First, we find convex surfaces $S_1,S_2$ of genus $k$ containing $g_1(G),g_2(G)$ in their characteristic foliations.  Next, we extend $G$ to a graph $Q$ with and extend $g_1,g_2$ to Legendrian embeddings $q_1,q_2: Q \rightarrow (S^3,\xi_{std})$ such that $q_i(Q) \subset S_i$.  There is a subgraph $J \subset Q$ such that $j_1 := q_1|_J$ and $j_2 := q_2|_J$ are Legendrian embeddings in $\cP_0$ and such that $j_1,j_2$ have the same invariants $R_j,\rot_j$.  Thus, by Proposition \ref{prop:complete} there is a Legendrian isotopy $h$ such that $h \circ j_1 = j_2$.  Finally, we show that $h$ extends to a Legendrian isotopy of $q_1,q_2$ and therefore restricts to an isotopy of $g_1,g_2$. 

\subsubsection{The surface $S_g$}

Let $g$ be a Legendrian embedding of $G$ with $p(g)$ edges with $\tw(e,\Sigma_g)>0$.  We will call such edges {\it positive edges}.  Take the sphere $\Sigma_g$ and attach an unknotted 1-handle to $\Sigma_g$ in a neighborhood of each positive edge of $g$.  Let $S_g$ be the resulting genus $p(g)$ surface.  For each positive edge, let $m_e$ and $l_e$ denote the corresponding meridan and longitude of the handle.  We assume that $S_g$ is oriented so that $m_e$ bounds a disk behind $S_g$ and $l_e$ bounds a disk in front.  In addition, we assume that the pair has oriented intersection number $\langle m_e, l_e \rangle = 1$.  See Figure \ref{subfig:handle3Dml}.

Let $k_e = \lceil \tw(e,\Sigma_g) \rceil$.  We can isotope $S_g$ so that $G$ lies on $S_g$ and the edge $e$ crosses the handle and winds $k_e$-times positively around it.  Specifically, there is an orientation on $e$ so that its oriented intersection numbers with the meridian and longitude are $\langle m_e, e \rangle = 1$ and $\langle e, l_e \rangle = k_e$.  Index the points of $e \cap l_e = \{z_1,\dots,z_k\}$ according to their position on $e$.  See Figure \ref{subfig:handle3Dml}.

\begin{figure}[htpb!]
\centering
\begin{subfigure}{.8\textwidth}
	\centering
	\labellist
		\small\hair 2pt
		\pinlabel $e$ at 130 70
		\pinlabel $m_e$ at 250 205
		\pinlabel $l_e$ at 240 70
		\pinlabel $z_3$ at 202 105
		\pinlabel $z_2$ at 240 140
		\pinlabel $z_1$ at 270 120
	\endlabellist
	\includegraphics[width=1\textwidth]{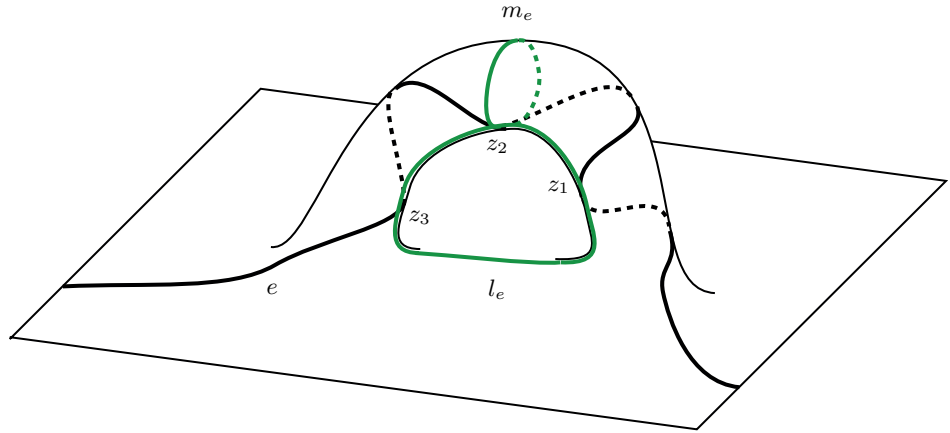}
	\caption{Attach a handle to $\Sigma_g$ with meridian $m_e$ and longitude $l_e$.  Isotope $S_g$ so that $e$ winds $k_e$-times around the handle.}
	\label{subfig:handle3Dml}
\end{subfigure}
\begin{subfigure}{.8\textwidth}
	\centering
	\labellist
		\small\hair 2pt
		\pinlabel $b_1$ at 160 65
		\pinlabel $a_1$ at 130 115
		\pinlabel $x_1$ at 110 65
		\pinlabel $y_1$ at 210 75
		\pinlabel $c_1$ at 240 92
		\pinlabel $c_2$ at 220 135
		\pinlabel $y_2$ at 260 73
		\pinlabel $a_2$ at 270 55
		\pinlabel $x_2$ at 295 65
		\pinlabel $d$ at 188 180
		\pinlabel $b_2$ at 370 75
	\endlabellist
	\includegraphics[width=1\textwidth]{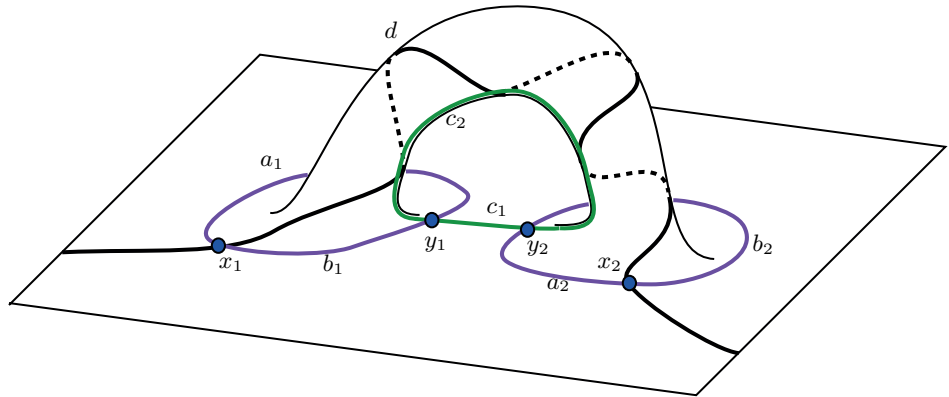}
	\caption{Choose points $x_1,y_1,x_2,y_2$ and arcs $a_1,b_1,a_2,b_2$.}
	\label{subfig:handle3DHgraph}
\end{subfigure}
\begin{subfigure}{.8\textwidth}
	\centering
	\includegraphics[width=1\textwidth]{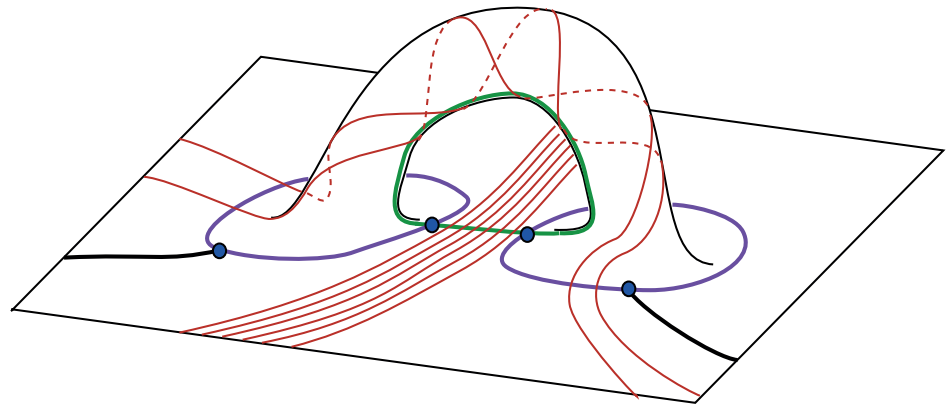}
	\caption{The dividing set near the handle.}
	\label{subfig:handle3Ddividing}
\end{subfigure}
\caption{The handle of the surface $S_g$ near the positive edge $e$.}
\end{figure}

The twisting number of $e$ relative to $S_g$ is now $\tw(e, \Sigma_g) - k_e$ which equals $-\frac{1}{2}$ or $0$.  Consequently, there is a $C^0$-small perturbation of $S_g$ fixing $g(G)$ so that $S_g$ is convex and contains $g(G)$ in its characteristic foliation.

Fix two points $y_1,y_2$ on the longitude $l_e$ such that there is an arc $c_2$ from $y_1$ to $y_2$ in $l_e$ that hits the points $z_1,\dots,z_{k_e}$ in order.  The other arc $c_1$ of $l_e$ is therefore disjoint from the edge $e$.  Index $y_1,y_2$ so that the oriented boundary of this arc is $y_1 - y_2$.  Furthermore, fix two points $x_1,x_2$ on $e$ such that the arc $d$ from $x_1$ to $x_2$ intersects the longitude $k_e$-times.

Finally, choose four arcs $a_1,b_1,a_2,b_2$  such that
\begin{enumerate}
\item the four arcs are mutually disjoint, disjoint from $g(G)$, and disjoint from $l_e$
\item the arcs have oriented boundaries $\del a_i = x_i - y_i$ and $\del b_i = y_i - x_i$
\item the loops $\{a_i \cup b_i\}$ are oriented meridians of the handle isotopic to $m_e$.
\end{enumerate}

See Figure \ref{subfig:handle3DHgraph}.

\begin{lemma}
\label{lemma:handle-intersections}
Let $S_g$ be the genus $k$ convex surface with dividing set $\Gamma$ and containing $g(G)$ in its characteristic foliation.  After possibly attaching some bypasses and isotoping the dividing set, we can assume that
\begin{enumerate}
\item $\# (d \cap \Gamma) = 0$
\item $\# (m_e \cap \Gamma) = 2$
\item $\# (a_i \cap \Gamma) = 2$ and $\#(b_i \cap \Gamma) = 0$
\item $\# (c_2 \cap \Gamma) = 2k_e - 2$
\item $\# (c_1 \cap \Gamma) = 2k_e$
\end{enumerate}
\end{lemma}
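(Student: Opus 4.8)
The plan is to normalize the dividing set $\Gamma$ near the handle in stages: first I control the Legendrian edge $e$, then the three parallel meridians ($m_e$ together with the loops $a_i \cup b_i$), and finally the longitude $l_e = c_1 \cup c_2$. Throughout I work on the convex surface $S_g$, on which $g(G)$ is Legendrian with $\tw(e,S_g) \in \{-\tfrac{1}{2},0\}$, and I attach bypasses freely: trivial ones exist by the Right-to-Life principle, and others are produced from outermost dividing arcs on compressing disks in the tight complement of $S_g$. Attaching such a bypass and re-convexifying changes $\Gamma$ but keeps $g(G)$ in the characteristic foliation, so all five counts may be adjusted simultaneously.

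\textbf{The arc $d$ (item 1).} Since $d$ is a subarc of the Legendrian edge $e$ and $\tw(e,S_g) = -\tfrac12 \#(e \cap \Gamma) \in \{-\tfrac12, 0\}$, we have $\#(e \cap \Gamma) \le 1$. If there is a single intersection point, I isotope $\Gamma$ to slide it along $e$ out of the handle region past an endpoint of $d$; this is possible since the single point of $e \cap \Gamma$ may be placed anywhere along $e$ by an isotopy of $\Gamma$ supported near $e$, and $d$ is a proper subarc. Hence $\#(d \cap \Gamma) = 0$.

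\textbf{The meridians (items 2, 3).} The curve $m_e$ bounds a compressing disk $D$ behind $S_g$ whose interior is disjoint from $g(G)$. After making $m_e$ Legendrian and $D$ convex, Giroux's Criterion and tightness force the dividing set of $D$ to be a union of arcs with no closed components; an outermost such arc cuts off a bypass, and attaching it along $D$ lowers $\#(m_e \cap \Gamma)$ by two. A convex disk has nonempty dividing set, so $\#(m_e \cap \Gamma) \ge 2$, and I reduce to exactly $2$. The same argument applies to the isotopic meridians $a_1 \cup b_1$ and $a_2 \cup b_2$, giving $\#((a_i \cup b_i) \cap \Gamma) = 2$; I then isotope $\Gamma$ so that both crossings of each meridian lie on $a_i$ and none on $b_i$, which is possible because the two strands of $\Gamma$ that traverse the handle (forced to run parallel to $e$ by item (1)) pass on the $a_i$-side of the points $y_i$, as in Figure \ref{subfig:handle3DHgraph}.

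\textbf{The longitude (items 4, 5) — the main obstacle.} This is the crux, precisely because the disk bounded by $l_e$ in front is punctured $k_e$ times by $e$ (as $\langle e,l_e\rangle = k_e$), so the clean compressing-disk argument used for $m_e$ is unavailable. Instead I use that items (1) and (2) pin down $\Gamma$ across the handle: with $\#(d \cap \Gamma)=0$ and $\#(m_e\cap\Gamma)=2$, exactly two strands of $\Gamma$ cross the handle, each winding $k_e$ times parallel to $e$. Each strand meets the longitude geometrically $k_e$ times, so on the handle $l_e$ meets $\Gamma$ in $2k_e$ points; a boundary bookkeeping near $y_1,y_2$ — where the strands are diverted onto the arcs $a_i$, consistent with item (3) — moves two of these crossings off $c_2$ and onto $c_1$, yielding $\#(c_2 \cap \Gamma) = 2k_e - 2$ and $\#(c_1\cap\Gamma) = 2k_e$. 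The work will be in verifying that $\Gamma$ can genuinely be placed in this standard parallel-to-$e$ form by bypasses and isotopy, and in confirming the $\pm 2$ split between the two longitude arcs; I expect to read off the resulting configuration from Figure \ref{subfig:handle3Ddividing}.
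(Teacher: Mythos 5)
Your handling of items (1)--(3) is essentially the paper's argument: the same sliding of the single point of $e \cap \Gamma$ off the subarc $d$, and the same reduction of $\#(m_e \cap \Gamma)$ to $2$ by attaching bypasses coming from outermost dividing arcs on the convex compressing disk behind $m_e$ (one small omission: you should note, as the paper does, that a boundary-parallel arc whose attachment region avoids the single point of $g(G) \cap m_e$ is guaranteed only while $\#(m_e \cap \Gamma) \geq 4$, which is exactly what makes the reduction terminate at $2$ rather than $0$). The gap is in items (4)--(5), which you assert rather than prove -- and you say so yourself. Two concrete problems. First, your stated reason for abandoning the compressing-disk method at the longitude is mistaken: the disk $D$ bounded by $l_e$ is \emph{not} punctured by $e$, since its interior is disjoint from $S_g$ and hence from $g(G)$; the edge meets $D$ only along $\partial D$ at the boundary points $z_1,\dots,z_{k_e} \subset c_2$. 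The paper's proof of (5) uses precisely this disk, with the $z_i$ entering only as constraints on which attachment arcs avoid the graph.

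Second, the claim that items (1)--(2) ``pin down'' $\Gamma$ as two strands running parallel to $e$ does not follow for $\#(c_1 \cap \Gamma)$: conditions (1)--(3) control $\Gamma$ only along $d$, $m_e$, $a_i$, $b_i$, while away from the handle the dividing set may meet $c_1$ in extra pairs of essential intersections that cannot be removed by isotopy alone (they can be blocked by $g(G)$ elsewhere on $S_g$). Establishing (5) therefore requires genuine contact-geometric input in both directions, and this is exactly what your sketch is missing. For the lower bound, the paper applies the Bennequin inequality $\tb(K) \leq -1$ to the Legendrian unknot $K = a_1 \cup c_1 \cup b_2 \cup d$ and compares the $S_g$-framing with the Seifert framing to force $\#(c_1 \cap \Gamma) \geq 2k_e$. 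For the reduction to equality, it works on the convex disk $D$ bounded by $l_e$: a tightness argument rules out the bad configurations (a dividing arc of $D$ returning to $c_1$ would, after a bypass in front, yield a meridian of the handle disjoint from $\Gamma$, which Legendrian-realizes to the boundary of an overtwisted disk), and the Imbalance Principle then produces boundary-parallel dividing arcs on $D$ whose attachment arcs lie entirely inside $c_1$, hence are disjoint from $g(G)$; each such bypass lowers $\#(c_1 \cap \Gamma)$ by $2$ until it equals $2k_e$. Since Bennequin, tightness, and imbalance are the only places the contact geometry (as opposed to surface combinatorics) enters the lemma, deferring them leaves the heart of the statement unproved; your ``read it off the figure'' plan would at best verify the answer for one standard position of $\Gamma$, not show that $\Gamma$ can always be brought to that position.
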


See Figure \ref{subfig:handle3Ddividing}.

\begin{proof}
{\bf Part (1):}  If $\tw(e,\Sigma_g)$ is a half-integer, then $\tw(e,S_g) = -\frac{1}{2}$ and so the dividing set $\Gamma$ must intersect $e$ exactly once.  We can isotope this intersection outside of the arc $d$ to lie between $x_2$ and $v_2$, the vertex at the head of the oriented edge $e$.  Otherwise, $\tw(e,S_g) = 0$ and so $\Gamma$ is disjoint from $e$. 

{\bf Part (2):}  The meridian $m_e$ must intersect the dividing set.  Otherwise, the meridian can be Legendrian realized and the compressing disk it bounds is an overtwisted disk, which violates tightness.  After a perturbation, we can assume that the meridian bounds a convex compressing disk $D$ with $\frac{1}{2}\#(m_e \cap \Gamma)$ dividing arcs.  Suppose that the dividing set on $D$ contains a boundary-parallel arc that is isotopic to an arc in $\del D = m_e$ disjoint from the graph $g(G)$.  Unless this arc is the only component of the dividing set of $D$, we can attach a bypass to the back of $S_g$.  The graph $g(G)$ still lies on the surface $S_g$ and this bypass attachment reduces $\#(m_e \cap \Gamma)$ by 2.  Furthermore, such a boundary-parallel arc must exist as long as $\# (m_e \cap \Gamma) \geq 4$, since $g(G)$ intersects $m_e$ in exactly one point.  Thus, we can continue attaching bypasses and reducing $\#(m_e \cap \Gamma)$ until it equals 2.

{\bf Part (3):}  Since $\Gamma$ intersects the meridian $m_e$ twice, we can assume it intersects the meridian $a_i \cup b_i$ exactly twice as well.  By an isotopy, we can assume these intersections lie in $a_i$ and not $b_i$.

{\bf Part (4):} This follows from parts (1) and (3), along with the fact that $\langle d, l_e \rangle = k_e$.

{\bf Part (5):}  Finally, let $K$ be the union of the four arcs $a_1,c_1 ,b_2,d$.  This is a closed loop that is an unknot in $S^3$.  The difference between the surface framing induced by $S_g$ and its nullhomologous framing is $k_e$, so from parts (1) and (2) we can deduce that
\begin{align*}
\tw(c_1,S_g) & = \tw(K,S_g) - \tw(a_1, S_g) - \tw(b_2, S_g) - \tw(d, S_g)\\
& = \tw(K,S_g) + 1 \\
& = \tb(K) + k_e + 1 \\
&\leq k_e
\end{align*}
where the last inequality follows since $\tb(K) \leq -1$.  Using the identity $\tw(c_1,S_g) = -\frac{1}{2} \#(c_1 \cap \Gamma)$, this implies that $c_1$ intersects the dividing set at least $2k_e$-times.

Conversely, choose a convex realization of the compression disk $D$ bounded by $l_e$.  By part (4) and the immediately preceeding argument, we have $\#( l_e \cap \Gamma) \geq 4k_e - 2$.  This implies that $D$ has $ \geq 2k_e - 1$ dividing arcs, with endpoints alternating along $l_e$ with the intersections of $l_e$ with $\Gamma$.    Up to isotopy, we can assume that $2k_e -1$ endpoints lie in $c_2$ and with the remaining endpoints in $c_1$.

We can furthermore conclude that no dividing arc connects two endpoints on $c_1$.  If so, this implies that we can find an arc of attachment along $l_e$ with midpoint at one of the intersection points of $c_1$ and $\Gamma$.  After attaching a bypass in front of $S_g$, we can then find a meridian of the 1-handle disjoint from the new dividing set.  This violates tightness since it can be Legendrian realized and therefore bounds an overtwisted compressing disk.

Now, suppose that $\#(c_1 \cap \Gamma) \geq  2k_e$.  Then by the Imbalance Principle, there must be a boundary-parallel dividing curve on the compressing disk $D$ with endpoints on $c_1$.  This corresponds to a bypass arc of attachment lying completely inside $c_1$.  In particular, the arc of attachment is disjoint from the graph $g(G)$ and hence we can attach this bypass and reduce $\#(c_1 \cap \Gamma)$ by 2, keeping $g(G)$ fixed.  Repeating this argument until $\#(c_1 \cap \Gamma) = 2k_e$ proves part (5).
\end{proof}

The loops $a_1 \cup b_1$ and $a_2 \cup b_2$ bound an annulus $A_e$ in $S_g$ and the arc $d$ is the restriction of $e$ to $A_e$.  Let $d_k$ be an embedded arc from $x_1$ to $x_2$ in $A_e$, homologous to $d + k[m_e]$ and isotoped to have minimal intersection with $\Gamma$.  By a perturbation of $S_g$, we can assume $d_k$ is Legendrian.  Furthermore, isotope $m_e$ to intersect $d_k$ exactly once, orient $d_k$ and $m_e$ so that the intersection point of $m_e \cap d_k$ is positive.  Let $\overline{d}_k$ be a Legendrian realization of the arc obtained by resolving the intersection point $m_e \cap d_k$.  Note that $\overline{d}_k$ is isotopic to $d_{k+1}$ in $A_e$ but not necessarily isotopic rel $\Gamma$.  See Figure \ref{fig:meridian-resolve}.

\begin{lemma}
\label{lemma:d-arc-replace}
Let $d_k$ denote a Legendrian arc in $A_e$ as described above.  
\begin{enumerate}
\item For all $k$, the arcs $d_k$ and $\overline{d}_k$ are Legendrian isotopic,
\item The arcs $b_1 \cup c_1 \cup b_2$ and $d_{-k_e }$ are Legendrian isotopic rel $x_1,x_2$
\item If $k \geq 0$, then $d$ is Legendrian isotopic rel boundary to $d_k$.
\item If $k < 0$, then $d$ is Legendrian isotopic rel boundary to $d_k$ after applying $-k$ positive and $-k$ negative stabilizations to $d$.
\end{enumerate}
In addition $\# \Gamma \cap d_k = 2 |k|$ for all $k \in \ZZ$.
\end{lemma}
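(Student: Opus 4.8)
The plan is to isolate two basic moves on Legendrian arcs in the annular neighborhood $A_e$ of the handle and then assemble all four statements, together with the intersection count, by a short induction on $k$. The first move is statement (1): resolving an arc against the meridian $m_e$ is a contact isotopy. The second is a Legendrian realization uniqueness statement: two Legendrian arcs in $A_e$ that are smoothly isotopic rel their endpoints and meet $\Gamma$ the same minimal number of times are Legendrian isotopic rel endpoints. Throughout I would use Lemma \ref{lemma:handle-intersections}, the identity $\tw(C,S_g) = -\tfrac{1}{2}\#(C\cap\Gamma)$, and Giroux's Flexibility Theorem.

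For statement (1), the crucial input is $\#(m_e\cap\Gamma)=2$ from Lemma \ref{lemma:handle-intersections}: a Legendrian realization of $m_e$ bounds a convex compressing disk $D$ behind $S_g$ carrying a single, boundary-parallel dividing arc. Since $m_e$ meets $d_k$ in one (positive) point, the oriented resolution $\overline{d}_k$ is obtained from $d_k$ by pushing a short sub-arc once around $m_e$, i.e.\ across $D$. Because $D$ has a single dividing arc, this is a trivial bypass move, which by the Right-to-Life principle is realized by a contact isotopy fixing the endpoints $x_1,x_2$; hence $d_k$ and $\overline{d}_k$ are Legendrian isotopic rel boundary for every $k$, even though their classes in $A_e$ differ by $[m_e]$. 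I would also record here that, since the resolution creates no new intersections with $\Gamma$, one has $\#(\overline{d}_k\cap\Gamma)=\#(d_k\cap\Gamma)+\#(m_e\cap\Gamma)=2|k|+2$.

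For statement (2) and the final count, I would compute the homology class of $b_1\cup c_1\cup b_2$ in $A_e$ to be $d-k_e[m_e]$, using $\langle e,l_e\rangle=k_e$; thus it is smoothly isotopic rel endpoints to $d_{-k_e}$. By Lemma \ref{lemma:handle-intersections} its intersection with $\Gamma$ is $\#(c_1\cap\Gamma)=2k_e$, which is the minimal number in this class, so the realization uniqueness move identifies it with $d_{-k_e}$ rel $x_1,x_2$. The same minimality input — that the spiralling multicurve $\Gamma\cap A_e$ determined by the $c_1,c_2$ counts meets any arc of class $d+k[m_e]$ in at least, hence after isotopy exactly, $2|k|$ points — gives the displayed identity $\#(\Gamma\cap d_k)=2|k|$.

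Statements (3) and (4) then follow by induction, comparing $\overline{d}_k$ (class $d_{k+1}$, intersection $2|k|+2$) with the minimal representative $d_{k+1}$ (intersection $2|k+1|$). For $k\ge 0$ the counts agree, so realization uniqueness gives $\overline{d}_k\simeq d_{k+1}$, and combined with statement (1) we get $d=d_0\simeq d_1\simeq\cdots\simeq d_k$, proving (3). For $k<0$ the count of $\overline{d}_k$ exceeds the minimum by exactly four; via the bypass dictionary of Subsection \ref{sub:bypasses} these four extra crossings form two bigons with $\Gamma$, one on each side of the dividing set, so $\overline{d}_k$ is $d_{k+1}$ with one positive and one negative stabilization. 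Hence $d_k\simeq S_+S_-(d_{k+1})$, and iterating down to $d_0=d$ yields $d_k$ from $d$ by $-k$ positive and $-k$ negative stabilizations, proving (4). I expect the main obstacle to be precisely this last sign identification — justifying the asymmetry that winding in the $+m_e$ direction is free while each unit of $-m_e$ winding produces a balanced pair of stabilizations — which rests on the chosen orientation of $S_g$ (with $m_e$ bounding behind and $l_e$ in front) and on correctly matching the two excess bigons to stabilization signs.
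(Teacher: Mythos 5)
Your architecture for (3), (4) and the intersection count matches the paper's: it too deduces these from statement (1) plus exactly your bigon count --- $\overline{d}_k$ is already in minimal position in the class of $d_{k+1}$ when $k \geq 0$, while for $k<0$ the four excess crossings form two trivial bigons of opposite sign (Figure \ref{fig:meridian-resolve}), giving one positive and one negative (de)stabilization per unit of winding. The genuine gap is in your proof of statement (1). The mechanism you cite --- that pushing a sub-arc of $d_k$ across the compressing disk $D$ is ``a trivial bypass move, which by the Right-to-Life principle is realized by a contact isotopy'' --- does not parse. Right-to-Life asserts the existence of a bypass half-disk along a trivial attachment arc on a convex surface, and attaching it modifies (trivially) the dividing set of that surface; sliding a Legendrian arc across a meridional compressing disk is not a bypass attachment: $\Gamma$ is unchanged, the moving object is the arc rather than the surface, and for $k \geq 0$ the resulting arc $\overline{d}_k$ is bigon-free, so the move is not a stabilization or destabilization in the surface dictionary of Subsection \ref{sub:bypasses} either. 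Indeed, since $d_k$ and $\overline{d}_k$ lie in different classes in $A_e$, any isotopy between them must leave the surface, which is precisely why some genuinely three-dimensional input is required. The paper supplies it by an explicit computation: a contactomorphism from a neighborhood of $D$ to the model $[-\pi,\pi]\times\RR^2$ in $(\RR^3, \kernel(dz-y\,dx))$ taking the handle to $\{y^2+z^2=1\}$, $d_k$ to the line $(x,0,1)$, and $\overline{d}_k$ to $(x,\sin x,-\cos x)$, joined by the explicit Legendrian family $a_t=(x,t\sin x,-t\cos x)$. Your hypothesis $\#(m_e\cap\Gamma)=2$ from Lemma \ref{lemma:handle-intersections} is the right one --- it is what makes the neighborhood of $D$ standard --- but to close the gap you need either this model computation or an appeal to the triviality of connected sum with the $\tb=-1$, $\rot=0$ unknot; ``trivial bypass plus Right-to-Life'' does not deliver it.

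Your statement (2) takes a genuinely different route from the paper, and it is worth recording the trade-off. The paper does not argue by minimal position: it attaches a convex compressing disk along a pushoff of $l_e$, rounds the corner, and reads off that the smoothed dividing set consists of $2k_e$ arcs plus one boundary-parallel arc enclosing a bigon of sign $-\sigma(v_1,\Sigma_g)$, so that $d_{-k_e}$ is a destabilization of $b_1\cup c_1\cup a_2$ of exactly the sign produced by trading $b_2$ for $a_2$. Your argument --- same relative class $d-k_e[m_e]$, same minimal count $2k_e$, hence Legendrian isotopic rel endpoints --- is shorter, but it silently uses two facts you should spell out: uniqueness of minimal position of an arc rel endpoints with respect to the multicurve $\Gamma\cap A_e$ (an innermost-bigon argument on the annulus), and a Legendrian realization/flexibility statement rel endpoints upgrading a $\Gamma$-preserving surface isotopy to a contact isotopy. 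Both are standard, so this part is an acceptable alternative; note, however, that the paper's corner-rounding computation also produces the sign bookkeeping $-\sigma(v_1,\Sigma_g)$ that is reused later in Lemma \ref{lemma:same-invts}, which your minimal-position argument does not yield.
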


\begin{proof}
Let $\alpha = dz - y dx$ be the standard contact form on $\RR^3$.  Let $D$ be a compressing disk bounded by $m_e$.  After possibly a perturbation of $S_g$ and $D$, we can find a contactmorphism between $\nu(D)$ and  $[-\pi,\pi] \times \RR^2 \in \RR^3$ such that: (1) the handle is identified with the surface $A = \{y^2 + z^2 = 1\}$, the meridian $m_e$ is identified with the loop $A \cap \{x = 0\}$, the arc $d_k$ is identified with the line $(x,0,1)$, and $\overline{d}_k$ is identified with the curve $(x,\sin x, - \cos x)$.  Then for $t \in [0,1]$ there is a family $a_t$ of Legendrian curves $a_t = (x,t \sin x, -t \cos x)$.  This gives a Legendrian isotopy between $d_k$ and $\overline{d}_k$ and proves Part (1).

For Part (2), first note that replacing $b_2$ with $a_2$ corresponds to a stabilization of sign $-\sigma(v_1,\Sigma_g)$.  This is true since there is a path from $x_2$ to $v_2$ that crosses $\Gamma$ an even number of times.  To prove the statement, we will show that the arc $d_{-k_e}$ is also destabilization of the arc $b_1 \cup c_1 \cup a_2$ of sign $-\sigma(v_1,\Sigma_g)$.  Attach a compressing disk along a pushoff of $l_e$.  See Figure \ref{subfig:disk-smoothA}.  Recall from Lemma \ref{lemma:handle-intersections} that the dividing set on the compressing disk is $2k_e - 1$ arcs from $c_1$ to $c_2$.  After smoothing the corners, the dividing set consists of $2k_e$ arcs from $d_{-k_e}$ to $b_1 \cup c_1 \cup a_2$ and one arc connecting $c_1$ to itself, which encloses a bigon of sign $-\sigma(v_1,\Sigma_g)$.  See Figure \ref{subfig:disk-smoothB}.  

Part (3) follows from Part (1) since $\overline{d}_k$ is Legendrian isotopic to $d_{k+1}$ for $k \geq 0$.  Part (4) also follows from Part (1) since if $k < 0$ then $\Gamma$ and $\overline{d}_k$ form two trivial bigons, one positive and one negative.  See Figure \ref{fig:meridian-resolve}.  After removing these, which corresponds to a positive destabilization and a negative destabilization, the arc is isotopic rel $\Gamma$ to $d_{k+1}$.
\end{proof}

\begin{figure}[htpb!]
\centering
\labellist
	\small\hair 2pt
	\pinlabel $m$ at 40 120
	\pinlabel $\Gamma$ at 100 70
	\pinlabel $d_{k}$ at 100 180
	\pinlabel $\overline{d}_k$ at 280 180
	\pinlabel $m$ at 420 120
	\pinlabel $\Gamma$ at 450 40
	\pinlabel $d_k$ at 505 150
	\pinlabel $\overline{d}_k$ at 690 150
\endlabellist
\includegraphics[width=.8\textwidth]{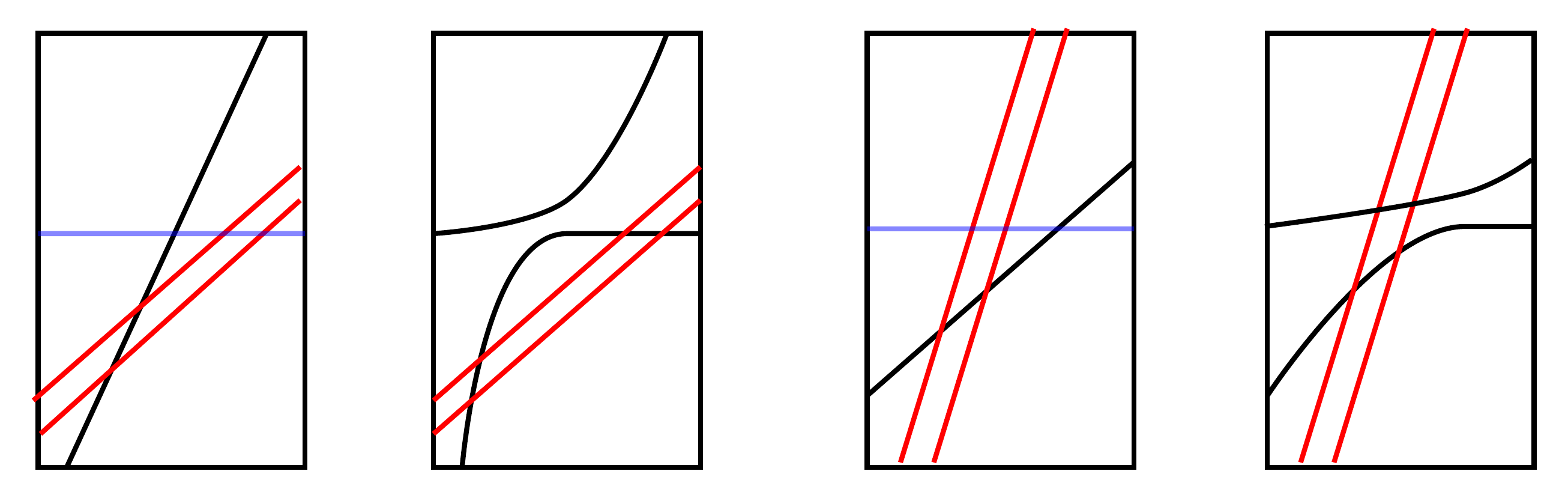}
\caption{Resolve the intersection of $d_k$ with the meridian $m$ yields the arc $\overline{d}_k$.  The case of $k < 0$ is on the left and $k > 0$ is on the right.}
\label{fig:meridian-resolve}
\end{figure}

\begin{figure}[htpb!]
\centering
\centering
\begin{subfigure}{.8\textwidth}
	\centering
	\labellist
		\small\hair 2pt
		\pinlabel $b_1$ at 160 82
		\pinlabel $a_2$ at 270 68
		\pinlabel $d_{-k_e}$ at 251 188
		\pinlabel $l_e$ at 293 120
	\endlabellist
	\includegraphics[width=1\textwidth]{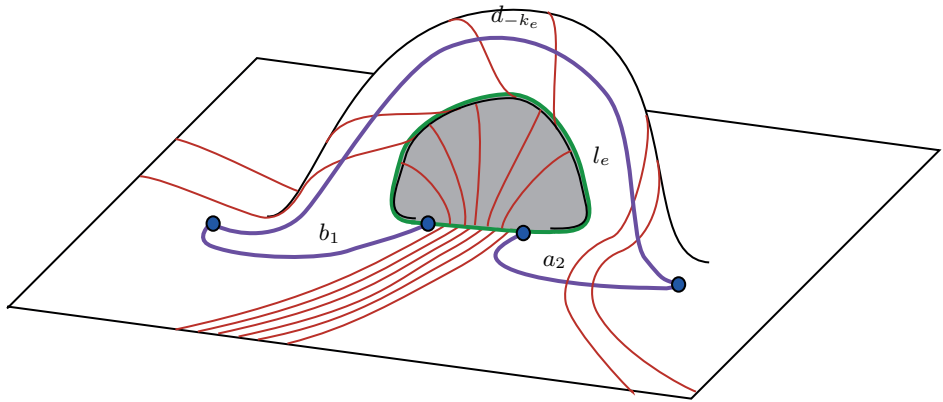}
	\caption{The dividing curves on $S_g$ and a longitudinal compressing disk. }
	\label{subfig:disk-smoothA}
\end{subfigure}
\begin{subfigure}{.8\textwidth}
	\centering
	\labellist
		\small\hair 2pt
		\pinlabel $b_1$ at 160 84
		\pinlabel $a_2$ at 270 73
		\pinlabel $d_{-k_e}$ at 251 192
		
	\endlabellist
	\includegraphics[width=1\textwidth]{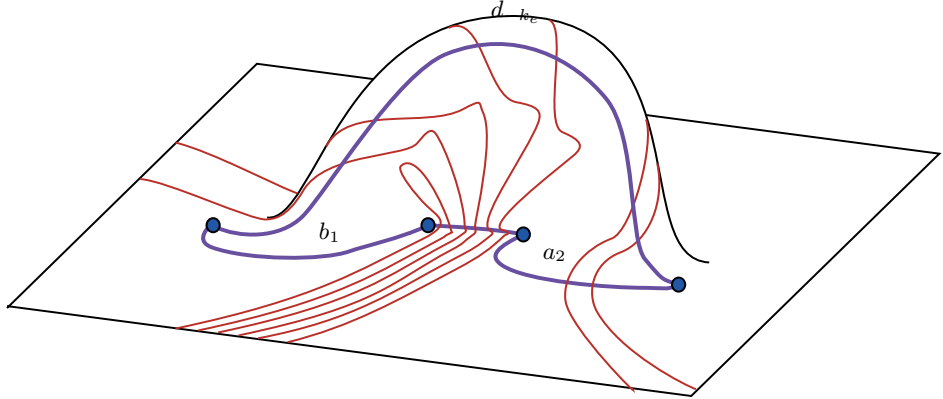}
	\caption{The dividing set after attaching the compressing disk.}
	\label{subfig:disk-smoothB}
\end{subfigure}
\caption{The arc $d_{-k_e}$ is Legendrian isotopic to a destabilization of $b_1 \cup c_1 \cup a_2$.}
\end{figure}

\begin{lemma}
\label{lemma:unknot-longitude}
Fix $k \geq 0$ and arcs $f_1 \in \{a_1,b_1\}$ and $f_2 \in \{a_2,b_2\}$.  Then the cycle $\gamma := d_k \cup f_1 \cup c_1 \cup f_2$ is a Legendrian unknot with $\tb(\gamma) = -1$ and $\rot(\gamma) = 0$.
\end{lemma}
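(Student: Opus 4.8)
The plan is to strip away the two inessential dependences first---the parameter $k$ and the smooth knot type---and then to pin down the two classical invariants, deducing $\rot(\gamma)=0$ from $\tb(\gamma)=-1$ via the Bennequin inequality so that the only real computation is of the Thurston-Bennequin number. To begin, I would reduce to the case $k=0$. By Lemma~\ref{lemma:d-arc-replace}(3) the arc $d_k$ is Legendrian isotopic rel its endpoints $x_1,x_2$ to $d=d_0$ for every $k\geq 0$; carrying out this isotopy while holding $f_1,c_1,f_2$ fixed shows that $\gamma=d_k\cup f_1\cup c_1\cup f_2$ is Legendrian isotopic to $d\cup f_1\cup c_1\cup f_2$, so it suffices to treat $k=0$.

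Next I would show that $\gamma$ is smoothly unknotted and record the resulting upper bound on $\tb$. The arc $f_1\cup d\cup f_2$ runs from $y_1$ through $x_1,x_2$ to $y_2$ and crosses the annulus $A_e$ exactly once in the longitudinal direction, just as the complementary longitude arc $c_2$ does. Since the $1$-handle carrying $A_e$ is attached in an unknotted fashion, these two arcs are isotopic rel $\{y_1,y_2\}$ in $S^3$ (the differing meridional windings are absorbed by pushing through the solid handle). Performing this isotopy identifies $\gamma$ with $c_1\cup c_2=l_e$, which bounds the compressing disk that $l_e$ bounds in front of $S_g$. Hence $\gamma$ is an unknot, and the Bennequin inequality $\tb(\gamma)+|\rot(\gamma)|\leq -1$ already gives $\tb(\gamma)\leq -1$; moreover, once $\tb(\gamma)=-1$ is proved, this same inequality forces $\rot(\gamma)=0$ with no further work.

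The substance is therefore the lower bound $\tb(\gamma)\geq -1$. I would produce a convex Seifert disk $\Delta$ for $\gamma$ from the longitudinal compressing disk already analyzed in the proof of Lemma~\ref{lemma:d-arc-replace}(2) (Figures~\ref{subfig:disk-smoothA} and \ref{subfig:disk-smoothB}), whose dividing set near the handle is understood from Lemma~\ref{lemma:handle-intersections}. After smoothing corners by the gluing rule of Subsection~\ref{sub:convex}, I would use tightness---any meridian or compressing disk disjoint from $\Gamma$ would be overtwisted---together with the Right-to-Life and Imbalance principles to strip off boundary-parallel dividing arcs until a single dividing arc remains. This yields $\#(\gamma\cap\Gamma_\Delta)=2$, hence $\tb(\gamma)=-\tfrac12\#(\gamma\cap\Gamma_\Delta)=-1$, and simultaneously $\chi(\Delta_+)=\chi(\Delta_-)=1$, giving $\rot(\gamma)=\chi(\Delta_+)-\chi(\Delta_-)=0$ directly. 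As a consistency check that also explains the uniformity over the four choices, note that Lemmas~\ref{lemma:handle-intersections} and \ref{lemma:d-arc-replace} give $\#(\gamma\cap\Gamma)=2k_e+2\,\#\{\,i:f_i=a_i\,\}$ on $S_g$ itself, so $\tw(\gamma,S_g)$ depends on the choices; the point is that the defect between the surface framing from $S_g$ and the Seifert framing compensates exactly, switching $b_i\leftrightarrow a_i$ raising the surface framing by one precisely as it raises $-\tfrac12\#(\gamma\cap\Gamma)$ by one.

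The main obstacle is this last step: controlling the dividing set of $\Delta$ so that it reduces to a single arc uniformly in the winding $k_e$ and in all four choices of $(f_1,f_2)$. The delicate point is reconciling the bookkeeping with Lemma~\ref{lemma:d-arc-replace}(2), where replacing $b_i$ by $a_i$ is recorded as a \emph{stabilization} of an arc relative to the compressing-disk framing; for the closed cycle $\gamma$ this is not a stabilization, because the Seifert framing itself changes, and one must verify that the intersection count and the framing defect cancel rather than accumulate. Handling this cancellation cleanly---either through the convex-disk reduction above or through a direct computation of the surface-framing defect as $k_e-1+\#\{i:f_i=a_i\}$---is where the care is required; everything else is formal.
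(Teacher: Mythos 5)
Your plan rests on an invalid move at exactly the point you identify as ``the substance.'' Having built a convex Seifert disk $\Delta$ for $\gamma$, you propose to use Right-to-Life and the Imbalance Principle to ``strip off boundary-parallel dividing arcs until a single dividing arc remains,'' concluding $\#(\gamma\cap\Gamma_\Delta)=2$. But for a \emph{fixed} Legendrian knot $\gamma$, the count $\#(\gamma\cap\Gamma_\Delta)=-2\tb(\gamma)$ is an invariant of $\gamma$, not of the Seifert surface: eliminating a boundary-parallel dividing arc by a bypass attached along $\gamma$ is precisely a destabilization (this is the correspondence of Subsection \ref{sub:bypasses} and Figure \ref{fig:bypass-disk}), which replaces $\gamma$ by a different Legendrian knot with $\tb$ increased by one. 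So ``reducing to a single arc'' presupposes $\tb(\gamma)=-1$ rather than proving it; the step is circular, and the Imbalance Principle anyway only yields bypasses under a twisting inequality you have not established. What is needed instead is a direct count, which is how the paper argues: by Lemmas \ref{lemma:handle-intersections} and \ref{lemma:d-arc-replace}, $\tw(\gamma,S_g)=-\tfrac{1}{2}\#(\gamma\cap\Gamma)=-(k+k_e)$ when $f_i=b_i$, while $\gamma$ is homologous to $[l_e]+(k+k_e-1)[m_e]$ on the handle, so the surface framing from $S_g$ differs from the Seifert framing by $k+k_e-1$; hence $\tb(\gamma)=-(k+k_e)+(k+k_e-1)=-1$. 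Replacing $b_i$ by $a_i$ adds $1$ to the framing defect and $2$ to the intersection count, so the two effects cancel.

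Ironically, your closing ``consistency check''---the framing defect $k_e-1+\#\{i: f_i=a_i\}$ balancing the change in $-\tfrac{1}{2}\#(\gamma\cap\Gamma)$---\emph{is} the paper's entire proof; promoted from a sanity check to the main argument (and with the $2k$ term restored, since the count is uniform in $k$), it makes both the convex-disk reduction and your preliminary reduction to $k=0$ unnecessary. That reduction also has its own gap as stated: Lemma \ref{lemma:d-arc-replace}(3) gives a Legendrian isotopy of the arc $d_k$ rel endpoints, but to conclude that $\gamma$ itself is Legendrian isotopic to $d\cup f_1\cup c_1\cup f_2$ you must arrange the isotopy to avoid $f_1\cup c_1\cup f_2$ (the model isotopy in the proof of Lemma \ref{lemma:d-arc-replace}(1) is supported near the meridional compressing disk, so this can be arranged, but it needs saying). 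Your remaining steps---the smooth unknottedness of $\gamma$ and the deduction of $\rot(\gamma)=0$ from $\tb(\gamma)=-1$ via the Bennequin inequality---are correct and agree with the paper's (largely implicit) treatment of those points.
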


\begin{proof}
Consider the cycle $\gamma = d_k \cup b_1 \cup c_1 \cup b_2$.  The curve $\gamma$ is homologous to $[l_e] + (k + k_e - 1)[m_e]$ and therefore the difference between the surface framing and the nullhomologous framing is $k + k_e - 1$.  In addition, $\# (\gamma \cap \Gamma) = 2k + 2k_e$ by Lemmas \ref{lemma:handle-intersections} and \ref{lemma:d-arc-replace}.  Thus, $\tb(\gamma) = -\frac{1}{2}(2k + 2k_e) + k + k_e - 1 = -1$ and this further implies that $\rot(\gamma) = 0$.  

Replacing $b_i$ with $a_i$ increases the surface framing by 1 and increases $\# (\gamma \cap \Gamma)$ by 2, preserving $\tb(\gamma) = -1$ and $\rot(\gamma) = 0$.
\end{proof}

\subsubsection{The Legendrian graphs $q,j$}
\label{subsub:qj}

Now, using the surface $S_g$, we define two new abstract graphs $Q,J$ and Legendrian embeddings $q: Q \rightarrow (S^3,\xi_{std})$ and $j: J \rightarrow (S^3,\xi_{std})$.

First, we define a genus 0 convex surface $\Sigma_j$ by surgering the handles of $S_g$.  For each handle of $S_g$, we pick two Legendrian meridians according to the sign of $\sigma(v_1,\Sigma_g)$ as follows:
\begin{enumerate}
\item If $\sigma(v_1,\Sigma_g)$ is positive, set $D_e = d$ and set $X_i = x_i$ and $m_i = a_i \cup b_i$ for $i = 1,2$.
\item If $\sigma(v_1,\Sigma_g)$ is negative, replace the arc $d$ with $d_1$.  The arc $d_1$ intersects the dividing set $\Gamma$ twice, separating $d_1$ into three segments: two in $S^-_g$ and on in $S^+_g$.  Let $X_1,X_2$ be points on $d_1$ in $S^+_g$ and let $D_e$ be the subarc of $d_1$ connecting $X_1,X_2$.  Now pick two meridians $m_1,m_2$ of the handle that intersect $d_1$ at $X_1,X_2$, respectively and that intersect $\Gamma$ twice each.
\end{enumerate}

In both cases, the meridians can be Legendrian-realized as unknots with $\tb = -1$.  Remove the annulus bounded by $m_1,m_2$ and replace it with two convex compressing disks bounded by $m_1$ and $m_2$.  Let $\Sigma_j$ denote this surgered surface and after a perturbation, we can assume $\Sigma_j$ is convex.

Now, we define a graph $j: J \rightarrow (S^3,\xi_{std})$ with image on $\Sigma_j$.  For each positive edge $e$, define the embedded graph $J_e$ as follows:
\begin{enumerate}
\item include the subset of $g(e)$ lying on $\Sigma_j$ and the arcs $a_1,b_1,a_2,b_2$ and $c_2$. 
\item if $\sigma(v_1)$ is negative, include the arc $r_1$ along $d_1$ from $x_1$ to $X_1$, the arc $r_2$ along $d_1$ from $X_2$ to $x_2$, and the meridians $m_1,m_2$.  In addition, include two edges $r'_1,r''_1$ parallel to $r_1$ and $r'_2,r''_2$ parallel to $r_2$, one on each side in $\Sigma_j$.
\item include two edges $c'_2,c''_2$ parallel to $c_2$, one on each side in $\Sigma_j$.
\item if $\sigma(v_2,\Sigma_g) \neq \sigma(v_1,\Sigma_g)$, include two parallel arcs of the arc in $e$ from $x_2$ to $v_2$, one on each side.
\end{enumerate}

Let $j(J)$ be the Legendrian graph obtained by replacing $e$ with $J_e$ for each positive edge $e$.  See Figure \ref{figure:Je} for the graph $J_e$ with the vertices $v_1,v_2$ have signs $\sigma(v_1) = -1$ and $\sigma(v_2) = +1$.  For other cases, the graph $J_e$ is a subgraph of the graph in Figure \ref{figure:Je}.

\begin{figure}
\centering
\labellist
	\small\hair 2pt
	\pinlabel $y_1$ at 235 90
	\pinlabel $a_1$ at 165 143
	\pinlabel $b_1$ at 165 15
	\pinlabel $r_1$ at 125 70
	\pinlabel $x_1$ at 95 90
	\pinlabel $X_1$ at 140 92
	\pinlabel $v_1$ at 35 65
	\pinlabel $m_1$ at 193 80
	\pinlabel $c_1$ at 260 70
	\pinlabel $y_2$ at 375 90
	\pinlabel $m_2$ at 417 80
	\pinlabel $X_2$ at 468 70
	\pinlabel $r_2$ at 485 90
	\pinlabel $x_2$ at 515 90
	\pinlabel $v_2$ at 577 65
	\pinlabel $a_2$ at 445 15
	\pinlabel $b_2$ at 445 145
\endlabellist
\includegraphics[width=1\textwidth]{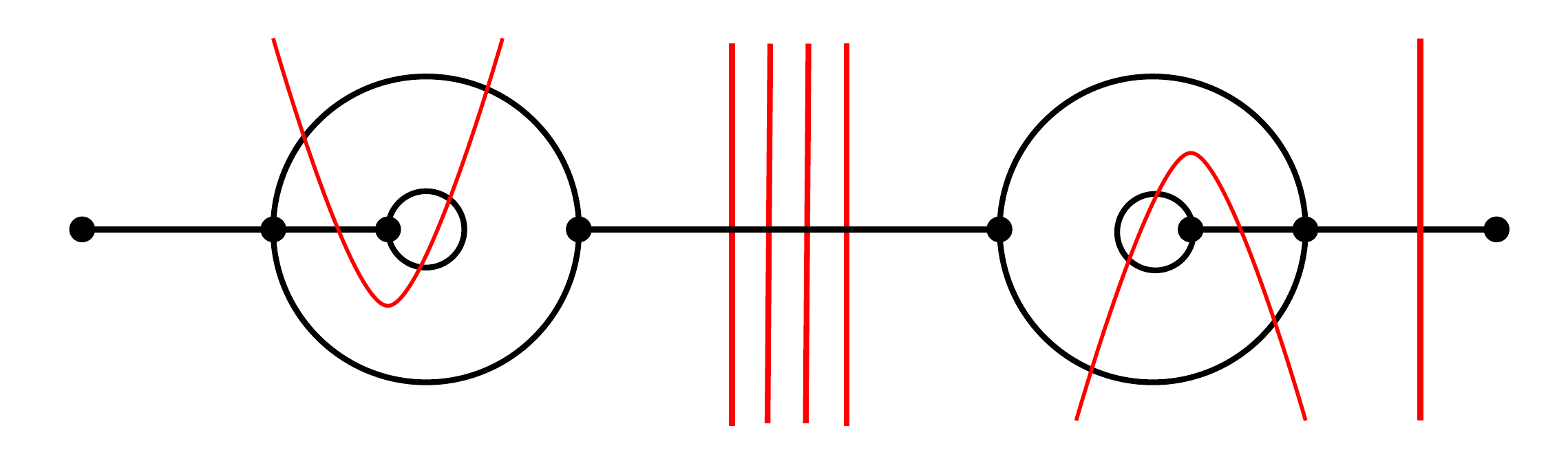}
\caption{The graph $J_e$ when $\sigma(v_1) = -1$ and $\sigma(v_2) = +1$.   For the sake of clarity, the parallel edges are left out.  If $\sigma(v_1) = +1$, remove the edges $r_1,m_1,r_2,m_2$.  If $\sigma(v_2) = \sigma(v_1)$, then $\Gamma$ does not intersect the edge connecting $x_2,v_2$.}
\label{figure:Je}
\end{figure}

\begin{lemma}
The graph $j: J \rightarrow (S^3,\xi_{std})$ can be Legendrian realized on $\Sigma_j$.  In addition, the convex surface $\Sigma_j$ satisfies the conclusions of Corollary \ref{cor:sigma-g}.
\end{lemma}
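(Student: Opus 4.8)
The plan is to establish the two assertions separately. Legendrian realizability will follow from the realization principle of \cite{Kanda,Honda1} once I check that $j(J)$ is nonisolating on $\Sigma_j$, while the conclusions of Corollary \ref{cor:sigma-g} will follow from the intersection counts of Lemma \ref{lemma:handle-intersections} together with Kanda's identity $\tw(e',\Sigma_j) = -\frac{1}{2}\#(e' \cap \Gamma)$. Both checks localize near the surgered handles of $S_g$.

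For the nonisolating property I must verify that the dividing set $\Gamma$ of $\Sigma_j$ meets every component of $\Sigma_j \smallsetminus j(J)$. The faces disjoint from the surgered handles are unaffected by the surgery and meet $\Gamma$ as before, so the content lies near each handle. After surgering along the Legendrian meridians $m_1,m_2$, each capping compressing disk carries exactly one dividing arc, since $\#(m_e \cap \Gamma) = 2$. Feeding in the remaining counts $\#(c_2 \cap \Gamma) = 2k_e - 2$, $\#(c_1 \cap \Gamma) = 2k_e$, $\#(a_i \cap \Gamma) = 2$ and $\#(b_i \cap \Gamma) = 0$, I can trace $\Gamma$ through the region bounded by the arcs of $J_e$. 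The parallel edges $c'_2,c''_2$ (and $r'_i,r''_i$ when $\sigma(v_1,\Sigma_g) = -1$) are positioned precisely so as to cut this region into strips, each of which is then forced to contain an arc of $\Gamma$; verifying this in the four sign configurations of $(\sigma(v_1,\Sigma_g),\sigma(v_2,\Sigma_g))$ yields the nonisolating property.

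For the second assertion, observe that $\Sigma_j$ is a $2$-sphere, so Corollary \ref{cor:sigma-g} applies in its stated form. I would enumerate the cut edges and cut pairs of $J$ and compute their twisting from Kanda's identity. The parallel edges either form cut pairs or bound bigons with the arcs they parallel, and the definition of $J_e$ places them so that $\Gamma$ is disjoint from the relevant member of each such pair, forcing its twisting to vanish; combined with the counts of Lemma \ref{lemma:handle-intersections} this gives the required vanishing of $\tw$ along cut edges and along all but the first edge in each chain of common-boundary edges.

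The main obstacle is the local case analysis for the nonisolating condition, and especially the configuration $\sigma(v_1,\Sigma_g) = -1$, where the extra arcs $r_1,r_2,m_1,m_2$ and their parallel copies produce a considerably more intricate face decomposition. The crux is to confirm, in every sign configuration, that each capping disk and each strip between parallel edges inherits at least one arc of $\Gamma$, equivalently that no face is isolated from the dividing set -- exactly the property that the placement of parallel edges in the definition of $J_e$ is engineered to guarantee. Once this is verified, both conclusions of the lemma follow.
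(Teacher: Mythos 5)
Your treatment of the first assertion is sound and is essentially the paper's argument made explicit: the paper simply declares the nonisolating property ``clear from Figure~\ref{figure:Je},'' and your plan --- localize near each surgered handle, note that each capping disk carries exactly one dividing arc because $\#(m_i \cap \Gamma) = 2$, and use the counts of Lemma~\ref{lemma:handle-intersections} to trace $\Gamma$ through the faces cut out by the parallel edges --- is a legitimate (if more laborious) verification of the same fact.

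The second assertion is where you have a genuine misconception. You write that ``the parallel edges either form cut pairs or bound bigons with the arcs they parallel'' and then set out to \emph{compute} the twisting along cut edges and cut pairs. This inverts the actual role of the parallel edges: each arc ($c_2$, and $r_1,r_2$ when $\sigma(v_1,\Sigma_g) = -1$) is doubled on \emph{both} sides, producing a triple of parallel edges, and deleting any two edges of such a triple leaves the third, so no two of them form a cut pair, and none is a cut edge. This is precisely the point of the construction: $J$ is built so that it has no cut edges and no cut pairs of edges near the handles (and any cut edge inherited from $G$ away from the handles already has $\tw(e,\Sigma_g) = 0$, unchanged by the surgery). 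Consequently conditions (1) and (2) of Corollary~\ref{cor:sigma-g} are satisfied \emph{vacuously} --- there is nothing to compute. Your proposed mechanism (``$\Gamma$ is disjoint from the relevant member of each such pair, forcing its twisting to vanish'') is a correct application of Kanda's identity in itself, but it is being applied to pairs that are not cut pairs, and an honest enumeration following your plan would find no cut pairs at all, revealing that the computation you budgeted for does not arise. The fix is one sentence, the paper's own: the parallel edges eliminate all cut edges and cut pairs, so $j$ trivially satisfies the conclusions of Corollary~\ref{cor:sigma-g}.
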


\begin{proof}
To apply the Legendrian realization principle to $j(J)$, the dividing curve $\Gamma$ must intersect every face on $\Sigma_j$, which is clear from Figure \ref{figure:Je}.  In addition, the extra parallel edges are added so that $J$ has no cut edges or pair of cut edges.  Thus $j$ trivially satisfies the conclusions of Corollary \ref{cor:sigma-g}.
\end{proof}

Let $q: Q \rightarrow (S^3,\xi_{std})$ be the Legendrian graph whose image is the union of the image of $j(J)$ and the arcs $D_e$ for each positive edge $e$.  The image of $q$ contains the images of $g$ and $j$ and we can choose the Legendrian embeddings $g,j$ so that $g,j$ are restrictions of $q$.

\begin{figure}[htpb!]
\centering
\labellist
	\small\hair 2pt
	\pinlabel $B_+$ at 180 80
	\pinlabel $B_-$ at 100 140
	\pinlabel $a_+$ at 145 90
	\pinlabel $a_-$ at 145 140
	\pinlabel $\alpha$ at 120 70
	\pinlabel $\beta$ at 100 200
	\pinlabel $\Gamma$ at 30 50
\endlabellist
\includegraphics[width=.35\textwidth]{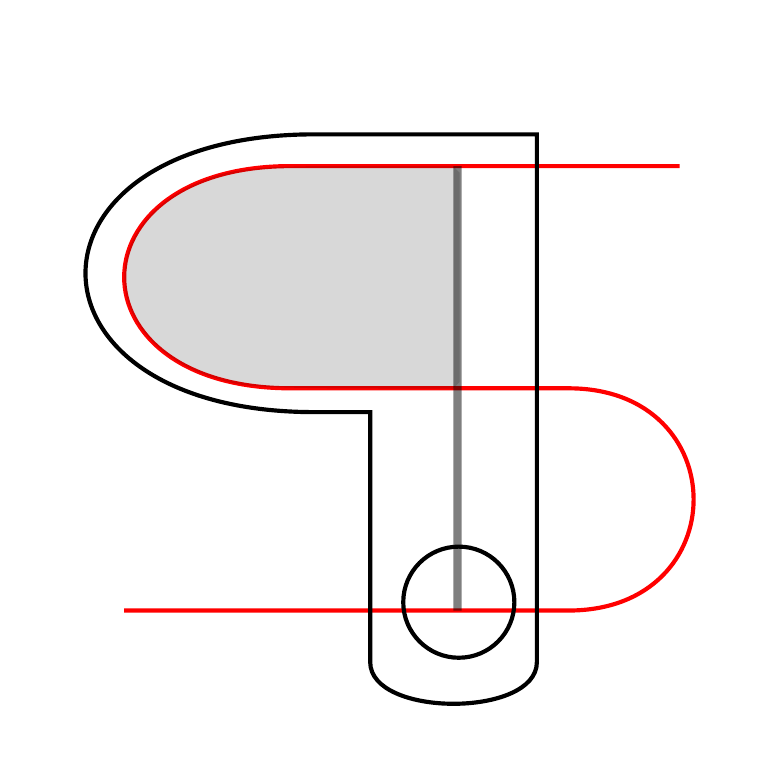}
\caption{The shaded bigon is on the negative side of the dividing curve.  The trivial bypass attached along the arc can be found by pushing the annulus bounded by the pair $\alpha,\beta$ off the sphere and perturbing it to be convex.}
\label{figure:right-2-life}
\end{figure}

\begin{lemma}
\label{lemma:trivial-bypasses-disjoint}
Suppose that $a$ is a trivial arc of attachment on $\Sigma_j$ in the complement of $j(J)$.  Then we can assume the bypass attached along $a$ is disjoint from $q(Q)$.
\end{lemma}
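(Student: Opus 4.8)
The plan is to reduce the statement to a general-position argument localized near the former handles, exploiting that $q(Q)$ differs from $j(J)$ only by the finitely many arcs $D_e$. First I would dispose of the easy part: since $a$ lies in the complement of $j(J)$ and the bypass half-disk $B$ meets $\Sigma_j$ only along $a$, while $j(J) \subset \Sigma_j$, the disk $B$ is automatically disjoint from $j(J)$. Hence it suffices to arrange $B \cap D_e = \emptyset$ for each positive edge $e$. Recall that the interior of each $D_e$ lies off $\Sigma_j$, inside the ball region left by surgering the corresponding handle, and that $D_e$ meets $\Sigma_j$ only at its endpoints $X_1,X_2$, which lie on the meridians $m_1,m_2 \subset j(J)$. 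In particular, since $a$ is disjoint from $j(J)$, it is disjoint from the points $X_i$.

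Next I would invoke the Right-to-Life realization of the trivial bypass, as in Figure \ref{figure:right-2-life}: the bypass disk is produced by pushing the annulus bounded by a pair of dividing curves $\alpha,\beta$ adjacent to $a$ off the sphere and perturbing it to be convex. Thus $B$ can be taken inside a thin collar $\Sigma_j \times (-\epsilon,\epsilon)$, hovering over a neighborhood $U \subset \Sigma_j$ of $a$ together with the relevant arcs of $\Gamma$. Because each $D_e$ approaches $\Sigma_j$ transversally at its endpoint $X_i$ and is otherwise bounded away from $\Sigma_j$ by some fixed distance, for $\epsilon$ small the only way $B$ can meet $D_e$ is if $U$ contains one of the points $X_i$. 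I would therefore shrink $U$ to avoid all the $X_i$: the arc $a$ already avoids them, and since $X_i \notin \Gamma$, the dividing arcs bounding the pushed-off region can likewise be isotoped off a neighborhood of each $X_i$. Taking $U$ inside such a cleared region and $\epsilon$ sufficiently small yields $B \cap D_e = \emptyset$.

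The hard part will be the case in which $a$ lies on one of the compressing disks $E_i$ and the triviality of the bypass forces it onto the side of $\Sigma_j$ into which the handle was compressed --- the side containing $D_e$. Here one cannot simply push $B$ to the exterior, and one must control $B$ in the immediate vicinity of the strand of $D_e$ terminating at $X_i$. I expect this to be precisely what the doubled edges $c'_2,c''_2$ (and $r'_1,r''_1,r'_2,r''_2$ in the case $\sigma(v_1)$ negative) are designed for: they cut the face structure near the handle so that any trivial arc of attachment in the complement of $j(J)$ is confined to a face whose push-off stays within the thin collar and misses the transverse strand of $D_e$ at $X_i$. Combining this confinement with the transversality of $D_e$ to $\Sigma_j$ and the smallness of $\epsilon$ gives $B \cap q(Q) = \emptyset$, completing the argument.
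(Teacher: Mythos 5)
Your first two paragraphs are essentially the paper's proof. The paper likewise reduces to avoiding the arcs $\overline{q(Q) \smallsetminus j(J)}$ (the $D_e$'s), notes that their boundary points are positive elliptic singularities lying on $j(J)$ and hence away from $a$, and realizes the trivial bypass inside the annulus $A$ cobounded by the curves $\alpha,\beta$ of Figure \ref{figure:right-2-life}, pushed vertically off $\Sigma_j$ and then convex-realized; since the footprint of $A$ avoids the endpoints $X_i$, and $D_e$ meets $\Sigma_j$ only at those endpoints, a sufficiently small push makes $A$ --- and hence the bypass it contains --- disjoint from $q(Q)$, even though $A$ may still intersect the surface $S_g$. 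The one structural difference is that the paper first disposes of bypasses attached behind $\Sigma_j$ outright, since all of $q(Q) \smallsetminus j(J)$ lies on the front side, and runs the push-off argument only for front attachments; your collar formulation covers both sides at once.

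Your third paragraph, however, is chasing a phantom, and as written it leaves the proof formally incomplete (``I expect this to be precisely what the doubled edges \dots are designed for'' is a conjecture, not an argument). If $a$ lies on a compressing disk and the bypass is forced to the front side, your own argument from the second paragraph applies verbatim: $D_e$ is a compact arc meeting $\Sigma_j$ only at $X_1, X_2 \in j(J)$, so $D_e$ minus small neighborhoods of its endpoints is at positive distance from $\Sigma_j$, and shrinking $U$ and $\epsilon$ suffices no matter which face of $\Sigma_j \smallsetminus j(J)$ contains $a$. There is no residual hard case. Moreover, the parallel edges $c'_2, c''_2, r'_i, r''_i$ have nothing to do with confining bypass disks: in the paper they are added so that the graph $J$ has no cut edges or cut pairs of edges, which is what guarantees that $\Sigma_j$ satisfies the conclusions of Corollary \ref{cor:sigma-g} (see the lemma immediately following the definition of $J_e$). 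Deleting the speculative paragraph and observing that your transversality argument is uniform in the position of $a$ yields a complete proof matching the paper's.
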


\begin{proof}
First, the complement of $j(J)$ in $q(Q)$ lies on the front side of $\Sigma_j$, so if the bypass is attached behind it is clearly disjoint from $Q$.

Now, suppose that $a$ is an attachment arc for a trivial bypass in front.  By the following careful analysis of the proof of the Right-to-Life principle, we can ensure that the trivial bypass is disjoint from $q(Q)$.

The arc $a$ and dividing set $\Gamma$ bound two bigons $B_+$ and $B_-$, one positive and one negative.  Let $a_{\pm}$ be the segment of $a$ in the boundary of $B_{\pm}$.  Let $\beta$ be the boundary of a neighborhood of $B_- \cup a$ and let $\alpha$ be the boundary of a neighborhood of the endpoint of $a$ away from $B_-$ and contained in $\beta$.  See Figure \ref{figure:right-2-life}.  The curves $\alpha,\beta$ cobound an annulus $A$.

All of the points of $\del \overline{ \left( q(Q) \smallsetminus j(J) \right)}$ are positive singularities of the characteristic foliation.  Moreover, we can choose $\alpha,\beta$ so that $A$ does not contain any of these singularities.  Now push $\beta$ and $A$ vertically into the front side of $\Sigma_j$.  The annulus $A$ may intersect the surface $S_g$ but we can assume that it is disjoint from the graph $q(Q)$.

Convex realize $A$.  The trivial bypass along $a$ is contained in the annulus $A$, which is disjoint from $q(Q)$.
\end{proof}

\subsubsection{Isotopy}

Let $g_1,g_2$ be Legendrian embeddings of $G$ with the same invariants $R_g,\rot_g$.  Let $q_1,j_1$ and $q_2,j_2$ be the corresponding Legendrian graphs defined in the previous Subsubsection.  We will show that $g_1,g_2$ are Legendrian isotopic.

\begin{lemma}
\label{lemma:same-invts}
Let $g_1,g_2$ be Legendrian embeddings of $G$ with the same invariants $R_g,\rot_g$.  Then
\begin{enumerate}
\item the abstract graphs $Q_1,Q_2$ and $J_1,J_2$ are identical,
\item the graphs $j_1,j_2$ have the same invariants $R_j,\rot_j$, 
\item the graphs $q_1,q_2$ have the same invariants $R_q,\rot_q$, and
\item there exists sequences of bypasses on $\Sigma_{j_1}$ and $\Sigma_{j_2}$ such that, after attaching the bypasses, the dividing sets on $\Sigma_{j_1},\Sigma_{j_2}$ are isotopic rel $j_1(J),j_2(J)$.
\end{enumerate}
\end{lemma}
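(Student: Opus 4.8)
The plan is to prove the four statements in order: the first is combinatorial bookkeeping, the second and third track how the ribbon and rotation invariants propagate through the construction, and the fourth is a direct appeal to Proposition \ref{prop:complete}. For Statement (1), I would observe that the recipe building $J_e$ and $Q$ out of $G$ uses only two pieces of data: (a) the set of positive edges together with the integers $k_e = \lceil \tw(e,\Sigma_g)\rceil$, and (b) the vertex signs $\sigma(v,\Sigma_g)$. The vertex signs are determined by $R_g$, as recorded in Subsection \ref{sub:base}. The twisting $\tw(e,\Sigma_g)$ is the difference along $e$ between the contact framing, which $R_g$ records, and the surface framing of the pair $(\Sigma_g,g(G))$, whose diffeomorphism type is fixed by $R_g$ through Lemma \ref{lemma:ribbon-embedding} and the normalization of Corollary \ref{cor:sigma-g}; hence the positive edges and the $k_e$ are invariants of $R_g$. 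Since $g_1,g_2$ share $R_g$, applying the construction to each produces literally the same abstract graphs, so $Q_1 = Q_2$ and $J_1 = J_2$.

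For Statement (2), I would compute $R_j$ and $\rot_j$ directly from the dividing-set data on the convex sphere $\Sigma_j$. The ribbon $R_j$ is determined by the graph $J$ --- equal for both embeddings by (1) --- together with the intersection numbers $\#(j(e)\cap\Gamma)$ and the local configuration of $\Gamma$ near $j(J)$; Lemma \ref{lemma:handle-intersections} fixes these in the handle region, and elsewhere they are inherited from $R_g$, so $R_{j_1} = R_{j_2}$. For the rotation invariant I would use that on a convex sphere $\rot(\gamma) = \chi(D^+) - \chi(D^-)$ depends only on how $\Gamma$ cuts the disk $D$ bounded by $\gamma$. The cycles running through a handle are pinned down by Lemma \ref{lemma:unknot-longitude}, each being an unknot with $\tb = -1$ and $\rot = 0$, while every other cycle reduces to a cycle of $G$ and inherits its rotation number from $\rot_g$. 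As this data agrees for $g_1$ and $g_2$, we obtain $\rot_{j_1} = \rot_{j_2}$.

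Statement (3) then follows because $q$ is built from $j$ by adjoining the arcs $D_e$ according to data already fixed by $R_g$ (by (1)), and because $g$ is a restriction of $q$; every cycle of $Q$ therefore lies in $J$, restricts to a cycle of $G$, or traverses a handle where Lemma \ref{lemma:unknot-longitude} applies, so $R_q$ and $\rot_q$ are functions of $R_g,\rot_g$ and agree for $q_1,q_2$. Statement (4) is then immediate: by (1) the maps $j_1,j_2$ are topologically trivial Legendrian embeddings of the common graph $J$ lying on the convex spheres $\Sigma_{j_1},\Sigma_{j_2}$, and by (2) they carry the same pair $R_j,\rot_j$, so Statement (2) of Proposition \ref{prop:complete} supplies sequences of bypasses after which the dividing sets on $\Sigma_{j_1},\Sigma_{j_2}$ are isotopic rel $j_1(J),j_2(J)$.

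The step I expect to be most delicate is Statement (2). The handle contributes several new families of cycles --- those using $a_i$ rather than $b_i$, those through the meridians $m_1,m_2$, and those following the longitudinal arcs $c_1,c_2$ --- and one must verify that the rotation-number computation via $\chi(D^+) - \chi(D^-)$ is consistent across all of them and matches the value forced by $\rot_g$. This consistency is precisely what Lemmas \ref{lemma:handle-intersections} and \ref{lemma:unknot-longitude} are engineered to guarantee, so the real content of the argument is organizing the bookkeeping so that these two lemmas account for every cycle of $J$ and $Q$.
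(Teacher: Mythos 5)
Your overall architecture matches the paper's: Part (1) from the fact that $R_g$ determines the vertex signs and the edge twisting $\tw(e,\Sigma_g) = -\frac{1}{2}\#(g(e)\cap\Gamma)$, hence the set of positive edges and the integers $k_e$; the ribbon half of Part (2) from a tubular-neighborhood-plus-twisting description of $R_j$; and Part (4) exactly as the paper does it, from Part (2) together with Proposition \ref{prop:complete}. The genuine gap is in the rotation half of Statements (2) and (3). You claim the cycles through a handle are ``pinned down by Lemma \ref{lemma:unknot-longitude}, each being an unknot with $\tb=-1$ and $\rot=0$.'' But the cycles in Lemma \ref{lemma:unknot-longitude} have the form $d_k \cup f_1 \cup c_1 \cup f_2$ and contain the longitudinal arc $c_1$, which is an edge of neither $J$ nor $Q$: the construction of $J_e$ includes $c_2$ and its parallel copies but not $c_1$, and $Q$ adds only the arcs $D_e$. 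So that lemma says nothing about any cycle of $J$ or $Q$; its actual role comes later, in the proof of Theorem \ref{thrm:complete}, where it feeds Lemma \ref{lemma:isotopic-arcs} to extend the isotopy over $g_i(D_e)$. Moreover the blanket claim $\rot = 0$ is false for cycles inside $J_e$: a cycle made of two parallel edges between $y_1$ and $y_2$ (e.g.\ $c_2$ and a parallel copy) has rotation number determined by the signs of $y_1,y_2$, which need not vanish --- the paper records exactly this in its first case.

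The missing mechanism is Lemma \ref{lemma:d-arc-replace}. For a cycle $\gamma$ of $J$ (or $Q$) crossing $J_e$ along a path $p_e$ from $v_1$ to $v_2$, the paper replaces each $p_e$ by the edge $e$ to form a cycle $\widehat{\gamma}$ of $G$, and then invokes Lemma \ref{lemma:d-arc-replace} to see that $p_e$ is obtained from $e$ by $k_e$ positive and $k_e$ negative stabilizations, with each substitution of $a_i$ for $b_i$ contributing a further stabilization of sign $-\sigma(v_1,\Sigma_g)$. Hence $\rot_j(\gamma)$ equals $\rot_g(\widehat{\gamma})$ plus a correction computed entirely from $k_e$ and the vertex signs --- data which, by Part (1), agree for $g_1$ and $g_2$, so $\rot_{j_1} = \rot_{j_2}$ and likewise $\rot_{q_1} = \rot_{q_2}$. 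Your phrase ``inherits its rotation number from $\rot_g$'' asserts this conclusion without supplying the mechanism, and the lemma you cite in its place cannot do that job. With Lemma \ref{lemma:d-arc-replace} substituted for Lemma \ref{lemma:unknot-longitude} in your case analysis, and the cycles contained in a single $J_e$ handled via the vertex signs rather than a blanket $\rot = 0$, your argument becomes essentially the paper's.
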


\begin{proof}
The embeddings $g_1,g_2$ have the same Legendrian ribbon, so they have the same edge invariant $\tw(e,\Sigma_g)$, the same set of positive edges $p(g)$, and each vertex has the same sign $\sigma(v,\Sigma_g)$ for both embeddings.  For each positive edge $e$ connecting a pair of vertices $v_1,v_2$, the graph $J_e$ is determined by $\tw(e,\Sigma_g), \sigma(v_1,\Sigma_g)$ and $\sigma(v_2,\Sigma_g)$.  This proves Part (1).

The Legendrian ribbon $R_j$ can be obtained from a tubular neighborhood of $j(J)$ in $\Sigma_j$ by adding $\tw(e,\Sigma_j)$ twists along each edge $e$.  Since the image of $j(J_e)$ in $\Sigma_j$ is determined by $\tw(e,\Sigma_g), \sigma(v_1,\Sigma_g)$ as well, the graphs $j_1,j_2$ have the same Legendrian ribbon.  We can also obtain $R_q$ as the union of $R_g$ and $R_j$ and so $q_1,q_2$ have the same Legendrian ribbon.

For each cycle $\gamma$ of $J$ and each positive edge $e$, either (1) $\gamma$ is contained in $J_e$, (2) $\gamma$ is disjoint from $J_e$, or (3) $\gamma$ contains a path $p_e$ in $J_e$ from $v_1$ to $v_2$.  In the first case, it is clear from Figure \ref{figure:Je} that either $\rot_j(\gamma) = 0$, or if $\gamma$ is made of two edges between $y_1$ and $y_2$, then $\rot(\gamma)$ depends only on the signs of the vertices $y_1$ and $y_2$.  

Now suppose that $\gamma$ contains paths $\{p_{e_i}\}$ in $\{J_{e_i}\}$ for some collection of positive edges $\{e_i\}$.  We can replace $\gamma$ with a cycle $\widehat{\gamma}$ in $G$ by replacing each path $p_{e_i}$ with the edge $e_i$.  If $p_{e_i} = b_1 \cup c_2 \cup b_2$, then Lemma \ref{lemma:d-arc-replace} implies that $p_{e_i}$ is obtained from $e_i$ by $k_{e_i}$ positive and negative stabilizations.  Replacing the arc $b_1$ with $a_1$ or $b_2$ with $a_2$ is equivalent to a stabilization of sign $-\sigma(v_1,\Sigma_g)$.  Therefore, it follows that the classical invariants of $\widehat{\gamma}$ determine the invariants of $\gamma$.

The same argument holds for any cycle in $Q$.  Thus, we have proved Parts (2) and (3).  Finally, Part (4) follows from Part (2) and Proposition \ref{prop:complete}.
\end{proof}

From the classification of Legendrian unknots, we can deduce that

\begin{lemma}
\label{lemma:isotopic-arcs}
Let $(B^3,\xi_{std})$ be a tight contact 3-ball with convex boundary $S$. Let $a$ be an arc in the characteristic foliation of $S$ connecting two singularities $e_1,e_2$.  Furthermore, let $b_1,b_2: [0,1] \rightarrow B^3$ are Legendrian arcs such that
\begin{enumerate}
\item $b_i(0) = e_1$ and $b_i(1) = e_2$
\item $b_i(t)$ lies in the interior of $B^3$ for $t \in (0,1)$
\item after smoothing, the Legendrian unknots $K_1 = a \cup b_1$ and $K_2 = a \cup b_2$ have $\tb(K_1) = \tb(K_2) = -1$ (and thus $K_1,K_2$ are Legendrian isotopic in $B^3$).
\end{enumerate} 
Then the arcs $b_1,b_2$ are Legendrian isotopic rel boundary.
\end{lemma}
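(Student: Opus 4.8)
The plan is to realize each arc as the interior edge of a convex Seifert disk, standardize the two disks so that they agree near $a$, and then invoke the rigidity packaged in Proposition~\ref{prop:dividing-equals-isotopy} to build a contactomorphism of $B^3$ that fixes $a$ and carries $b_1$ to $b_2$. First I would use that $\tb=-1$ is the maximal Thurston--Bennequin number of the unknot, so hypothesis (3) forces $\rot(K_1)=\rot(K_2)=0$ and each $K_i=a\cup b_i$ bounds an embedded disk $\Delta_i\subset B^3$ with $\del\Delta_i=K_i$, whose interior lies in the interior of $B^3$ and which meets $\del B^3=S$ exactly along $a$. After a $C^0$-small perturbation rel boundary I would make each $\Delta_i$ convex. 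Since the disk realizes the nullhomologous framing, $\tw(\del\Delta_i,\Delta_i)=\tb(K_i)=-1$, so by $\tw(C,\Sigma)=-\tfrac12\#(C\cap\Gamma)$ the dividing set $\Gamma_{\Delta_i}$ is a single boundary-parallel arc. Thus $(\Delta_i,\Gamma_{\Delta_i})$ is unique up to diffeomorphism, and there is a diffeomorphism $i\colon\Delta_1\to\Delta_2$ carrying $\Gamma_{\Delta_1}$ to $\Gamma_{\Delta_2}$, fixing $a$, and sending $b_1$ to $b_2$.

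Next I would make the two disks coincide on a neighborhood of $a$. Because $a$ lies in the characteristic foliation $\cF$ of the convex surface $S$ and lies on the boundary of both disks, Giroux's Flexibility Theorem lets me arrange $i$ to be the identity on a collar of $a$ and lets me isotope $\Delta_2$, fixing $\Gamma_{\Delta_2}$, so that its characteristic foliation equals the image under $i$ of that of $\Delta_1$ and so that $\Delta_1,\Delta_2$ literally agree on a neighborhood $\nu(a)$. Following the proof of Proposition~\ref{prop:dividing-equals-isotopy}, the matched foliations then yield a contactomorphism $j\colon\nu(\Delta_1)\to\nu(\Delta_2)$ of disk neighborhoods that is the identity near $a$ and sends $b_1$ to $b_2$.

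Finally I would globalize. Cutting $B^3$ open along $\Delta_i$ produces a tight $3$-ball $B_i'$ whose convex boundary is assembled from two copies of $\Delta_i$ together with $S\smallsetminus\nu(a)$. The map $j$ on the two disk copies, together with the identity on $S\smallsetminus\nu(a)$, defines a boundary diffeomorphism $\del B_1'\to\del B_2'$ preserving characteristic foliations; the compatibility along the seam $a$ is exactly what the second step arranged. By Eliashberg's uniqueness of tight contact structures on the ball with prescribed boundary foliation \cite{Eliashberg92}, the same input used in Proposition~\ref{prop:dividing-equals-isotopy}, this boundary map extends over $B_i'$, and regluing yields a contactomorphism of $B^3$ that fixes $S\smallsetminus\nu(a)$, fixes $a$, and carries $b_1$ to $b_2$. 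The weak homotopy equivalence $\text{Diff}_{\xi_{std}}\hookrightarrow\text{Diff}_0(S^3)$ then upgrades this contactomorphism to a contact isotopy, producing the desired Legendrian isotopy of $b_1$ to $b_2$ rel boundary.

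I expect the main obstacle to be the seam: arranging that the disk contactomorphism $j$ can be taken to be the identity along $a$, so that it patches with the identity on $S\smallsetminus\nu(a)$ into a single boundary diffeomorphism with matching characteristic foliations. Once that compatibility is secured, Eliashberg's uniqueness on the cut-open ball does all the remaining work.
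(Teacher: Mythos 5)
Your argument is correct in outline, but it takes a genuinely different route from the paper: the paper offers no written proof at all, deducing the lemma in one line from the Eliashberg--Fraser classification of Legendrian unknots, whereas you re-prove the needed \emph{relative} statement from scratch by the convex-surface scheme (standard Seifert disks with a single dividing arc, cut along the disk, Eliashberg's uniqueness on the tight ball, upgrade to an isotopy). This is a worthwhile trade: the citation route is short but quietly glosses over the real content of the lemma, namely that an ambient Legendrian isotopy of the closed-up unknots $K_1,K_2$ can be traded for one fixing the arc $a$ pointwise; your direct argument confronts exactly that issue, and you correctly identify the seam along $a$ as the crux. Two refinements would tighten your write-up. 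First, before convexifying, you must normalize the collars of $\Delta_1,\Delta_2$ along $a$ to agree (the collar framing along $a$ is only a $\ZZ$-torsor of choices, compensated inside $\tw(b_i,\Delta_i)$ since only the total $\tw(K_i,\Delta_i)=-1$ is fixed); once the collars agree, $\#\bigl(a\cap\Gamma_{\Delta_1}\bigr)=\#\bigl(a\cap\Gamma_{\Delta_2}\bigr)$, which is what guarantees your diffeomorphism $i$ can match the two dividing arcs while fixing $a$ --- without this normalization the two endpoints of the dividing arc could sit on $a$ for one disk and on $b_i$ for the other, and no such $i$ exists. Second, your final upgrade cites the weak homotopy equivalence $\text{Diff}_{\xi_{std}}\hookrightarrow\text{Diff}_0(S^3)$, which is a statement about $S^3$; since you need a contact isotopy of $B^3$ \emph{rel boundary}, the correct input is Eliashberg's companion result for the tight $3$-ball with fixed boundary data (connectivity of the contactomorphism group rel $\del B^3$, equivalently contractibility of the space of tight structures with prescribed boundary foliation), which is also in \cite{Eliashberg92}; with that substitution your regluing step closes the argument.
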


We can now prove the main theorem.

\begin{customthm}{\ref{thrm:complete}}
Let $G$ be an abstract planar graph.  The pair $(R_g,\rot_g)$ is a complete set of invariants of topologically trivial Legendrian embeddings $g: G \rightarrow S^3$.
\end{customthm}

\begin{proof}
Let $g_1,g_2$ be two planar graphs with the same invariants $R_g,\rot_g$.  Let $q_1,j_1$ and $q_2,j_2$ be the corresponding Legendrian graphs defined in Subsubsection \ref{subsub:qj}.  By Lemma \ref{lemma:same-invts}, the graphs $j_1,j_2$ have the same invariants $R_j,\rot_j$ and there are sequences of bypasses on $\Sigma_{j_1},\Sigma_{j_2}$ equating the dividing curves.  By Lemma \ref{lemma:trivial-bypasses-disjoint}, these bypasses can be attached in the complement of $q_1(Q)$ and $q_2(Q)$.  Thus, by Proposition \ref{prop:dividing-equals-isotopy} there is a Legendrian isotopy $h$ such that $h \circ j_1 = j_2$.  Finally, for each positive edge $e$ we can isotope $g_1(D_e)$ and $g_2(D_e)$ to lie in a 3-ball disjoint from $g_1(G \smallsetminus e)$ and $g_2(G \smallsetminus e)$.  Then by combining Lemma \ref{lemma:isotopic-arcs} with Lemma \ref{lemma:unknot-longitude}, we can assume that $h$ extends to an isotopy sending $g_1(D_e)$ to $g_2(D_e)$.  Repeating for each positive edge, we find that $h$ extends to an isotopy between $q_1,q_2$ and thus by restriction, $h \circ g_1 = g_2$.
\end{proof}





\section{Legendrian simplicity}
\label{sec:simple}

In this final section, we use Theorem \ref{thrm:complete} to prove Theorem \ref{thrm:simple}.  Recall that $\Delta_2$ denotes graph with 3 vertices and 2 edges connecting each pair of vertices, and $K_4$ is the complete graph on four vertices.

\begin{customthm}{\ref{thrm:simple}}
Let $G$ be an abstract planar graph and $g: G \rightarrow S^3$ its unique topologically trivial embedding.
\begin{enumerate}
\item If $G$ contains $K_4$ or $\Delta_2$ as a minor, then the pair $(\overline{R}_g,\rot_g)$ is a complete set of invariants.
\item If $G$ is 3-connected, the pair $(\tb_g,\rot_g)$ is a complete set of invariants.
\end{enumerate}
In both cases, $G$ is Legendrian simple.

\end{customthm}

\begin{proof}
First, suppose that $G$ contains either $K_4$ or $\Delta_2$ as a minor.   
Let $g$ be a Legendrian embedding and let $\Sigma_g$ be a sphere containing $g(G)$.  
Pick 3 vertices $v_1,v_2,v_3 \subset G$ that would survive to be distinct vertices in $K_4$ or $\Delta_2$ after contracting edges or deleting vertices.  
By the Pigeonhole Principle, we can assume that, up to reindexing, the signs at $v_1,v_2$ satisfy $\sigma(v_1,\Sigma_g) = \sigma(v_2,\Sigma_g)$.  We can choose 3 oriented paths $p_1,p_2,p_3$ from $v_1$ to $v_2$ and obtain a  corresponding set $C$ of 3 cycles $C_1 = p_1 - p_2; C_2 = p_2 - p_3; C_3 = p_3 - p_1$ as in Subsection \ref{sub:rotation}.  
Since $\sigma(v_1,\Sigma_g) = \sigma(v_2,\Sigma_g)$, the cyclic ordering of $p_1,p_2,p_3$ at the vertices must be opposite one another.  
So Lemma \ref{lemma:total-rot-theta} implies that $\rot_g(C) = 1$.

Now suppose that $h$ is a Legendrian embedding of $G$ with $\rot_h = \rot_g$ and $R_h = -R_g$, i.e. the surface $\overline{R}_g$ with the oppose orientation as $R_g$.  Let $p_1,p_2,p_3$ be the same oriented paths.  However, since $\sigma(v_1,\Sigma_h) = - \sigma(v_1,\Sigma_g)$ and $\sigma(v_2,\Sigma_h) = -\sigma(v_2,\Sigma_g)$, the corresponding fundamental set of cycles $C'$ consists of $C'_1 = p_1 - p_3; C'_2 = p_3 - p_2; C'_3 =p_2 - p_1$.  In other words, $C'$ consists of the same cycles as $C$, but with opposite orientations.  Consequently, $\rot_h(C) = -\rot_h(C') = -1$ and therefore $\rot_h \neq \rot_g$, which is a contradiction.  Thus, if $h$ has contact framing $\overline{R}_h = \overline{R}_g$ and $\rot_h = \rot_g$ then $R_h = R_g$ and by Theorem \ref{thrm:complete}, $h$ is Legendrian isotopic to $g$.

Secondly, suppose that $G$ is 3-connected and let $g$ be a Legendrian embedding of $G$.  Let $\Sigma$ be a smooth 2-sphere containing $g(G)$.  By Whitney's Theorem, the embedding $g: G \rightarrow \Sigma$ is unique up to homeomorphism of $S^2$.  We can obtain the contact framing $\overline{R}_g$ from a tubular neighborhood $R$ of $g(G)$ in $\Sigma$ as follows.  Let $e$ be an edge of $G$, oriented from $v_1$ to $v_2$.  Since $G$ is 3-connected, we can choose two paths $p_1,p_2$ from $v_1$ to $v_2$ that are disjoint from $e$ and vertex-independent from each other.  Set $C_1 = e - p_1; C_2 = p_2 - e;$ and $C_3 = p_1- p_2$.  Then
\begin{align*}
\tw(e,\Sigma_g) &= \frac{1}{2} \left(\tw(e,\Sigma_g) + \tw(p_1,\Sigma_g) + \tw(p_2,\Sigma_g) + \tw(e,\Sigma_g) - \tw(p_1,\Sigma_g) - \tw(p_2,\Sigma_g) \right) \\
& = \frac{1}{2} (\tb(C_1) + \tb(C_2) - \tb(C_3))
\end{align*}
and so the cycle invariant determines $\tw(e,\Sigma_g)$.  Now add $\tw(e,\Sigma_g)$ twists to $R$.  Repeat for each edge to obtain $R_g$. Thus, the cycle invariant $\tb_g$ determines the contact framing $\overline{R}_g$.

To finish the proof, note that if $G$ is 3-connected then it must contain either $K_4$ or $\Delta_2$ as a minor.  Consequently, by Part (1) the rotation invariant $\rot_g$ determines the orientation on $R_g$.

\end{proof}


\bibliographystyle{alpha}
\nocite{*}
\bibliography{References}

\end{document}